\documentclass{article}

\usepackage{graphicx}
\usepackage[left=3cm, top=2cm, right=3cm, bottom=3cm]{geometry}
\usepackage{listings}
\usepackage{float}
\usepackage{amssymb,amsmath,mathtools}
\usepackage{enumerate}
\usepackage[numbers,square,sort]{natbib}
\usepackage{wrapfig,caption}
\usepackage{framed}
\usepackage{tabstackengine}
\usepackage{braket}
\usepackage[nottoc]{tocbibind}\settocbibname{References}
\usepackage{mathrsfs}

\usepackage{indentfirst}
\usepackage{setspace}
\usepackage{bm}
\usepackage{authblk}

\usepackage{tikz}
\tikzset{every loop/.style={}}
\usetikzlibrary{arrows,shapes,decorations.markings,automata,backgrounds,petri,bending,calc}

\usepackage{tikz-cd} 
\tikzset{
    labl/.style={anchor=south, rotate=90, inner sep=.5mm}
}

\usepackage{enumitem}
\setlist[enumerate,1]{label=(\arabic*)}
\newlist{steplist}{enumerate}{1}
\setlist[steplist]{label={Step \arabic*:}, ref={Step \arabic*}}

\usepackage[utf8]{inputenc}
\usepackage[english]{babel} 

\usepackage{amsthm}
\newtheorem{thm}{Theorem}[section]
\newtheorem{lem}[thm]{Lemma}
\newtheorem{prop}[thm]{Proposition}
\newtheorem{cor}[thm]{Corollary}
\numberwithin{equation}{section}

\newtheorem*{resfin}{Theorem \ref{mainresult}}

\theoremstyle{definition}
\newtheorem{exmp}[thm]{Example} 
\newtheorem{rem}[thm]{Remark}



\newcommand{\N}{\mathbb{N}}
\newcommand{\Q}{\mathbb{Q}}

\newcommand{\Z}{\mathbb{Z}}

\newcommand{\Comm}{\operatorname{Comm}}
\newcommand{\GL}{\operatorname{GL}}
\newcommand{\acts}{\curvearrowright}
\newcommand{\tr}{\operatorname{tr}}

\makeatletter
\newcommand{\subjclass}[2][1991]{%
  \let\@oldtitle\@title%
  \gdef\@title{\@oldtitle\footnotetext{#1 \emph{Mathematics subject classification.} #2}}%
}
\newcommand{\keywords}[1]{%
  \let\@@oldtitle\@title%
  \gdef\@title{\@@oldtitle\footnotetext{\emph{Key words and phrases.} #1.}}%
}
\makeatother


\newcommand\freefootnote[1]{%
  \let\thefootnote\relax%
  \footnotetext{#1}%
  \let\thefootnote\svthefootnote%
}

\begin{document}
\title{Separability properties of higher-rank GBS groups}
\date{}
\author[$\dagger$]{Jone Lopez de Gamiz Zearra}
\author[$\star$]{Sam Shepherd}

\affil[$\dagger$]{Facultad de Economía y Empresa (Bilbao--Sarriko)\\
Universidad del Pa\'is Vasco EHU/UPV\\
Avenida Lehendakari Agirre 83, 48015 Bilbao, Spain}
\affil[$\star$]{Institut für Mathematische Logik und Grundlagenforschung\\ 
Fachbereich Mathematik und Informatik, Universität Münster\\
Einsteinstraße 62, 48149 Münster, Germany}

\maketitle
\begin{abstract}
A rank $n$ generalized Baumslag--Solitar group is a group that splits as a finite graph of groups such that all vertex and edge groups are isomorphic to $\mathbb{Z}^n$.
In this paper we classify these groups in terms of their separability properties.
Specifically, we determine when they are residually finite, subgroup separable and cyclic subgroup separable.



\end{abstract}
\freefootnote{MSC 2020:	20F65 (Primary) 20E26, 20E08 (Secondary)}
\section{Introduction and statement of results}\label{Introduction and statement of results}

If $G$ is a group, the \emph{profinite topology} $\mathcal{PT}(G)$ on $G$ is the topology on the underlying set of the group whose basic open sets are cosets of normal subgroups of finite index. This topology is Hausdorff if and only if the intersection of all finite-index normal subgroups is trivial, and in this case $G$ is said to be \emph{residually finite}. We say that a subgroup $H<  G$ is \emph{separable} in $G$ if $H$ is closed in $\mathcal{PT}(G)$, or equivalently, if $H$ is the intersection of finite-index subgroups (not necessarily normal) of $G$. The group $G$ is called \emph{subgroup separable} if every finitely generated subgroup of $G$ is separable, and \emph{cyclic subgroup separable} if every cyclic subgroup is separable. Evidently, a subgroup separable group is cyclic subgroup separable, and cyclic subgroup separability further implies residual finiteness. 

Residual properties are linked with the decidability of algorithmic problems. More concretely, finitely generated residually finite groups have decidable word problem, and finitely generated subgroup separable groups have decidable membership problem (also known as the generalized word problem).

Baumslag--Solitar groups $BS(m,n)= \langle a,t \mid t a^m t^{-1}=a^n\rangle$ were introduced by Baumslag--Solitar \cite{BaumslagSolitar} as simple examples of non-Hopfian groups and they have been extensively studied due to their exotic properties. In particular, the groups $BS(m,n)$ were completely characterized as far as residual finiteness is concerned in \cite{CollinsLevin} and \cite{Meskin}.

These groups are HNN extensions of $\mathbb{Z}$, so one may think of two natural ways of generalizing this class of groups. On the one hand, one may consider HNN extensions of the form $\langle K, t \mid t^{-1}At= B, \varphi \rangle$ where $K\cong \mathbb{Z}^n$, $A$ and $B$ have finite index in $K$ and $\varphi$ is the associated isomorphism $\varphi \colon A \to B$. Residual finiteness and subgroup separability for these groups have been studied in a series of papers (see, for instance, \cite{Shirvani} and \cite{AndreadakisRaptisVarsos86}). Cyclic subgroup separability, however, has been classified only for HNN extensions that are not ascending (see, for example, \cite{ChoonWoan}). It is also worth pointing out that I.J. Leary and A. Minasyan constructed in \cite{LearyMinasyan21} the first examples of CAT(0) groups that are not biautomatic by using groups of this form.

On the other hand, another way of generalizing Baumslag--Solitar groups is considering fundamental groups of finite graphs of groups with all vertex and edge groups isomorphic to $\mathbb{Z}$, known as \emph{generalized Baumslag--Solitar (GBS groups}). These groups have been deeply studied in relation to a number of properties, including JSJ decompositions, quasi-isometries, automorphisms and commensurability (see for example \cite{Whyte01,Forester03,Levitt07,CasalRuizKazachkovZakharov21}). There are also classifications of GBS groups according to residual finiteness \cite{Levitt15} and other residual properties \cite{Sokolov21}.

In this paper we merge both definitions and we define higher-rank generalized Baumslag--Solitar groups. More precisely, we define a \emph{rank $n$ generalized Baumslag--Solitar group} (which we abbreviate to \emph{rank $n$ GBS group}) to be a group $G$ that splits as a finite graph of groups such that all vertex and edge groups are isomorphic to $\mathbb{Z}^n$, and we study the separability properties in this wider class of groups.
We note that higher-rank GBS groups were studied in recent work of Button in relation to virtual hierarchical hyperbolicity \cite{Button22}.

In Section \ref{ResFin} we classify residually finite rank $n$ GBS groups. We use immersions of graphs of groups to find simpler subgroups of rank $n$ GBS groups and deduce conditions on the indices of the edge groups in the respective vertex groups. In this way, we show:

\begin{resfin}
Let $G$ be a rank $n$ GBS group. Then $G$ is residually finite if and only if either
\begin{itemize}
    \item $G \cong \mathbb{Z}^n \ast_{\varphi}$ is an ascending HNN extension (i.e. $\varphi \colon \mathbb{Z}^n \to \Z^n $ is a monomorphism); or
    \item $G$ is virtually $\mathbb{Z}^n$-by-free.
\end{itemize}
\end{resfin}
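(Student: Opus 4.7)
The plan is to prove the two implications separately. The \emph{if} direction uses only well-established facts: an ascending HNN extension $\mathbb{Z}^n *_\varphi$ with injective $\varphi$ is residually finite, for instance by embedding it into a semidirect product $\mathbb{Z}[1/d]^n \rtimes \mathbb{Z}$ (where $d = |\det \varphi|$) and separating elements via reduction modulo primes coprime to $d$; a virtually $\mathbb{Z}^n$-by-free group is residually finite because any finite-index subgroup of the form $\mathbb{Z}^n \rtimes F$ with $F$ free has all congruence quotients $(\mathbb{Z}/N)^n \rtimes F$ residually finite (the action factors through the finite group $\GL_n(\mathbb{Z}/N)$, so the kernel $K_N\le F$ is free of finite index, and $(\mathbb{Z}/N)^n \rtimes K_N = (\mathbb{Z}/N)^n \times K_N$ is manifestly residually finite). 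Since finite extensions of residually finite groups are residually finite, $G$ itself is residually finite in both cases.

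The \emph{only if} direction is where the real work lies. Let $(\Gamma, \mathcal{G})$ be a splitting of $G$ with all vertex and edge groups $\cong \mathbb{Z}^n$. Each oriented edge $e$ gives monomorphisms $\alpha_e, \omega_e \colon \mathbb{Z}^n \hookrightarrow \mathbb{Z}^n$ of finite index, recorded after fixing bases as integer matrices with nonzero determinant. Following the strategy advertised in the introduction, the plan is to use immersions of graphs of groups $\mathcal{G}' \to \mathcal{G}$ to exhibit explicit subgroups of $G$. To each closed loop $\gamma$ in $\Gamma$ one attaches modular data --- the conjugacy class in $\GL_n(\mathbb{Q})$ of the product $\prod \omega_e \circ \alpha_e^{-1}$ around $\gamma$. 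The key intermediate claim is that if some loop has modular data not conjugate into $\GL_n(\mathbb{Z})$ --- say, with a rational eigenvalue $p/q$, $\gcd(p,q)=1$, $p, q > 1$ --- then, by restricting to a suitable rational invariant subspace and constructing an appropriate immersion, one produces a subgroup of $G$ isomorphic to the classical $\BS(p,q)$. Since $\BS(p,q)$ is not residually finite by Meskin's theorem, this contradicts the residual finiteness of $G$.

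Ruling out such obstructions across all loops forces a dichotomy on $(\Gamma, \mathcal{G})$. Either all modular data is conjugate into $\GL_n(\mathbb{Z})$, in which case there exists a common finite-index sublattice $L \le \mathbb{Z}^n$ preserved by every edge inclusion up to a common conjugation; promoting $L$ to a normal subgroup of a finite-index subgroup $G_0 \le G$, one finds that $G_0/L$ acts on the Bass--Serre tree with trivial edge stabilisers and hence is free, so $G_0$ is $\mathbb{Z}^n$-by-free and $G$ is virtually $\mathbb{Z}^n$-by-free. Or the modular data is \emph{one-sided}, with all nontrivial eigenvalues being honest integers rather than reciprocals, which forces $(\Gamma,\mathcal{G})$ to collapse to a single-vertex, single-loop splitting where one edge inclusion is an isomorphism --- giving an ascending HNN extension.

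The main obstacle will be converting the algebraic modular obstruction into a genuine immersion realising a $\BS(p,q)$ subgroup of $G$. A rational eigenvector in $\mathbb{Q}^n$ of a modular matrix need not correspond to an edge-inclusion-invariant primitive sublattice of $\mathbb{Z}^n$, so one has to pass to finite covers of $\Gamma$ and to sublattices of finite index to produce a one-dimensional subgraph of groups compatible with the immersion machinery. When the modular matrix has irrational or complex spectrum, a supplementary argument is needed --- perhaps passing to a finite cover that splits the minimal polynomial, or arguing directly in a higher-dimensional invariant subspace. The bookkeeping required to combine these matrix-theoretic moves with the graph-of-groups formalism is where I expect most of the technical difficulty to lie.
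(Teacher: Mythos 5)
Your ``if'' direction is fine (the paper simply quotes Hall and Baumslag for these two facts), but the ``only if'' direction rests on a case analysis that does not close up. The proposed mechanism --- loop monodromy not conjugate into $\GL_n(\Z)$ yields a rational eigenvalue $p/q$ with $p,q>1$, hence a $\BS(p,q)$ subgroup, hence non-residual-finiteness --- fails to cover the actual spectrum of cases. A strictly ascending HNN extension $\Z^n\ast_\varphi$ with $|\det\varphi|>1$ has loop monodromy that is \emph{not} conjugate into $\GL_n(\Z)$ and typically has no integer eigenvalues at all, yet it is residually finite; so ``not conjugate into $\GL_n(\Z)$'' cannot by itself be an obstruction, and your fallback branch (``all nontrivial eigenvalues are honest integers, forcing collapse to an ascending HNN extension'') excludes irrational spectra and is asserted without proof --- it is exactly the hard statement. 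Conversely, the single-loop group $\langle a,b,t\mid [a,b]=1,\ tat^{-1}=a^2,\ tb^3t^{-1}=b\rangle$ (labels $3$ and $2$, monodromy with eigenvalues $2$ and $1/3$) is not residually finite, but it has no rational eigenvalue $p/q$ with $p,q>1$, is not ``one-sided integer,'' and is not conjugate into $\GL_2(\Z)$: it falls through all three of your cases. Moreover $\BS(1,\lambda)$ subgroups arising from integer eigenvalues are themselves residually finite, so eigenvalue-by-eigenvalue reasoning cannot detect the failure here.

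What is missing are the two mechanisms the paper actually uses. First, a separability statement for amalgams: if $G$ is residually finite and contains $H=A_1\ast_\varphi A_2$ with $A_i'\lneq A_i$ of finite index, then some finite-index normal $\bar G\triangleleft G$ satisfies $\bar G\cap A_i<A_i'$ (proved via a commutator $[g,y]$ and normal forms in the amalgam). This converts residual finiteness into the existence of a finite cover of the labeled graph in which every non-loop edge has both labels $1$; from there a normal $\Z^n$ acting trivially on the tree appears and $G$ is virtually $\Z^n$-by-free. Second, to show that a $(1,q)$-loop with $q\neq1$ cannot coexist with any other edge in a reduced graph, the paper builds explicit immersions of small labeled graphs (rectangles and loop-edge-loop chains) into $\Gamma$ and derives a contradiction from an index-counting identity $\prod_j q_j/r_j=1$ around circular labeled graphs. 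Neither ingredient has a counterpart in your outline, and without them the dichotomy ``ascending HNN or virtually $\Z^n$-by-free'' does not follow from residual finiteness.
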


We then study subgroup separability in Section \ref{subsep}, and based on the work of Raptis--Varsos \cite{RaptisVarsos96} we conclude that $G$ is subgroup separable if and only if it is virtually $\mathbb{Z}^n$-by-free (see Theorem \ref{thm:subgpsep}).

We finally deal with the more difficult property of being cyclic subgroup separable in Section \ref{sec:cycsubsep}. Based on Theorem \ref{mainresult} and the fact that virtually $\mathbb{Z}^n$-by-free groups are subgroup separable, the remaining case to consider is when $G$ is an ascending HNN extension $\mathbb{Z}^n \ast_{\varphi}$.
Then $G$ is of the form
\[G=\langle A,t\mid tat^{-1}=\varphi(a), a\in A\rangle,\]
where $A\cong\Z^n$ and $\varphi\colon A\to A$ is a monomorphism.
Note that the image $\varphi(A)$ necessarily has finite index in $A$.

In this setting, it is not hard to check that $A$ is separable in $G$ if and only if $\varphi(A)=A$, and in this case $G$ is subgroup separable (Theorem \ref{thm:subgpsep}). In particular, in the case $A\cong\Z$ (when $G$ is a Baumslag--Solitar group), it follows that $G$ is cyclic subgroup separable if and only if $\varphi(A)=A$. When the rank of $A$ is greater than $1$, however, the study is more challenging and has been unresolved until now. 

We analyze the map $\varphi$ from the perspective of linear algebra. More concretely, given a basis of $A$, we write $\varphi$ as an integer matrix, and we study how properties of this matrix relate to properties of the group $G$.
In Lemma \ref{lem:eigenvalues'} we prove that if $\varphi$ has an integral eigenvalue $\lambda$ such that $|\lambda|>1$, then $G$ contains a $BS(1,\lambda)$ subgroup, so in particular $G$ is not cyclic subgroup separable.
However, the converse does not hold.

To obtain a complete characterization of when $G$ is cyclic subgroup separable, we work with certain $\varphi$-invariant subgroups of $A$ which correspond to the factorization $f_\varphi(x)=f_1(x)f_2(x)\dots f_l(x)$ of the characteristic polynomial $f_\varphi(x)$ into monic integer polynomials $f_i(x)$ that are irreducible over $\Q$. We characterize cyclic subgroup separability in terms of these polynomials $f_i(x)$:

\begin{thm}[Theorem \ref{thm:css}]
Let $G=\mathbb{Z}^n \ast_{\varphi}$ be an ascending HNN extension as described above.
The following are equivalent.
    \begin{enumerate}
        \item\label{item:introcycsubsep} $G$ is cyclic subgroup separable.
        \item\label{item:introequivx^ni} There is no prime $p$ and $1\leq i\leq l$ with $f_i(x)\equiv x^{n_i}$ {\rm{(mod $p$)}} (where $n_i$ is the degree of $f_i(x)$).
    \end{enumerate}    
\end{thm}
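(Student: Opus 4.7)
My plan is to prove the two implications separately. The forward direction $(1)\Rightarrow(2)$ yields to a direct $p$-adic order argument, while the reverse direction $(2)\Rightarrow(1)$ requires a more intricate construction of finite quotients and is the main obstacle.

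For $(1)\Rightarrow(2)$ I would argue the contrapositive. Suppose $f_i(x)\equiv x^{n_i}\pmod p$ for some prime $p$ and some index $i$, and let $B = A \cap \ker f_i(\varphi)_{\mathbb Q}$ be the saturated $\varphi$-invariant subgroup of rank $n_i$ on which $\varphi$ has characteristic polynomial $f_i$. The hypothesis means $\bar\varphi \colon B/pB \to B/pB$ has characteristic polynomial $x^{n_i}$ and hence is nilpotent, so $\varphi^{n_i}(B) \subseteq pB$, and by induction $\varphi^{kn_i}(B) \subseteq p^kB$ for every $k \geq 1$. Fix any non-zero $b \in B$ and write $\varphi^{kn_i}(b) = c_k^{p^k}$ for some $c_k \in B$, so that in $G$ we have $t^{kn_i} b t^{-kn_i} = c_k^{p^k}$. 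In any finite quotient $\pi\colon G \to \bar G$, the conjugate elements $\pi(b)$ and $\pi(c_k)^{p^k}$ have equal order; a short $p$-adic comparison shows that if the $p$-part of $\operatorname{ord}(\pi(b))$ were $p^a$ with $a \geq 1$ then the $p$-part of $\operatorname{ord}(\pi(c_k))$ would equal $p^{a+k}$ for every $k$, contradicting finiteness of $\bar G$. So $\operatorname{ord}(\pi(b))$ is coprime to $p$ in every finite quotient, which forces $\pi(b) \in \langle \pi(b)^p \rangle = \langle \pi(b^p)\rangle$. Since $b$ has infinite order in $G$, $b \notin \langle b^p\rangle$, so $\langle b^p\rangle$ is not separable in $G$.

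For $(2)\Rightarrow(1)$, my plan is to produce enough finite quotients of $G$ to separate any cyclic subgroup. Given $g \notin \langle h\rangle$, I would first use the projection $G \to G/\tilde A \cong \mathbb Z$ (where $\tilde A$ is the normal closure of $A$ in $G$) and residual finiteness of $G$ from Theorem~\ref{mainresult} to reduce to the case $g, h \in A$, after conjugating by a power of $t$ if necessary. Finite quotients of $G$ then arise from finite-index $\varphi$-invariant subgroups $M \leq A$ satisfying $\varphi^{-1}(M) = M$: such an $M$ makes $\bar\varphi$ an automorphism of $A/M$ of some finite order $m$, giving $G \twoheadrightarrow (A/M) \rtimes \mathbb Z/m$. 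For primes $p$ coprime to $\det\varphi$ the subgroups $M = p^jA$ work directly. For primes $p \mid \det\varphi$ I would invoke condition (2): on $A/p^jA$ one obtains a Fitting decomposition $A/p^jA = K^{(j)} \oplus V^{(j)}$ (nilpotent vs.\ invertible part of $\bar\varphi$), the preimage $M^{(j)}$ of $K^{(j)}$ in $A$ is $\varphi$-invariant with $\varphi^{-1}(M^{(j)}) = M^{(j)}$, and condition (2) ensures that $V^{(j)}$ is non-trivial for each factor $B_i$ via the local decomposition of $B_i\otimes\mathbb Z_p$ into completions at primes above $p$ in the number field $\mathbb Q[x]/f_i$.

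The main obstacle is to verify that these quotients genuinely separate $g$ from $\langle h\rangle$. The delicate case is $g = rh$ in $A\otimes\mathbb Q$ with $r = s/t$ in lowest terms and some $p \mid t$ also dividing $\det\varphi$: here $h$ may lie in $M^{(1)}$, so a naive Fitting quotient collapses $\bar h$. The resolution is to take $j$ large: since $h \neq 0$ has non-zero image in every local factor $K_{i,\mathfrak{p}}$, its image in $V^{(j)}$ is non-zero for $j$ large enough, and a direct computation shows $v_p(\bar g) = v_p(\bar h) - v_p(t) < v_p(\bar h)$ in $V^{(j)}$, placing $\bar g$ outside $\langle\bar h\rangle$. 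Combining these Fitting quotients (at primes dividing $\det\varphi$) with congruence quotients (at other primes) handles all cases, including the easy $\mathbb Q$-independent case.
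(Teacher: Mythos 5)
Your outline is essentially correct but reaches the theorem by a genuinely different route in both directions, and it is worth recording how. For \ref{item:introcycsubsep} $\Rightarrow$ \ref{item:introequivx^ni} you argue directly: taking $B=A\cap\ker f_i(\varphi_\Q)$, the characteristic polynomial of $\varphi|_B$ is a power of $f_i$, so $f_i\equiv x^{n_i}$ (mod $p$) forces $\varphi^{k\cdot\mathrm{rank}(B)}(B)\leq p^kB$, and the conjugacy $t^{N}bt^{-N}=\varphi^{N}(b)$ then caps the $p$-part of the order of $b$ in every finite quotient (your exponent should be $k\cdot\mathrm{rank}(B)$ rather than $kn_i$ when $f_i$ divides $f_\varphi$ with multiplicity, but this is cosmetic). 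This is a clean generalization of Lemma \ref{lem:eigenvalues'} and avoids the paper's intermediate condition \ref{item:Kpi} entirely; it also recovers the explicit non-separable subgroups of Remark \ref{rem:exhibitsubgroup}. For \ref{item:introequivx^ni} $\Rightarrow$ \ref{item:introcycsubsep} the paper works with the relative subgroups $K_{p^m,i}$ and derives a contradiction from the composition series $A_0<\dots<A_l$ (a forbidden $\varphi$-invariant infinite-index subgroup of $A_{i+1}$ containing $A_i$), whereas you use only the absolute Fitting kernels $K_{p^j,0}$ together with the decomposition $F_i\otimes\Q_p\cong\prod_{\mathfrak{p}\mid p}(F_i)_{\mathfrak{p}}$, $F_i=\Q[x]/f_i$: condition \ref{item:introequivx^ni} guarantees a place $\mathfrak{p}$ at which $x$ is a unit, a nonzero element of the field has nonzero image there, and its $\mathfrak{p}$-valuation is preserved by $\varphi$, so its order in $A/K_{p^j,0}$ grows with $j$. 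Pushed through the quotients $A/A_{i-1}$ (using that $A_i/A_{i-1}$ is a direct summand), this gives the order-divisibility statement of Lemma \ref{lem:ordermultiple}, which, as you note, suffices once one also treats $\Q$-independent pairs via primes coprime to $d$ (Lemmas \ref{lem:Am} and \ref{lem:trivintersect}). Your approach buys a more arithmetic and arguably more conceptual explanation of condition \ref{item:introequivx^ni}; the paper's buys a self-contained module-theoretic argument that never leaves $\Z$.

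The one place where your sketch underestimates the work is the reduction to $g,h\in A$. The projection $G\to\Z$ plus conjugation only disposes of mismatched $t$-exponents; the substantive case is $h=at^i$ with $i>0$ and $g\in A$ (or $g=bt^{li}$), where one must exhibit a finite-index normal $\bar{G}$ with $\langle h\rangle\bar{G}\cap A=\bar{G}\cap A$. The difficulty is that powers $h^m=a_mt^{im}$ with $t^{im}\in\bar{G}$ contribute the elements $a_m=a+\varphi^i(a)+\dots+\varphi^{(m-1)i}(a)$ to $\langle h\rangle\bar{G}\cap A$, and these must be forced into $\bar{G}\cap A$; this requires the choice of exponents in Lemma \ref{lem:barGcapA} (passing to $t^{ilqr}$ so that $a_{lqr}\in K$), not just residual finiteness. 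This is a fillable gap --- the paper fills it in Lemmas \ref{lem:barGcapA}--\ref{lem:cyclicsubsep} --- but it is not delivered by the tools you name.
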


In the case when \ref{item:introequivx^ni} fails, we explicitly exhibit some cyclic subgroups that are not separable (Remark \ref{rem:exhibitsubgroup}).
We also apply the theorem to some examples at the end of Section \ref{sec:cycsubsep} (Examples \ref{firstexample} to \ref{lastexample}).

\tableofcontents

\section{Preliminaries}\label{Preliminaries}

\subsection{Bass--Serre theory and labeled graphs}\label{subsec:BassSerre}

We start with basic concepts of Bass-Serre theory and of GBS groups that have been used to study residual properties of GBS groups (see, for instance, \cite{Levitt07} and \cite{Levitt15}). We then extend these definitions to our general class of groups.

By a \emph{graph of groups} $(G(-),\Gamma)$ we mean a connected graph $(\Gamma, V\Gamma, E\Gamma, \iota, \tau)$ together with a function $G(-)$ which assigns to each $v\in V\Gamma$ a group $G_v = G(v)$ and to each edge $e\in E\Gamma$ a distinguished subgroup $G_e = G(e)$ of $G(\iota e)$ and an injective homomorphism $\rho_e \colon G_e \to G_{\tau e}$. We call the $G_v$, $v\in V\Gamma$, the \emph{vertex groups} and the $G_e$, $e \in E\Gamma$, the \emph{edge groups}. A \emph{finite graph of groups} is a graph of groups where the underlying graph is finite.

If $(G(-),\Gamma)$ is a graph of groups and $\Gamma_0 \subseteq \Gamma$ is a maximal tree, the associated \emph{fundamental group} $\pi(G(-), \Gamma, \Gamma_0)$ is the group presented as follows:
\begin{itemize}
\item[(1)] The generating set is $\{ t_e \mid e\in E\Gamma \} \cup \bigcup_{v \in V\Gamma} G_v$.
\item[(2)] The relations:
\begin{itemize}
\item the relations for $G_v$, for each $v\in V\Gamma$.
\item $t_e^{-1} g t_e = \rho_e(g)$ for all $e\in E\Gamma$, $g\in G_e \subseteq G_{\iota e}$.
\item $t_e = 1$ for all $e\in E\Gamma_0$.
\end{itemize}
\end{itemize}

It turns out that if $\Gamma_0$ and $\Gamma_1$ are two different maximal subtrees of $\Gamma$, then the fundamental groups $\pi(G(-),\Gamma,\Gamma_0)$ and $\pi(G(-),\Gamma,\Gamma_1)$ are isomorphic, so we do not need to specify the maximal subtree. The two simplest examples of fundamental groups of graphs of groups are amalgamated free products and HNN extensions.

The beauty of Bass-Serre Theory comes from the link that it provides between fundamental groups of graphs of groups and groups acting on simplicial trees. More precisely, if $G= \pi(G(-), \Gamma, \Gamma_0)$ is the fundamental group of a graph of groups $(G(-), \Gamma)$, then the group $G$ acts without inversion of edges on a tree $T$ such that the underlying graph of the quotient graph $T \slash G$ is isomorphic to $\Gamma$ and the stabilizers of the vertices and edges of the tree $T$ are conjugate to the groups $G_v$, $v\in V\Gamma$, and $G_e$, $e\in E\Gamma$, respectively. The tree $T$ is named the associated \emph{Bass-Serre tree}. Conversely, if $G$ acts on a simplicial tree without inversions, then $G$ is isomorphic to the fundamental group of a graph of groups.

\bigskip

A \emph{labeled graph} is a finite graph $\Lambda$  where each edge $e$ has two labels $\lambda_{\iota, e}, \lambda_{\tau, e} \in \mathbb{Z}\setminus \{0\}$. A vertex is \emph{terminal} if it has valence one. We say that $\lambda_{\iota, e}$ is the \emph{initial label of $e$}, $\lambda_{\tau, e}$ is the \emph{terminal label of $e$} and that $\lambda_{\iota, e}$ and $\lambda_{\tau, e}$ are \emph{the labels of $e$}. We visualize labeled graphs with the labels $\lambda_{\iota, e}$ and $\lambda_{\tau, e}$ pictured near the vertices $\iota e$ and $\tau e$, respectively.

Recall that a GBS group $G$ is the fundamental group of a finite graph of groups $\Gamma$ whose vertex and edge groups are all infinite cyclic. If we choose generators for edge and vertex groups, the inclusion maps and the monomorphisms $\rho_e \colon G_e \to G_{\tau e}$ are multiplications by non-zero integers, so an edge $e$ has labels $\lambda_{\iota, e}, \lambda_{\tau, e} \in \mathbb{Z}\setminus \{0\}$ describing the inclusion of the edge group $G_e$ into the vertex group $G_{\iota e}$ and the homomorphism $\rho_e$, respectively. Thus, we may visualize the graph of groups $\Gamma$ as a labeled graph.

Conversely, a connected labeled graph $\Lambda$ defines a graph of groups. The fundamental group $G$ may be represented as follows: choose a maximal subtree $\Lambda_0 \subseteq \Lambda$. There is one generator $a_v$ for each vertex $v\in V\Lambda$ and one generator $t_e$ for each $e$ that is not in $\Lambda_0$. Each edge $e$ of $\Lambda$ contributes to a relation. The relation is $(a_{\iota e})^{\lambda_{\iota, e}}= (a_{\tau e})^{\lambda_{\tau, e}}$ if $e$ is in $\Lambda_0$, and $t_{e}^{-1}(a_{\iota e})^{\lambda_{\iota, e}}t_{e}= (a_{\tau e})^{\lambda_{\tau, e}}$ if $e$ is not in $\Lambda_0$.

\bigskip

For a natural number $n$, a \emph{rank $n$ generalized Baumslag-Solitar group} (which we abbreviate to \emph{rank $n$ GBS group}) is a group $G$ that splits as a finite graph of groups $\Gamma$ such that all edge and vertex groups are isomorphic to $\Z^n$. In this case we can also associate a labeled graph to $G$, which we will call the \emph{associated labeled graph}, where $\lambda_{\iota, e}$ will be the index of the edge group $G_e$ in the vertex group $G_{\iota e}$ and $\lambda_{\tau, e}$ the index of $\rho_e(G_e)$ in $G_{\tau e}$.

For $n=1$ these two ways of associating a labeled graph to a GBS group may differ, as the latter one is obtained from taking the absolute value of the labels of the former one. Moreover, in this general setting we do not have a correspondence between labeled graphs and rank $n$ GBS groups, since there are multiple subgroups of a fixed index in $\mathbb{Z}^n$. Fortunately, this vague way of visualizing rank $n$ GBS groups is sufficient for the purposes of this article.

\bigskip

Coming back to Bass-Serre theory, the action of a group $G$ on a simplicial tree $T$ is \emph{minimal} if $T$ is the only $G$-invariant subtree of $T$. For rank $n$ GBS groups, this is equivalent to no label near a terminal vertex being equal to $1$ in the associated labeled graph. A graph of groups is said to be \emph{reduced} if, given an edge with distinct endpoints $v_1$ and $v_2$, the subgroup $G_e < G_{v_1}$ is proper and the inclusion $G_e \hookrightarrow G_{v_2}$ is proper. In particular, an HNN extension is always reduced. An amalgam $A \ast_C B$ is reduced if $C \neq A$ and $C \neq B$. Again, this can be easily stated in terms of the labeled graph as follows: a rank $n$ GBS group is reduced if any edge with $\lambda_{\iota, e}=1$ or $\lambda_{\tau, e}=1$ is a loop. In this case we also say that the labeled graph $\Gamma$ is \emph{reduced}. If a graph of groups is not reduced, then it can be made reduced by iteratively collapsing edges that make it non-reduced, known as \emph{elementary collapses} (see \cite{Forester02}). Finally, an HNN extension $A \ast_C$ is \emph{ascending} if $C=A$ (but the conjugate of $C$ by the stable letter might not equal $A$). Looking in the associated labeled graph, an ascending HNN extension will correspond to a labeled graph with just one edge, which is a loop and which has at least one label equal to $1$.

If $G$ is the fundamental group of a graph of groups, then any subgroup of $G$ will also act on the Bass–Serre tree, and this action will give a quotient graph of groups carrying that subgroup. In the case of a free action, then we know that the quotient graph is a cover of the original graph and that there is a correspondence between covers and subgroups.

Bass constructed in \cite{Bass93} the notions of morphisms, immersions and coverings for graphs of groups to recover the same correspondence for graphs of groups. The definitions are more technical than one might anticipate, so we will just present them in the case we are interested in here, i.e. when the vertex groups are free abelian.

If $(\bar{G}(-), \bar{\Gamma})$ and $(G(-),\Gamma)$ are two  finite graphs of groups with free abelian vertex groups, by a \emph{morphism} \[ \phi \colon (\bar{G}(-), \bar{\Gamma}) \to (G(-),\Gamma)\]
we understand\\[5pt]
(i) a graph morphism $\phi:\bar{\Gamma} \to \Gamma$;\\[3pt]
(ii) group homomorphisms
\[ \phi_{\bar{v}} \colon \bar{G}_{\bar{v}} \to G_{\phi{\bar{v}}}, \quad \phi_{\bar{e}} \colon \bar{G}_{\bar{e}} \to G_{\phi{\bar{e}}} \]
such that the obvious commutativity holds between the inclusions $G_{e} \hookrightarrow G_{\iota e}$, $ \rho_e \colon G_{e} \hookrightarrow G_{\tau e}$ and $\phi$;\\[3pt]
(iii) elements $\delta_{\iota,\bar{e}}\in G_{\iota\phi\bar{e}}$ and $\delta_{\tau,\bar{e}}\in G_{\tau\phi\bar{e}}$ for each $\bar{e}\in E\bar{\Gamma}$.

A morphism of graphs of groups induces a homomorphism of the fundamental groups \cite[Proposition 2.4]{Bass93}.

\begin{rem}
    In the notation of \cite[Definition 2.1]{Bass93}, we are taking the elements $\gamma_a$ to be trivial, while the elements $\gamma_e=\delta_e$ correspond to the elements $\delta_{\iota,\bar{e}}$ and $\delta_{\tau,\bar{e}}$ in (iii) above.
    In this way, our notion of morphism of graphs of groups is a special case of Bass' notion, but it will suffice for the arguments in this paper.
    Note also that equation (2.2) in \cite{Bass93} reduces to the commutativity assumption in (ii) above because we are assuming that the vertex groups are abelian.
\end{rem}

A morphism of graphs of groups as above is an \emph{immersion} if\\[5pt]
(i) the group homomorphisms $\phi_{\bar{v}}$ and $\phi_{\bar{e}}$ are injective;\\[3pt]
(ii) $\phi_{\bar{\iota} \bar{e}}(\bar{G}_{\bar{\iota} \bar{e}}) \cap G_{\phi \bar{e}} = \phi_{\bar{e}}(\bar{G}_{\bar{e}})$ and $\phi_{\bar{\tau}\bar{e}}(\bar{G}_{\bar{\tau}\bar{e}})\cap \rho_{\phi\bar{e}}(G_{\phi\bar{e}})=\rho_{\phi\bar{e}}(\phi_{\bar{e}}(\bar{G}_{\bar{e}}))$
for all edges $\bar{e}$ in $\bar{\Gamma}$;\\[3pt]
(iii) if $\phi\bar{v}=v$ and $\iota e=v$ then the map
\begin{equation}\label{immersioniota}
    \{\delta_{\iota,\bar{e}}\mid \bar{\iota}\bar{e}=\bar{v},\, \phi\bar{e}=e\}\to G_v/\phi_{\bar{v}}(\bar{G}_{\bar{v}})G_e
\end{equation}
is injective; similarly, if $\tau e=v$ then the map
\begin{equation}\label{immersiontau}
    \{\delta_{\tau,\bar{e}}\mid \bar{\tau}\bar{e}=\bar{v},\, \phi\bar{e}=e\}\to G_v/\phi_{\bar{v}}(\bar{G}_{\bar{v}})\rho_e(G_e)
\end{equation}
is injective.

The immersion is a \emph{covering} if the maps in (iii) are bijections. 
An immersion of graphs of groups induces an injective homomorphism $\bar{G}\hookrightarrow G$ of the fundamental groups \cite[Proposition 2.7]{Bass93}. If the immersion is actually a covering, the subgroup $\bar{G}$ is of finite index in $G$.

\begin{rem}
    Conditions (ii) and (iii) above correspond to condition (ii) in \cite[Definition 2.6]{Bass93}.
\end{rem}


If $G$ is a rank $n$ GBS group, when referring to it as a graph of groups $(G(-), \Gamma)$ we will assume that $\Gamma$ is the associated labeled graph. 
In Section \ref{ResFin}, the main way we will build immersions and coverings of graphs of groups (and thus obtain certain subgroups of rank $n$ GBS groups) will be by building immersions and coverings of labeled graphs and using the following lemma.
The one exception to this will be Lemma \ref{lem:loopandedge}, for which we will provide further explanation.

\begin{lem}\label{lem:induceimm}
    Let $G$ be a rank $n$ GBS group with labeled graph $\Gamma$, and let $\bar{\Gamma}$ be another labeled graph. Suppose $\phi:\bar{\Gamma}\to\Gamma$ is an immersion (resp. covering) of graphs that preserves the labels.
    Then $\phi$ can be extended to an immersion (resp. covering) of graphs of groups $\phi:(\bar{G}(-),\bar{\Gamma})\to(G(-),\Gamma)$. Moreover, $\phi$ corresponds to a (finite-index) subgroup $\bar{G}<G$, which is itself a rank $n$ GBS group with labeled graph $\bar{\Gamma}$.
\end{lem}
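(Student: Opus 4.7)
The plan is to construct the graph-of-groups structure on $\bar{\Gamma}$ by pulling back along $\delta$, verify Bass's axioms for a morphism/immersion/covering, and invoke Proposition 2.7 of \cite{Bass93}.

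First I would define $(\bar{G}(-),\bar{\Gamma})$ by setting $\bar{G}_{\bar{v}}:=G_{\delta\bar{v}}$ for each vertex $\bar{v}\in V\bar{\Gamma}$ and $\bar{G}_{\bar{e}}:=G_{\delta\bar{e}}$ for each edge $\bar{e}\in E\bar{\Gamma}$, with the inclusion $\bar{G}_{\bar{e}}\hookrightarrow \bar{G}_{\bar{\iota}\bar{e}}$ and the injection $\phi_{\bar{e}}:\bar{G}_{\bar{e}}\to\bar{G}_{\bar{\tau}\bar{e}}$ defined to be exactly the inclusion $G_{\delta\bar{e}}\hookrightarrow G_{\iota\delta\bar{e}}$ and the map $\phi_{\delta\bar{e}}:G_{\delta\bar{e}}\to G_{\tau\delta\bar{e}}$ respectively. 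All vertex and edge groups are then isomorphic to $\Z^n$, so once we know the resulting fundamental group embeds into $G$, we will get a rank $n$ GBS subgroup. I take the maps $\delta_{\bar{v}}$ and $\delta_{\bar{e}}$ to be the respective identity maps.

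Next I would check that this is a morphism of graphs of groups in Bass's sense. The required commutativity between the inclusions, $\phi_{\bar{e}}$, $\phi_{\delta\bar{e}}$ and $\delta_{\bar{v}},\delta_{\bar{e}}$ is immediate since all the $\delta_{\bar{v}}$ and $\delta_{\bar{e}}$ are identities and the edge maps on $\bar{\Gamma}$ were defined to coincide with those on $\Gamma$. The two intersection conditions, namely $\delta_{\bar{\iota}\bar{e}}(\bar{G}_{\bar{\iota}\bar{e}})\cap G_{\delta\bar{e}} = \delta_{\bar{e}}(\bar{G}_{\bar{e}})$ and the analogous one at $\bar{\tau}\bar{e}$, reduce to tautologies like $G_{\iota\delta\bar{e}}\cap G_{\delta\bar{e}} = G_{\delta\bar{e}}$, which hold because $G_{\delta\bar{e}}\subseteq G_{\iota\delta\bar{e}}$.

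For the immersion condition, fix $\bar{v}$ with $\delta\bar{v}=v$ and an edge $e$ with $\iota e=v$. Bass's bound is $|G_v : \delta_{\bar{v}}(\bar{G}_{\bar{v}})G_e|$, which equals $|G_v:G_v|=1$ because $\delta_{\bar{v}}$ is the identity onto $G_v$. So the required bound reads: at most one $\bar{e}$ with $\bar{\iota}\bar{e}=\bar{v}$ and $\delta\bar{e}=e$. This is exactly the local-injectivity-on-stars condition for $\delta$ to be a graph immersion, which is part of our hypothesis, and for a graph covering we obtain equality for the same reason. Having verified the axioms, Proposition 2.7 of \cite{Bass93} yields an embedding $\bar{G}:=\pi(\bar{G}(-),\bar{\Gamma})\hookrightarrow G$, of finite index in the covering case. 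Finally, the labeled graph associated to $\bar{G}$ is $\bar{\Gamma}$: the index $|\bar{G}_{\bar{\iota}\bar{e}}:\bar{G}_{\bar{e}}|$ in our construction equals $|G_{\iota\delta\bar{e}}:G_{\delta\bar{e}}| = \lambda_{\iota,\delta\bar{e}}$, which by the hypothesis that $\delta$ respects labels is $\bar{\lambda}_{\iota,\bar{e}}$, and likewise for the terminal labels. The only real obstacle is the bookkeeping of Bass's definition of a graph-of-groups morphism, but it simplifies dramatically because we chose identity maps for $\delta_{\bar{v}}$ and $\delta_{\bar{e}}$.
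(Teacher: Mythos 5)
Your construction is exactly the paper's: pull back the vertex and edge groups along $\delta$, take the $\delta_{\bar{v}}$ and $\delta_{\bar{e}}$ to be identities, observe that Bass's bound $|G_v:\delta_{\bar{v}}(\bar{G}_{\bar{v}})G_e|=1$ reduces the immersion/covering condition to the graph-level hypothesis, and invoke \cite[Proposition 2.7]{Bass93}. This matches the paper's proof; your additional verification of the morphism axioms and of the labels on $\bar{\Gamma}$ is correct detail the paper leaves implicit.
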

\begin{proof}
    Define the vertex and edge groups of $(\bar{G}(-),\bar{\Gamma})$ by $\bar{G}_{\bar{v}}=G_{\phi\bar{v}}$ and $\bar{G}_{\bar{e}}=G_{\phi\bar{e}}$ and let the maps $\phi_{\bar{v}}$ and $\phi_{\bar{e}}$ be identity maps. Similarly, let $\bar{G}_{\bar{e}}\hookrightarrow\bar{G}_{\bar{\iota}\bar{e}}$ and $\rho_{\bar{e}}:\bar{G}_{\bar{e}}\hookrightarrow\bar{G}_{\bar{\tau}\bar{e}}$ 
    be the inclusions induced by the corresponding inclusions in $(G(-),\Gamma)$.
    If $\iota e=v=\phi\bar{v}$, then there is at most one edge $\bar{e}$ with $\bar{\iota}\bar{e}=\bar{v}$ and $\phi\bar{e}=e$, so we may take $\delta_{\iota,\bar{e}}$ to be trivial and equation (\ref{immersioniota}) will be satisfied. The case $\tau e=v$ works similarly. So $\phi$ is an immersion of graphs of groups.
    If $\phi:\bar{\Gamma}\to\Gamma$ is a covering of graphs, then there must be exactly one edge $\bar{e}$ with $\bar{\iota}\bar{e}=\bar{v}$ and $\phi\bar{e}=e$, so in this case the maps (\ref{immersioniota}) will be bijections (and similarly the maps (\ref{immersiontau}) will be bijections), so $\phi$ will be a covering of graphs of groups.
\end{proof}

\subsection{Modular homomorphism for rank $n$ GBS groups}\label{Modular homomorphism}

Let us first recall the definition of the abstract commensurator of a group $G$. We consider the set $\Omega(G)$ of all isomorphisms between subgroups of finite index of $G$. Two such isomorphisms $\varphi_1 \colon H_1 \to K_1$ and $\varphi \colon H_2 \to K_2$ are equivalent, written $\varphi_1 \sim \varphi_2$, if there exists a subgroup $H$ of finite index in $G$ such that both $\varphi_1$ and $\varphi_2$ are defined on $H$ and ${\varphi_1}_{|H}= {\varphi_2}_{|H}$. The factor-set $\Omega(G) \slash \sim$ is then a group, called the \emph{abstract commensurator of $G$} and denoted by $\Comm(G)$.

Now let $G$ be a rank $n$ GBS group and let $T$ be the corresponding Bass-Serre tree. We will assume throughout that the action $G\acts T$ is minimal. Note that each edge stabilizer necessarily has finite index in its adjacent vertex stabilizers, hence all edge and vertex stabilizers are commensurable in $G$.
This also implies that $T$ is locally finite.

Fix a vertex $x\in VT$. The \emph{modular homomorphism} is the homomorphism $\Delta: G\to\Comm(G_x)$ to the abstract commensurator of the vertex stabilizer $G_x$ defined as follows.
For $g\in G$, the intersections $G_{g^{-1}x}\cap G_x$ and $G_x\cap G_{gx}$ are both finite-index subgroups of $G_x$, and we define $\Delta(g)$ to be (the equivalence class of) the isomorphism 
\begin{align}\label{Deltacon}
    G_{g^{-1}x}\cap G_x&\to G_x\cap G_{gx}\\\nonumber
    h&\mapsto ghg^{-1}.
\end{align}
We have $G_x\cong\Z^n$, so it is easy to see that $\Comm(G_x)$ can be naturally identified with $\GL(n,\Q)$.
We will thus consider $\Delta$ as a homomorphism $\Delta\colon G\to\GL(n,\Q)$.

This is a generalization of the modular homomorphism $G\to\Q^*$ that is defined for rank 1 GBS groups (e.g. in \cite{Levitt15}).
We note that the modular homomorphism for rank $n$ GBS groups was also defined in \cite{Button22}, where it was proved that $\Delta(G)$ is finite if and only if $G$ is virtually the direct product of $\Z^n$ with a free group (which in turn is equivalent to $G$ being virtually hierarchically hyperbolic).

The following proposition gives several equivalent conditions for a rank $n$ GBS group to be virtually $\Z^n$-by-free. We note that the equivalence with condition \ref{item:modcon} is an adaptation of the argument from \cite[Proposition 10.4]{LearyMinasyan21}.

\begin{prop}\label{prop:normalZn}
Let $G$ be a rank $n$ GBS group with Bass-Serre tree $T$. The following are equivalent:
\begin{enumerate}
    \item\label{item:normalZn} There exists a normal subgroup $\Z^n\cong N\triangleleft G$.
    \item\label{item:normalactt} There exists a normal subgroup $\Z^n\cong K\triangleleft G$ that acts trivially on $T$.
    \item\label{item:modcon} The image of the modular homomorphism $\Delta(G)<\GL(n,\Q)$ is conjugate to a subgroup of $\GL(n,\Z)$.
    \item\label{item:vZnfree} $G$ is virtually $\Z^n$-by-free.
\end{enumerate}
\end{prop}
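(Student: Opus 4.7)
The plan is to establish the equivalences in three pairs: $(1) \Leftrightarrow (2)$, $(2) \Leftrightarrow (4)$, and $(2) \Leftrightarrow (3)$. The direction $(2) \Rightarrow (1)$ is trivial. For $(1) \Rightarrow (2)$, I would analyze the $N$-action on $T$: since $N \cong \Z^n$ is finitely generated abelian, either every element of $N$ is elliptic, in which case Serre's fixed-point theorem gives $N$ a common fixed vertex, and normality of $N$ plus minimality of $G \acts T$ force $N$ to fix every vertex of $T$, giving $K = N$; or $N$ contains a hyperbolic element, and the commuting hyperbolic elements share a common axis $L$, which must be $G$-invariant by normality of $N$, so $T = L$ by minimality. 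In the latter case a direct calculation using vertex and edge valences in $T$ shows $G$ is either $\Z^n \rtimes \Z$ (HNN with $\varphi$ an automorphism) or $\Z^n \ast_C \Z^n$ with $[A:C]=[B:C]=2$, and in both cases the pointwise stabilizer of $L$ is the required normal $\Z^n$ acting trivially on $T$.

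For $(2) \Leftrightarrow (4)$, the forward direction is short: $K$ lies in each vertex stabilizer with finite index, so $G/K$ acts on $T$ with finite vertex stabilizers and is virtually free, hence the preimage of a free finite-index subgroup of $G/K$ is a $\Z^n$-by-free finite-index subgroup of $G$. For $(4) \Rightarrow (1)$, take a finite-index $H \leq G$ with $1 \to N \to H \to F \to 1$ and replace $H$ by its normal core so $H \triangleleft G$. When $\mathrm{rank}(F) \geq 2$, the subgroup $N$ is the unique maximal normal abelian subgroup of $H$ (any other would project to a nontrivial normal abelian subgroup of the free group $F$, which is impossible) and hence characteristic in $H$ and normal in $G$. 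When $\mathrm{rank}(F) \leq 1$, $G$ is virtually abelian, and the normal core of any finite-index $\Z^n$ subgroup of $G$ is the required normal $\Z^n$. For $(2) \Rightarrow (3)$, the conjugation action of $G$ on $K$ gives a homomorphism $G \to \mathrm{Aut}(K) \cong \GL(n,\Z)$; rationalizing, this coincides with $\Delta$ on $K \otimes \Q = G_x \otimes \Q$, so $\Delta(G)$ preserves the lattice $K$ and thus lies in $\GL(n,\Z)$ with respect to a $K$-basis of $\Q^n$, i.e., in a $\GL(n,\Q)$-conjugate of $\GL(n,\Z)$ with respect to any basis.

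The hardest step is $(3) \Rightarrow (2)$. Given a $\Delta(G)$-invariant lattice $L$ in $G_x \otimes \Q$, I would use the equivariance relation $c_g L = L_{gx}$ (where $L_v$ denotes the path-transport of $L$ along the unique geodesic from $x$ to $v$ in $T$, obtained by composing the rationalized edge inclusions) to construct a $G$-equivariant family of finite-index subgroups $\{L_v \leq G_v\}_{v \in VT}$. Compatibility of this family across edges determines a finite-index sub-GBS $\bar G \leq G$ whose kernel of action on $T$ is a normal $\Z^n$ inside $\bar G$; intersecting with finitely many $G$-conjugates of this kernel then upgrades it to a normal $\Z^n$ in $G$ acting trivially on $T$. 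The main obstacle is verifying that the path-transported lattices glue consistently across edges of the Bass-Serre tree and that the resulting subgroup retains rank exactly $n$ after intersecting $G$-conjugates; an attractive alternative is an averaging argument that replaces $L$ with a finite sum of $\Delta(g_i)$-translates (over coset representatives of a normal finite-index subgroup) to directly obtain a lattice preserved by all of $\Delta(G)$ and contained in a single vertex stabilizer, from which the normal $\Z^n$ is extracted.
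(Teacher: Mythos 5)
Your overall architecture matches the paper's, and several directions are sound: the elliptic/hyperbolic dichotomy for $(1)\Rightarrow(2)$ works (your observation that $\mathrm{Fix}(N)$ is a nonempty $G$-invariant subtree, hence all of $T$ by minimality, is if anything cleaner than the paper's passage to a characteristic finite-index subgroup of $N$), and your $(2)\Rightarrow(4)$ and $(2)\Rightarrow(3)$ are essentially the paper's arguments. But there are two genuine gaps. First, in $(4)\Rightarrow(1)$ the case $\mathrm{rank}(F)\leq 1$ rests on the claim that $G$ is then virtually abelian, which is false: $\Z^2\rtimes_A\Z$ with $A=\left(\begin{smallmatrix}2&1\\1&1\end{smallmatrix}\right)$ is a rank $2$ GBS group (an HNN extension with $\varphi$ an automorphism of $\Z^2$), is $\Z^2$-by-$\Z$, and has exponential growth, so is not virtually abelian. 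Moreover, even when $G$ is virtually abelian it may be virtually $\Z^{n+1}$ (e.g.\ $G=\Z^{n+1}$ is a rank $n$ GBS group), in which case there is no finite-index $\Z^n$ subgroup whose normal core you could take. Your $\mathrm{rank}(F)\geq2$ argument (the kernel is the unique maximal normal abelian subgroup, hence characteristic) is correct, but the low-rank case needs a different idea. The paper sidesteps this entirely by proving $(4)\Rightarrow(2)$: apply $(1)\Rightarrow(2)$ to the finite-index $\Z^n$-by-free subgroup $G'$ to get $K'\cong\Z^n$ acting trivially on $T$, then observe that the kernel $K$ of $G\acts T$ is automatically normal in $G$ and is sandwiched between $K'$ and a vertex stabilizer, both isomorphic to $\Z^n$, so $K\cong\Z^n$.

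Second, $(3)\Rightarrow(2)$ is not actually established. Your primary route (path-transporting lattices over $T$ and gluing) is left with obstacles you yourself flag, and the ``averaging'' alternative is both unnecessary and incomplete: condition $(3)$ already hands you a single $\Delta(G)$-invariant lattice (no averaging over coset representatives is needed), and the step you summarize as ``from which the normal $\Z^n$ is extracted'' is precisely where the content lies. The paper's argument is short: writing the invariant lattice as $CG_x$, rescale to $N:=mCG_x$ so that $N\subset G_{g_i^{-1}x}\cap G_x$ for each member $g_i$ of a finite generating set; since $\Delta(g_i)$ \emph{is} the conjugation map on $G_{g_i^{-1}x}\cap G_x$ and $\Delta(g_i)N=N$, one gets $g_iNg_i^{-1}=N$ directly, hence $N\triangleleft G$, and then $(1)\Rightarrow(2)$ finishes. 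Without this identification of $\Delta(g)$ with honest conjugation on a sufficiently small lattice, your sketch does not close.
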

\begin{proof}

    \ref{item:normalZn} $\Rightarrow$ \ref{item:normalactt}: First suppose that $N$ is contained in a vertex stabilizer $G_v$. Since $N$ is normal, it fixes every vertex in the orbit $G\cdot v$. Let $S\subset T$ be a finite subtree whose $G$-translates cover $T$.
    As $T$ is locally finite, there exists a finite-index normal subgroup $K\triangleleft N$ that fixes $S$.
    Passing to a further finite-index subgroup if necessary, we may assume that $K$ is characteristic in $N$, hence normal in $G$. It then follows that $K$ fixes every $G$-translate of $S$, hence it acts trivially on $T$. Moreover, $K\cong\Z^n$ because it has finite index in $N\cong\Z^n$.

    Now suppose that $N$ is not contained in any vertex stabilizer of $G$. Then there exists $h\in N$ acting hyperbolically on $T$, say it translates along an axis $\gamma\subset T$. As $N$ is abelian, it must stabilize $\gamma$, and since $N$ is normal in $G$ it must stabilize all $G$-translates of $\gamma$.
    If there exists $g\in G$ with $g\gamma\neq\gamma$, then $g\gamma\cap\gamma$ is stabilized by $N$ but not by $h$ (since $h$ translates along $\gamma$), a contradiction. Thus $G$ stabilizes $\gamma$. But then $T=\gamma$ by the minimality of $G\acts T$, and we deduce that the kernel $K$ of the $G$-action has index either 1 or 2 in each vertex stabilizer of $G$, hence $K\cong\Z^n$ is the required normal subgroup.

    \ref{item:normalactt} $\Rightarrow$ \ref{item:modcon}: The given subgroup $K$ acts trivially on $T$, so it must be contained in the vertex stabilizer $G_x$ (where $x$ comes from the definition of the modular homomorphism $\Delta$). Moreover, $K\cong\Z^n\cong G_x$, so there is $B\in\GL(n,\Q)$ with integer entries such that $K=BG_x$ (identifying $\Comm(G_x)$ with $\GL(n,\Q)$ as we did earlier).
    Given $g\in G$, the matrix $\Delta(g)\in\GL(n,\Q)$ represents the conjugation map (\ref{Deltacon}). As $K$ is normal in $G$ and is contained in every vertex stabilizer of $G$, we deduce that $\Delta(g)K=K$.
    Therefore $(B^{-1}\Delta(g)B)G_x=G_x$, and $B^{-1}\Delta(g)B\in\GL(n,\Z)$. This holds for all $g\in G$, so $B^{-1}\Delta(G)B<\GL(n,\Z)$ as required.

    \ref{item:modcon} $\Rightarrow$ \ref{item:normalZn}: Assume $C$ is an element in $\GL(n,\Q)$ with $C^{-1}\Delta(G)C<\GL(n,\Z)$.
    Let $\{g_1,\dots,g_t\}$ be a finite generating set for $G$.
    Let $B:=mC$ for an integer $m$ chosen so that $N:=BG_x$ is contained in $G_{g_i^{-1}x}\cap G_x$ for all $1\leq i\leq t$.
    Then, for each $1\leq i\leq t$, since $B^{-1}\Delta(g_i)B=C^{-1}\Delta(g_i)C\in\GL(n,\Z)$, we have
    $$\Delta(g_i)N=(BB^{-1}\Delta(g_i)B)G_x=BG_x=N.$$
    This implies that $g_i Ng_i^{-1}=N$, so $\Z^n\cong N\triangleleft G$ as required.

    \ref{item:normalactt} $\Rightarrow$ \ref{item:vZnfree}: The given subgroup $K$ acts trivially on $T$, so it is contained in every vertex stabilizer of $G$. Since all these stabilizers are isomorphic to $\Z^n$, and since $K\cong\Z^n$, we deduce that $K$ has finite index in each vertex stabilizer of $G$. This implies that the quotient $G/K$ acts on $T$ with finite vertex stabilizers, hence $G/K$ is virtually free. It follows that $G$ is virtually $\Z^n$-by-free.

    \ref{item:vZnfree} $\Rightarrow$ \ref{item:normalactt}: Let $G'<G$ be a finite-index subgroup that is $\Z^n$-by-free. Since $G'$ is itself a rank $n$ GBS group, we may apply \ref{item:normalZn} $\Rightarrow$ \ref{item:normalactt} to $G'$ to obtain a subgroup $\Z^n\cong K'\triangleleft G'$ acting trivially on $T$.
    If $K\triangleleft G$ is the kernel of the action $G\acts T$, we deduce that $K'<K<G_v$ for any vertex stabilizer $G_v$.
    As $K'$ and $G_v$ are both isomorphic to $\Z^n$, it follows that $K\cong\Z^n$.
\end{proof}

\section{Residual finiteness}\label{ResFin}

The goal of this section is to prove the following result.

\begin{thm}\label{mainresult}
Let $G$ be a rank $n$ GBS group. Then $G$ is residually finite if and only if either
\begin{itemize}
    \item $G \cong \mathbb{Z}^n \ast_{\varphi}$ is an ascending HNN extension (i.e. $\varphi \colon \mathbb{Z}^n \to \Z^n $ is a monomorphism); or
    \item $G$ is virtually $\mathbb{Z}^n$-by-free.
\end{itemize}
\end{thm}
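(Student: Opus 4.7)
The proof splits into the two directions of the equivalence.

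\emph{Sufficiency.} Suppose first that $G\cong \mathbb{Z}^n *_\varphi$ is an ascending HNN extension with base $A\cong\mathbb{Z}^n$ and stable letter $t$, so that $tAt^{-1}=\varphi(A)\subseteq A$. Then the subgroups $t^{-k}At^{k}$ for $k\geq 0$ form a nested ascending chain of abelian subgroups, whose union is the normal closure of $A$ in $G$; this normal closure is therefore abelian, making $G$ metabelian. By P.\ Hall's theorem, finitely generated metabelian groups are residually finite. Suppose instead that $G$ is virtually $\mathbb{Z}^n$-by-free, with a finite-index subgroup $H$ fitting in $1\to\mathbb{Z}^n\to H\to F\to 1$ with $F$ finitely generated free. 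Such $H$ is residually finite: elements outside $\mathbb{Z}^n$ project non-trivially to the residually finite group $F$, while non-trivial elements of $\mathbb{Z}^n$ can be separated using a quotient $\mathbb{Z}^n/p\mathbb{Z}^n$ together with a finite-index subgroup of $F$ that acts trivially on $\mathbb{Z}^n/p\mathbb{Z}^n$. Since residual finiteness passes up from finite-index subgroups, $G$ is residually finite.

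\emph{Necessity.} Assume $G$ is residually finite. After performing elementary collapses, we may assume the associated labeled graph $\Gamma$ is reduced. If $\Gamma$ is a single loop with at least one label equal to $1$, then $G$ is an ascending HNN extension and we are done; otherwise we aim to show that $G$ is virtually $\mathbb{Z}^n$-by-free. By Proposition \ref{prop:normalZn} this reduces to showing that the image $\Delta(G)\subseteq\GL(n,\mathbb{Q})$ of the modular homomorphism is conjugate into $\GL(n,\mathbb{Z})$. The plan is to apply Lemma \ref{lem:induceimm} to label-preserving immersions $\bar{\Gamma}\to\Gamma$ in order to extract simpler rank $n$ GBS subgroups of $G$. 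Each such subgroup inherits residual finiteness from $G$ and therefore imposes numerical constraints on the labels of $\Gamma$ and on the matrices describing the edge maps.

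The key cases arise when $\bar{\Gamma}$ consists of a single edge, producing either an HNN subgroup $\mathbb{Z}^n *_\psi$ (when the edge is a loop) or an amalgam $\mathbb{Z}^n *_C \mathbb{Z}^n$ (when the edge is a segment) as a subgroup of $G$. Combined with appropriate labeled covers of $\Gamma$, these single-edge subgroups --- together with the classical rank $1$ Baumslag--Solitar obstructions to residual finiteness --- force the indices and the inclusion data to satisfy rigid integrality conditions. The main obstacle is assembling these edge-local constraints into a single global integrality statement for $\Delta(G)$: whereas in rank $1$ the modular image lies in $\mathbb{Q}^*$ and integrality can be verified prime by prime, in higher rank one needs a single conjugating matrix $B\in\GL(n,\mathbb{Q})$ that integralizes the images of all generators of $G$ simultaneously. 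Producing such $B$ requires non-trivial coordination of the local constraints coming from different edges of $\Gamma$, and this is the most substantial part of the argument.
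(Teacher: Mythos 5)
Your sufficiency direction is fine: the metabelian argument for the ascending HNN case and the direct separation argument for $\Z^n$-by-free groups both work, and are if anything more self-contained than the paper's citations. The problem is the necessity direction, which is not a proof but a plan, and the plan defers exactly the part that constitutes the theorem. You write that assembling the edge-local constraints into a single conjugating matrix $B\in\GL(n,\Q)$ ``is the most substantial part of the argument'' and then stop; nothing in your text actually produces the integrality conditions, let alone coordinates them. Moreover, your case split is off: when the reduced graph $\Gamma$ contains a loop with labels $1,q$ ($q\neq1$) \emph{together with other edges}, the correct conclusion is not that $G$ is virtually $\Z^n$-by-free but that $G$ cannot be residually finite at all, so this configuration must be ruled out by contradiction rather than folded into the ``otherwise'' branch where you try to integralize $\Delta(G)$.

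For comparison, the paper avoids the global conjugation problem entirely. It never verifies condition (3) of Proposition \ref{prop:normalZn} directly; instead it (i) uses residual finiteness plus a commutator trick in amalgams (Proposition \ref{prop1}, Lemma \ref{lemma2}) to pass to a finite-index subgroup whose labeled graph has all non-loop labels equal to $1$; (ii) shows that a reduced graph containing a strictly ascending loop plus any other edge admits an immersed circular labeled graph whose product of label ratios is $q\neq1$, contradicting residual finiteness via the normal $\Z^n$ acting trivially on the tree (Lemmas \ref{lemmaloop}, \ref{lemmadoubleloop}, \ref{lem:loopandedge}); and (iii) concludes in the remaining case via Lemma \ref{lemma3}, where the kernel of the action on the Bass--Serre tree is visibly a normal $\Z^n$, which is condition (2) of Proposition \ref{prop:normalZn}. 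Each of these steps requires a genuine argument that your proposal does not supply, so as written the necessity direction has a real gap.
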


Our arguments will involve working with subgroups of $G$ that are generated by pairs of adjacent vertex groups, so the following proposition will be useful.

\begin{prop}\label{prop1}
Let $G$ be a residually finite group with a subgroup $H<G$ of the form
\[ H= A_1 \ast_{\varphi} A_2,\]
where $A_1$ and $A_2$ are free abelian groups of rank $n$ and $\varphi \colon A_1^\prime \to A_2^\prime$ is an isomorphism with $A_i^\prime < A_i$ a proper subgroup of finite index for $i\in \{1,2\}$.

Then there is a finite-index normal subgroup $\bar{G}$ in $G$ such that $\bar{G} \cap A_1 < A_1^\prime$ and $\bar{G} \cap A_2 < A_2^\prime$.
\end{prop}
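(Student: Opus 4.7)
The plan is to exploit the amalgamation structure of $H$ to produce a finite set of nontrivial commutators whose separation from the identity will force the desired intersection property, and then to apply residual finiteness of $G$. I would pick non-identity coset representatives $x_1, \dots, x_r \in A_1 \setminus A_1'$ for $A_1/A_1'$ and $y_1, \dots, y_s \in A_2 \setminus A_2'$ for $A_2/A_2'$. Since $A_i'$ is proper in $A_i$, I can also fix elements $a_1 \in A_1 \setminus A_1'$ and $a_2 \in A_2 \setminus A_2'$, and form the commutators $w_i := [a_2, x_i]$ and $w_j' := [a_1, y_j]$. Each $w_i = a_2 x_i a_2^{-1} x_i^{-1}$ is a reduced length-$4$ word in $H = A_1 *_\varphi A_2$ since its letters alternate strictly between $A_2 \setminus A_2'$ and $A_1 \setminus A_1'$, so $w_i \ne 1$ in $H$ and hence in $G$; symmetrically $w_j' \ne 1$.

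By residual finiteness of $G$, for each of the finitely many elements $w_i, w_j'$ there is a finite-index normal subgroup of $G$ avoiding it; let $\bar G$ be their intersection. Then $\bar G \triangleleft G$ is of finite index, and none of the $w_i, w_j'$ lies in $\bar G$.

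To check $\bar G \cap A_1 \subseteq A_1'$ (the $A_2$ case being symmetric), suppose for contradiction that $x \in \bar G \cap A_1$ with $x \notin A_1'$. Then $x = x_i y$ for some $i$ and some $y \in A_1'$. Inside $H$ the amalgamation identifies $y \in A_1'$ with $\varphi(y) \in A_2'$, so $y$ lies in the abelian group $A_2$ and commutes with $a_2$. A short direct calculation, pushing $y$ through $a_2^{-1}$, then gives $[a_2, x] = [a_2, x_i y] = [a_2, x_i] = w_i$. On the other hand, since $\bar G$ is normal in $G$ and $x \in \bar G$, we get $a_2 x a_2^{-1} \in \bar G$, and $x^{-1} \in \bar G$; hence $[a_2, x] \in \bar G$, forcing $w_i \in \bar G$, a contradiction. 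Thus $x \in A_1'$.

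The only point requiring any real cleverness is spotting the identity $[a_2, x_i y] = [a_2, x_i]$ for $y \in A_1'$: it collapses an a priori infinite family (the whole coset $x_i A_1'$) to a single commutator, which is what makes residual finiteness — a tool that only ever separates finitely many elements at once — sufficient for what looks at first like an infinite separation problem. With this observation in hand, checking that the commutators are nontrivial (from the normal form in the amalgamated product) and the rest of the argument are routine.
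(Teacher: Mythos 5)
Your proof is correct and rests on exactly the same ingredients as the paper's: commutators of elements of $A_2\setminus A_2'$ with elements of $A_1\setminus A_1'$ are nontrivial by the normal form theorem, residual finiteness separates the finitely many needed, and such a commutator depends only on the coset of its $A_1$-entry modulo the amalgamated subgroup (since $A_1'=A_2'$ commutes with everything in sight). The only difference is organizational: the paper factors the commutator computation into an intermediate lemma about $\bigcap_{\bar G\triangleleft_{fi}G}(\bar G\cap A_1)A_1'$ and then runs over a transversal, whereas you separate the commutators $[a_2,x_i]$ of the coset representatives directly, which is a minor streamlining rather than a different route.
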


We need the following lemma before proving Proposition \ref{prop1}.

\begin{lem}\label{lemma1}
Under the above conditions, 
\[ \bigcap_{\bar{G} \triangleleft_{fi} G} (\bar{G} \cap A_1) A_1^\prime= A_1^\prime \quad \text{and} \quad \bigcap_{\bar{G} \triangleleft_{fi} G} (\bar{G} \cap A_2) A_2^\prime= A_2^\prime.\]
\end{lem}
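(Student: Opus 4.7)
The inclusion $A_1' \subseteq (\bar{G} \cap A_1)A_1'$ is trivial for every finite-index normal subgroup $\bar{G}$, so the plan is to establish the reverse inclusion by showing that for each $a \in A_1 \setminus A_1'$ there exists some $\bar{G} \triangleleft_{fi} G$ with $a \notin (\bar{G} \cap A_1)A_1'$. The argument for the $A_2$-statement will be completely symmetric, so I only need to handle $A_1$.

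The key idea is to exploit the amalgam structure in $H$ to manufacture a nontrivial commutator. Since $A_2'$ is proper in $A_2$, I can pick some $b \in A_2 \setminus A_2'$. Then in $H = A_1 \ast_\varphi A_2$ the word $[a,b] = a\,b\,a^{-1}\,b^{-1}$ is alternating between $A_1$ and $A_2$, and none of its letters lies in the amalgamated subgroup (since $a, a^{-1} \notin A_1'$ and $b, b^{-1} \notin A_2'$); it is therefore a reduced word of length $4$, hence nontrivial in $H$, and so nontrivial in $G$ since $H < G$. Using that $G$ is residually finite, I choose a finite-index normal subgroup $\bar{G} \triangleleft G$ with $[a,b] \notin \bar{G}$.

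It remains to verify that this $\bar{G}$ witnesses $a \notin (\bar{G} \cap A_1)A_1'$. Suppose for contradiction that $a = a_0 a'$ with $a_0 \in \bar{G} \cap A_1$ and $a' \in A_1'$. The amalgamation identifies $a'$ with $\varphi(a') \in A_2'$, so $a'$ and $b$ lie in the common abelian subgroup $A_2$ of $H$ and therefore commute in $H$, hence in $G$. Thus $[a', b] = 1$ in $G$. Reducing modulo $\bar{G}$ gives $a \equiv a' \pmod{\bar{G}}$, so $[a,b] \equiv [a',b] = 1 \pmod{\bar{G}}$, contradicting the choice of $\bar{G}$. This completes the proof of $\bigcap_{\bar{G} \triangleleft_{fi} G} (\bar{G} \cap A_1)A_1' = A_1'$; the statement for $A_2$ follows by the same argument after swapping the roles of $A_1$ and $A_2$ (and replacing $\varphi$ by $\varphi^{-1}$).

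The only subtle point is the verification that $[a,b]$ is a reduced amalgam word, for which it is essential that both $A_1'$ and $A_2'$ are \emph{proper} in $A_1$ and $A_2$ respectively, as assumed in Proposition~\ref{prop1}; everything else in the plan is a direct appeal to residual finiteness of $G$ and the fact that elements of $A_1'$ are central in $A_2$ after the amalgamation.
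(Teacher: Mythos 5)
Your proposal is correct and follows essentially the same route as the paper: both arguments use a commutator $[a,b]$ with $b\in A_2\setminus A_2'$, invoke the Normal Form Theorem for amalgamated products to see that it is nontrivial, separate it from the identity by residual finiteness of $G$, and then use the fact that elements of $A_1'$ (identified with $A_2'$) commute with $b$ to force the commutator into the chosen finite-index normal subgroup, a contradiction. The only cosmetic difference is that you separate each $a\notin A_1'$ individually while the paper argues directly with an element of the intersection, which is logically equivalent.
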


\begin{proof}
By symmetry, it suffices to prove the first equation.
Clearly, the group $A_1^\prime$ is contained in $\bigcap_{\bar{G} \triangleleft_{fi} G} (\bar{G} \cap A_1) A_1^\prime$. Now suppose that there is an element $g\in\bigcap_{\bar{G} \triangleleft_{fi} G} (\bar{G} \cap A_1) A_1^\prime$ such that $g\notin A_1^\prime$. Pick an element $y \in A_2$ so that $y \notin A_2^\prime$ and define $x$ to be the element $[g,y]$.

Then, by the Normal Form Theorem for amalgamated free products, $x\neq 1$. Since $G$ is residually finite, there is a finite-index normal subgroup $G_0$ in $G$ such that $x \notin G_0$. 

Recall that $g \in \bigcap_{\bar{G} \triangleleft_{fi} G} (\bar{G} \cap A_1) A_1^\prime$, so in particular, $g\in (G_0 \cap A_1) A_1^\prime$ and we write
\[ g= g_0 z \quad \text{with} \quad g_0 \in (G_0 \cap A_1)\setminus \{1\}, \quad z \in A_1^\prime.\]
Since $A_1$ and $A_2$ are abelian, $z$ commutes with $g_0$ and $y$, thus \[x= [g_0 z, y]=[g_0,y].\]
The element $g_0$ lies in $G_0$ and $G_0$ is a normal subgroup of $G$. Hence, $x$ is an element of $G_0$ and this is a contradiction by our choice of $G_0$.
\end{proof}

\begin{proof}[Proof of Proposition \ref{prop1}]
By Lemma \ref{lemma1},
\[ \bigcap_{\bar{G} \triangleleft_{fi} G} (\bar{G} \cap A_1) A_1^\prime= A_1^\prime \quad \text{and} \quad \bigcap_{\bar{G} \triangleleft_{fi} G} (\bar{G} \cap A_2) A_2^\prime= A_2^\prime.\]
Thus, if $g\in A_1 \setminus A_1^\prime$, then there is $\bar{G}_0 \triangleleft_{fi} G $ such that $g\notin (\bar{G}_0 \cap A_1) A_1^\prime$. This means that $g\neq n a_1^\prime$ for all $n\in \bar{G}_0 \cap A_1$ and $a_1^\prime \in A_1^\prime$, i.e. $g {a_1^{\prime}}^{-1} \neq n$ for all $a_1^\prime \in A_1^\prime$, $n\in \bar{G}_0 \cap A_1$. Hence,
\[gA_1^\prime \cap (\bar{G}_0 \cap A_1) = \varnothing.\]
Similarly, if $h\in A_2 \setminus A_2^\prime$, there is $\tilde{G}_1 \triangleleft_{fi} G$ such that
\[hA_2^\prime \cap (\tilde{G}_1 \cap A_2) = \varnothing.\]

The group $A_1^\prime$ has finite index in $A_1$, so we may consider $\{ 1, g_1, \dots, g_{\alpha-1}\}$ a left transversal of $A_1^\prime$ in $A_1$. Similarly we consider a left transversal $\{1,h_1,\dots,h_{\beta -1}\}$ of $A_2^\prime$ in $A_2$.

Then, for $i\in \{1,\dots,\alpha-1\}$ there is $N_i \triangleleft_{fi} G$ such that \[g_i A_1^\prime \cap (N_i \cap A_1)= \varnothing,\]
and for $j\in \{1,\dots,\beta-1\}$ there is $K_j \triangleleft_{fi} G$ such that \[h_j A_2^\prime \cap (K_j \cap A_2)= \varnothing.\]
Let us define $\bar{G}$ to be
\[ \bar{G}= \bigcap_{i=1}^{\alpha-1} N_i \cap \bigcap_{j=1}^{\beta-1} K_j.\]
Then, $\bar{G}$ is normal and of finite index in $G$ because $N_i$ and $K_j$ are normal and of finite index in $\bar{G}$. It remains to check that $\bar{G} \cap A_1 < A_1^\prime$ and $\bar{G} \cap A_2 < A_2^\prime$. We show that $\bar{G} \cap A_1 < A_1^\prime$ since the other case is symmetric.

Suppose that there is $y\in \bar{G} \cap A_1$ such that $y \notin A_1^\prime$. Then $y\in g_i A_1^\prime$ for some $i\in \{1,\dots,\alpha-1\}$, so $y\in g_i A_1^\prime \cap (\bar{G}\cap A_1)$. In particular, 
\[ g_i A_1^\prime \cap (N_i \cap A_1) \neq \varnothing,\]
which is a contradiction.
\end{proof}

In the next two lemmas we use Proposition \ref{prop1} to show that residually finite rank $n$ GBS groups are virtually $\Z^n$-by-free, provided that all edge-loops in a reduced labeled graph have both labels equal to 1.

\begin{lem}\label{lemma2}
Suppose that $G$ is a residually finite rank $n$ GBS group with reduced labeled graph $\Gamma$. Then there is a covering of finite graphs of groups $(\bar{G}(-),\bar{\Gamma}) \to (G(-),\Gamma)$ such that, for each non-loop edge $e$ in $\Gamma$, all lifts of $e$ to $\bar{\Gamma}$ have both labels equal to $1$.
\end{lem}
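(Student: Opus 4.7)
The plan is to apply Proposition \ref{prop1} separately to each non-loop edge of $\Gamma$ and then intersect the resulting finite-index normal subgroups. Concretely, for a non-loop edge $e$ with endpoints $v_1, v_2$, Bass--Serre theory identifies $\langle G_{v_1}, G_{v_2}\rangle < G$ with the amalgamated product $G_{v_1} \ast_{G_e} G_{v_2}$, and since $\Gamma$ is reduced both inclusions $G_e \hookrightarrow G_{v_i}$ are proper of finite index. So Proposition \ref{prop1} yields a finite-index normal subgroup $\bar{G}_e \triangleleft G$ with $\bar{G}_e \cap G_{v_i} < G_e$ for $i = 1, 2$. Taking $\bar{G} = \bigcap_e \bar{G}_e$ over all non-loop edges of $\Gamma$ gives a finite-index normal subgroup of $G$ that satisfies these containments for every non-loop edge simultaneously.

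The next step is to realize $\bar{G}$ as the fundamental group of a covering of $(G(-), \Gamma)$. Since $\bar{G} < G$ has finite index, its action on the Bass--Serre tree $T$ gives a finite graph of groups $(\bar{G}(-), \bar{\Gamma})$ with $\bar{\Gamma} = \bar{G}\backslash T$, and the inclusion $\bar{G} \hookrightarrow G$ is induced by a covering $(\bar{G}(-), \bar{\Gamma}) \to (G(-), \Gamma)$. To check the label condition on a lift $\bar{e}$ of a non-loop edge $e$, lift $\bar{e}$ further to an edge $\tilde{e}$ in $T$; its stabilizer in $G$ is $gG_eg^{-1}$ and its endpoint stabilizers are $gG_{v_i}g^{-1}$ for a common $g \in G$. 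Using normality of $\bar{G}$, the corresponding $\bar{G}$-stabilizers become $g(\bar{G}\cap G_e)g^{-1}$ and $g(\bar{G}\cap G_{v_i})g^{-1}$. The containment $\bar{G}\cap G_{v_i} \subset G_e$ then forces the equality $\bar{G}\cap G_{v_i} = \bar{G}\cap G_e$, so after conjugation the edge stabilizer $\bar{G}_{\bar e}$ equals the vertex stabilizer $\bar{G}_{\bar{v}_i}$, making the label of $\bar{e}$ near $\bar{v}_i$ equal to $1$ for $i = 1, 2$.

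The main obstacle is this last label computation: while Proposition \ref{prop1} supplies exactly the right algebraic condition $\bar{G}\cap G_{v_i} < G_e$, the labels live on the quotient graph $\bar{G}\backslash T$, so one must be careful with the conjugating element $g$ and exploit the normality of $\bar{G}$ to identify the edge and vertex stabilizers at each lift of $e$. Once that bookkeeping is done, the containment produced by Proposition \ref{prop1} transfers cleanly to the required label condition, and no additional argument is needed for loop edges since the lemma imposes no constraint on them.
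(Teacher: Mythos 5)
Your proof is correct and takes essentially the same approach as the paper: apply Proposition \ref{prop1} to the amalgam generated by the two vertex groups adjacent to each non-loop edge, intersect the resulting finite-index normal subgroups, and use normality of $\bar{G}$ to see that at every lift of such an edge the $\bar{G}$-vertex stabilizer coincides with the $\bar{G}$-edge stabilizer, forcing both labels to equal $1$. The paper is terser about the conjugation bookkeeping in the Bass--Serre tree, but the argument is the same.
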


\begin{proof}
 Let $e$  be a non-loop edge in $\Gamma$ with labels $x,y >1$ (the labels must be greater than 1 since $\Gamma$ is reduced). The two vertex groups incident to $e$, say $A_1$ and $A_2$, generate a subgroup $H$ as in Proposition \ref{prop1}. Let $\bar{G} \triangleleft_{fi} G$  be the subgroup provided by Proposition \ref{prop1}, $\bar{\Gamma}$ its associated labeled graph and $(\bar{G}(-),\bar{\Gamma}) \to (G(-),\Gamma)$ the corresponding covering of graphs of groups. Let $\bar{e}$ be a lift of $e$ to $\bar{\Gamma}$. Note that both labels of $\bar{e}$ correspond to the indices $[A_1 \cap \bar{G} \colon A_1^\prime \cap \bar{G}]$ and $[A_2 \cap \bar{G} \colon A_2^\prime \cap \bar{G}]$. But  by Proposition \ref{prop1} the groups $A_1^\prime \cap \bar{G}$ and $A_2^\prime \cap \bar{G}$ are precisely $A_1 \cap \bar{G}$ and $A_2 \cap \bar{G}$, so the indices are equal to $1$. The lemma follows by doing this for all non-loop edges in $\Gamma$ and intersecting the subgroups $\bar{G}$.
 
\end{proof}

			
				
				
		
	

\begin{lem}\label{lemma3}
Suppose that $G$ is a residually finite rank $n$ GBS group with reduced labeled graph $\Gamma$, such that all edge-loops in $\Gamma$ have both labels equal to $1$. Then $G$ is virtually $\mathbb{Z}^n$-by-free.
\end{lem}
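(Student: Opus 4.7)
The plan is to apply Lemma \ref{lemma2} to pass to a finite-index subgroup whose labeled graph has every edge labeled $(1,1)$, and then read off the $\Z^n$-by-free structure directly from the Bass--Serre tree.

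First I would apply Lemma \ref{lemma2} to $G$, obtaining a covering of graphs of groups $(\bar{G}(-),\bar{\Gamma})\to(G(-),\Gamma)$ and a corresponding finite-index subgroup $\bar{G}<G$, such that every non-loop edge of $\Gamma$ has all its lifts in $\bar{\Gamma}$ carrying both labels $1$. Coverings of labeled graphs respect labels (Lemma \ref{lem:induceimm}), and a loop in $\bar{\Gamma}$ can only project to a loop in $\Gamma$ (the graph morphism $\delta$ sends the endpoints of $\bar{e}$ to the endpoints of $\delta(\bar{e})$). The standing hypothesis of the lemma guarantees that all loops in $\Gamma$ have both labels equal to $1$, so the same holds for all loops in $\bar{\Gamma}$. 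Combining these two observations, every edge of $\bar{\Gamma}$ has both labels equal to $1$.

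Next I would analyze the action of $\bar{G}$ on its Bass--Serre tree $\bar{T}$. For any edge $\tilde{e}$ of $\bar{T}$ projecting to an edge $\bar{e}$ of $\bar{\Gamma}$, the indices of $\bar{G}_{\tilde{e}}$ inside its two adjacent vertex stabilizers equal the labels of $\bar{e}$, and both are $1$; hence the edge stabilizer coincides with each of its adjacent vertex stabilizers. By the connectedness of $\bar{T}$, all vertex and edge stabilizers coincide in a common subgroup $K\cong\Z^n$. Since $K$ fixes $\bar{T}$ pointwise it is precisely the kernel of the action $\bar{G}\acts\bar{T}$, and so is normal in $\bar{G}$. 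The quotient $\bar{G}/K$ then acts on $\bar{T}$ without inversions and with trivial vertex stabilizers, so it acts freely and is therefore free. Thus $\bar{G}$ is $\Z^n$-by-free, and since $[G:\bar{G}]<\infty$ this forces $G$ to be virtually $\Z^n$-by-free.

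The only nontrivial point is the bookkeeping in the first paragraph: Lemma \ref{lemma2} only controls the lifts of \emph{non-loop} edges, so one must verify separately that the loop edges of $\bar{\Gamma}$ still carry labels $(1,1)$, which is exactly where the hypothesis on the loops of $\Gamma$ is used. Once this is in place, the structural conclusion about equal stabilizers is an immediate Bass--Serre observation, and the identification of $\bar{G}/K$ as a free group is standard.
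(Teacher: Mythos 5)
Your proof is correct and follows essentially the same route as the paper: pass to the finite-index subgroup $\bar{G}$ given by Lemma \ref{lemma2}, observe (using the hypothesis on loops and the fact that coverings respect labels) that every edge of $\bar{\Gamma}$ has both labels $1$, and extract the normal $\Z^n$ acting trivially on the Bass--Serre tree. The only cosmetic difference is that the paper concludes by citing Proposition \ref{prop:normalZn}, whereas you directly note that $\bar{G}/K$ acts freely on $\bar{T}$ and is therefore free; your explicit bookkeeping for edges of $\bar{\Gamma}$ lying over loops of $\Gamma$ is a detail the paper leaves implicit.
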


\begin{proof}
By Lemma \ref{lemma2}, there is a finite-index subgroup $\bar{G}$ of $G$ and a corresponding covering of graphs of groups $(\bar{G}(-),\bar{\Gamma}) \to (G(-),\Gamma)$ such that all labels in $\bar{\Gamma}$ equal $1$. Let $\bar{T}$ be the Bass-Serre tree of $\bar{G}$. The kernel $K$ of the $\bar{G}$-action on $\bar{T}$ has to be contained in any vertex stabilizer $\bar{G}_v$. As all edge labels in $\bar{\Gamma}$ are $1$, it turns out that $\bar{G}_v$ also lies in $K$. As a consequence, $K= \bar{G}_v$, so $K$ is a free abelian group of rank $n$ that is normal in $\bar{G}$ and acts trivially on $\bar{T}$. Hence, by Proposition \ref{prop:normalZn}, $\bar{G}$ is virtually $\mathbb{Z}^n$-by-free, so $G$ is also virtually $\mathbb{Z}^n$-by-free.
\end{proof}

The case where an edge-loop has a label greater than 1 requires more careful analysis.
We start with a lemma concerning rank n GBS groups with circular labeled graph, and the ratios of the labels.

\begin{figure}[H]
\centering
\begin{tikzpicture}[scale=1.3]
    \foreach \x [count=\p] in {0,...,5} {
        \node[shape=circle,fill=black, scale=0.75] (\p) at (-\x*60:2) {};};
    \foreach \x [count=\p] in {1,...,4} {
        \draw (\x*60:2.4) node {$v_\p$};};
        \draw (5*60:2.4) node {$v_{m-1}$};
                \draw (0*60:3) node {$v_0= v_{m}$};
    \draw (1) arc (0:240:2);
    \draw[dashed] (2) arc (300:240:2);
    \draw (1) arc (0:240:2);
    \draw (2) arc (300:360:2);
    \foreach \x [count= \p] in {0,...,2} {
        \draw[gray] (15+ \x*60:2.4) node[below] {$r_{\p}$};};
        \foreach \x [count=\p] in {0,...,2} {
        \draw[gray] (45+ \x*60:2.4) node[below] {$q_{\p}$};};
\foreach \x [count= \p] in {0,...,2} {
        \draw[blue] (30+ \x*60:2.4) node[below] {$e_{\p}$};};
 \draw[gray] (-15:2) node[right] {$q_{m}$};
  \draw[blue] (-30:2) node[right] {$e_{m}$};
  \draw[gray] (-45:2.1) node[right] {$r_{m}$};
   \draw[gray] (-165:2) node[left] {$r_{4}$};
  \draw[gray] (-125:2) node[left] {$q_{4}$};
    \draw[blue] (-145:2) node[left] {$e_{4}$};
\end{tikzpicture}
\caption{}\label{fig:loop}
\end{figure}

\begin{lem}\label{lemmaloop}
Let $G$ be a residually finite rank $n$ GBS group with labeled graph $\Gamma$ given by Figure \ref{fig:loop}. Assume that $r_i \neq 1$ for some $i\in \{1,\dots, m\}$ and $q_j \neq 1$ for some $j\in \{1,\dots,m\}$. 
Then
\[ \frac{q_1}{r_1}\frac{q_2}{r_2}\cdots\frac{q_{m-1}}{r_{m-1}}\frac{q_m}{r_m}=1.\]
\end{lem}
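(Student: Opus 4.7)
Proof outline:

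The plan is to use the determinant of the modular homomorphism $\det\circ\Delta\colon G\to\Q^*$, which is a well-defined group homomorphism since conjugation in $\GL(n,\Q)$ preserves $\det$.

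First, I would reduce $\Gamma$ to a reduced labeled graph by performing elementary collapses on edges with a label equal to $1$; a direct calculation tracking how adjacent labels rescale shows that each collapse preserves both $G$ and the product $\prod q_i/r_i$. If the reduced graph is a single vertex, the empty product equals $1$ and we are done. If it is a single loop, necessarily with both labels $>1$, I would pass to the label-preserving degree-$2$ cover provided by Lemma~\ref{lem:induceimm}, which corresponds to a finite-index subgroup whose labeled graph is a reduced cycle of length $2$. So we may assume $\Gamma$ is a reduced cycle of length $m'\geq 2$, in which case every edge is a non-loop, so Lemma~\ref{lemma2} produces a covering $(\bar{G}(-),\bar{\Gamma})\to(G(-),\Gamma)$ in which every label equals $1$. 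The cover $\bar{\Gamma}$ is itself a cycle, say of length $km'$, and since every edge group of $\bar{G}$ coincides with its adjacent vertex group, $\bar{G}\cong\Z^n\times\Z$ is abelian.

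Now let $t\in G$ and $\bar{t}\in\bar{G}$ be the stable letters of $\Gamma$ and $\bar{\Gamma}$ respectively. Since each vertex group is abelian, $\Delta$ is trivial on it, so $\det\circ\Delta$ vanishes on every vertex group and factors through the quotient $G\twoheadrightarrow\pi_1(\Gamma)\cong\Z$, sending $t\mapsto\pm\lambda$ where $\lambda:=\prod q_i/r_i$. Because $\bar{\Gamma}$ is a $k$-fold cover of $\Gamma$, the image of $\bar{t}$ in $\pi_1(\Gamma)$ is $k$, so $\det\Delta(\bar{t})=(\pm\lambda)^k$. On the other hand, $\bar{t}$ centralizes the finite-index subgroup $\bar{G}_{\bar{v}_0}\leq G_{v_0}$ (since $\bar{G}$ is abelian), so $\Delta(\bar{t})$ is the identity in $\Comm(G_{v_0})\cong\GL(n,\Q)$ and hence $\det\Delta(\bar{t})=1$. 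Combining gives $\lambda^k=1$, and since $\lambda>0$ we conclude $\lambda=1$.

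The main obstacle I expect is the bookkeeping of the elementary collapse step (confirming that $\prod q_i/r_i$ is preserved by checking how an adjacent edge's label picks up a factor when a label-$1$ edge is absorbed), together with carefully identifying the winding number of $\bar{t}$ in $\pi_1(\Gamma)$ as exactly $k$ via the cover.
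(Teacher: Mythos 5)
Your overall strategy (reduce to a reduced cycle, pass to a finite-index subgroup with all labels $1$, and compare $\det\circ\Delta$ computed two ways) is sound and genuinely different in flavor from the paper's argument, which instead invokes Lemma \ref{lemma3} and Proposition \ref{prop:normalZn} to produce a normal subgroup $K\cong\Z^n$ acting trivially on the Bass--Serre tree and then telescopes the indices $|G_{\tilde{v}_j}:K|=\frac{q_j}{r_j}|G_{\tilde{v}_{j-1}}:K|$ around a lifted cycle. Note that your unproved assertion that $\det\Delta(t)=\pm\lambda$ is exactly this telescoping index computation in disguise (via $|\det M|=[\Z^n:B_2]/[\Z^n:B_1]$ for $M$ carrying $B_1$ onto $B_2$), so the two proofs share their computational core; what your version buys is that the ``invariance'' side of the equation is packaged as $\det\Delta(\bar{t})=\pm1$ rather than as $|G_{\tilde{v}_m}:K|=|G_{\tilde{v}_0}:K|$.

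There are, however, two concrete errors in the middle of your argument. First, ``every edge group coincides with its adjacent vertex group'' does \emph{not} imply $\bar{G}\cong\Z^n\times\Z$ is abelian: the labels of $\bar{\Gamma}$ record only the \emph{indices} of the edge groups, so all labels equal to $1$ gives $\bar{G}\cong\Z^n\rtimes_\psi F$ with $\psi$ built from the (possibly nontrivial) identifications $\phi_{\bar{e}}\in\GL(n,\Z)$; e.g.\ $\Z^2\rtimes_{\left(\begin{smallmatrix}0&1\\1&0\end{smallmatrix}\right)}\Z$ has a single loop labeled $1,1$ and is not abelian. Consequently $\Delta(\bar{t})$ need not be the identity in $\Comm(G_{v_0})$. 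Second, $\bar{\Gamma}$ need not be a cycle: a covering of graphs of groups is not a covering of the underlying graphs (the number of edges over $e$ at $\bar{v}$ is $|G_v:\bar{G}_{\bar{v}}G_e|$, which can exceed $1$), so $\pi_1(\bar{\Gamma})$ may be free of higher rank and there is no single ``stable letter'' $\bar{t}$ of winding number $k$. Both defects are repairable: since all labels of $\bar{\Gamma}$ are $1$, all $\bar{G}$-stabilizers of vertices of $T$ coincide with a single normal $\bar{K}\cong\Z^n$, so for \emph{any} $\bar{t}\in\bar{G}$ one has $\Delta(\bar{t})\bar{K}=\bar{K}$ and hence $\det\Delta(\bar{t})=\pm1$; and because $\bar{G}$ has finite index in $G$, some $\bar{t}\in\bar{G}$ has nonzero image $k$ in $\pi_1(\Gamma)\cong\Z$, giving $\lambda^{k}=1$ and hence $\lambda=1$ as you intended. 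With these corrections, and with the $\det\Delta(t)=\pm\lambda^{\pm1}$ computation actually carried out, the proof is complete.
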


\begin{proof}
We can assume that the graph of groups is reduced since the number $\frac{q}{r}=\frac{q_1}{r_1}\frac{q_2}{r_2}\cdots\frac{q_{m-1}}{r_{m-1}}\frac{q_m}{r_m}$ is invariant under taking elementary collapses of edges. Since $r_i \neq 1$ and $q_i \neq 1$ for some $i$, we deduce that $\Gamma$ does not consist of a single edge with labels $1$ and $q\neq 1$. 
If $\Gamma$ consists of a single edge with labels $q\neq1$ and $r\neq1$, then we can take a degree 2 covering of labeled graphs $\bar{\Gamma}\to\Gamma$, and $\bar{\Gamma}$ will correspond to a finite-index subgroup of $G$ by Lemma \ref{lem:induceimm} (which is also residually finite). Then $\bar{\Gamma}$ would be a reduced labeled graph which is also a circle, and which has the ratio $\frac{q^2}{r^2}$, but since $\frac{q}{r}$ is $1$ if and only if $\frac{q^2}{r^2}$ is $1$, we can reduce to the case where $\Gamma$ is a reduced labeled graph with at least two edges (so no edge-loops).
Hence, by Lemma \ref{lemma3}, $G$ is virtually $\mathbb{Z}^n$-by-free.

Let $T$ be the Bass--Serre tree for $G$.
Let $\tilde{v}_0,\tilde{e}_1,\tilde{v}_1,\tilde{e}_2,\dots, \tilde{e}_{m},\tilde{v}_m$ be a lift to $T$ of the path $v_0,e_1,v_1,e_2,\dots,e_m,v_m$ in $\Gamma$, and let $\Z^n\cong K\triangleleft G$ be the normal subgroup given by Proposition \ref{prop:normalZn} that acts trivially on $T$. 
Then, $K$ is a subgroup of $G_{\tilde{e}_i}$ for all $i\in \{1,\dots, m\}$ and
\[ |G_{\tilde{v}_j} \colon K|= r_{j+1} |G_{\tilde{e}_{j+1}}\colon K| \quad \text{for} \quad j\in \{0,\dots,m-1\} \quad \text{and}\]
\[ |G_{\tilde{v}_l} \colon K|= q_{l} |G_{\tilde{e}_{l}}\colon K| \quad \text{for} \quad l\in \{1,\dots,m\}.\]
Hence, \[|G_{\tilde{v}_j} \colon K |= \frac{q_j}{r_j}|G_{\tilde{v}_{j-1}} \colon K |\] for each $j\in \{1,\dots,m\}$. Observe that $\tilde{v}_m= g \cdot \tilde{v}_0$ for some $g\in G$, so $G_{\tilde{v}_m}= G_{\tilde{v}_0}^{g}$. 
Since $K$ is normal in $G$, we get that $|G_{\tilde{v}_m} \colon K|=|G_{\tilde{v}_0} \colon K|$. In conclusion,
\[ |G_{\tilde{v}_0} \colon K |= |G_{\tilde{v}_m} \colon K | =\frac{q_m}{r_m}\frac{q_{m-1}}{r_{m-1}}\cdots\frac{q_1}{r_1}|G_{\tilde{v}_0} \colon K |,\]
and the lemma follows.
\end{proof}
\begin{figure}[H]
	\centering
		\begin{tikzpicture}[auto,node distance=2cm,
			thick,every node/.style={},
			every loop/.style={looseness=40},
			]
			\tikzstyle{label}=[draw=none,font=\large]
			\tikzstyle{node}=[circle,fill,draw,font=\small]
			
			\begin{scope}
				\node[node] (1) {};				
				
				\path
				(1) edge [in=225,out=135,loop] (1)
				(1) edge [in=45,out=-45,loop](1);
				
				\node[label] at (-1.5,1.3) {$q$};
				\node[label] at (-1.5,-1.3) {$1$};
				\node[label] at (1.5,1.3) {$x$};
				\node[label] at (1.5,-1.3) {$y$};
			\end{scope}
		
		\begin{scope}[shift={(6,0)}]
			\node[node] (1) {};	
			\node[node] (2) at (2,0){};			
			
			\path
			(1) edge [in=225,out=135,loop] (1)
			(1) edge (2);
			
			\node[label] at (-1.5,1.3) {$q$};
			\node[label] at (-1.5,-1.3) {$1$};
			\node[label] at (.5,.3) {$r$};
			\node[label] at (1.5,.3) {$s$};	
	\path		
(2) edge [in=45,out=-45,loop](2);

\node[label] at (3.5,1.3) {$x$};
\node[label] at (3.5,-1.3) {$y$};
		\end{scope}
			
		\end{tikzpicture}
	
	\caption{}\label{fig:doubleloop}
\end{figure}

\begin{lem}\label{lemmadoubleloop}
Suppose that $G$ is a rank $n$ GBS group with reduced labeled graph $\Gamma$ given by one of the two graphs in Figure \ref{fig:doubleloop}, with $q\neq 1$. Then $G$ is not residually finite.    
\end{lem}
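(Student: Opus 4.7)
The plan is to apply Lemma \ref{lemmaloop} to rank $n$ GBS subgroups of $G$ obtained via immersions of labeled graphs. By Lemma \ref{lem:induceimm}, any label-preserving immersion $\bar\Gamma \to \Gamma$ yields a subgroup $\bar G < G$ that is itself a rank $n$ GBS group with labeled graph $\bar\Gamma$, and residual finiteness passes to subgroups. So assuming $G$ is residually finite, it would suffice to exhibit an immersed circular $\bar\Gamma$ satisfying the non-triviality hypothesis of Lemma \ref{lemmaloop} but with $\prod q_i / r_i \neq 1$, yielding the desired contradiction.

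For the first graph (one vertex $v$ with loops $e_1$ labelled $(q,1)$ and $e_2$ labelled $(x,y)$), I would take $\bar\Gamma$ to be the 5-cycle corresponding to the sequence of traversals: $e_1$ forward, $e_1$ forward, $e_2$ forward, $e_1$ backward, $e_2$ backward. At each vertex of $\bar\Gamma$ the immersion condition requires that the two adjacent half-edges map to distinct half-edges of $\Gamma$; the only potential obstruction is a consecutive pair of same-loop traversals in opposite orientations (which would force the inner half-edges to coincide), and the chosen sequence avoids this. The product of ratios evaluates to $(1/q)(1/q)(y/x)(q)(x/y) = 1/q$, and the hypothesis of Lemma \ref{lemmaloop} is met since $r_1 = q \neq 1$ (first edge) and $q_4 = q \neq 1$ (the backward $e_1$). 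The lemma then gives $1/q = 1$, contradicting $q \neq 1$.

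For the second graph (vertices $v_1, v_2$, loops at $v_1, v_2$ labelled $(q,1)$ and $(x,y)$ respectively, and an edge $e$ between them with labels $(r,s)$, where $r,s \neq 1$ by reducedness), I would construct two immersed 4-cycles. The first traverses the $v_1$-loop forward, then $e$ (from $v_1$ to $v_2$), then the $v_2$-loop forward, then $e$ in reverse; the second is identical except the $v_2$-loop is traversed backward. Immersion conditions are immediate since consecutive edges of each cycle always map to distinct edges of $\Gamma$. The hypothesis of Lemma \ref{lemmaloop} is met in both cases because $r_1 = q \neq 1$ and $q_2 = s \neq 1$. The products evaluate to $y/(qx)$ and $x/(qy)$, yielding $y = qx$ and $x = qy$; combining these gives $q^2 = 1$, so $q = 1$ (as $q$ is a positive integer), contradicting $q \neq 1$.

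The main care needed is in the verification of the immersion conditions. For the second graph this is essentially automatic due to reducedness, but for the first graph one has to design the cycle so that no same-loop pair appears with opposite orientations in consecutive positions — which is exactly why the 5-cycle above separates the two $e_1$-backward/$e_2$-backward stretches with traversals of the other loop.
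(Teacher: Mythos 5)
Your proof is correct and follows essentially the same strategy as the paper: both arguments immerse labeled circles into $\Gamma$ via Lemma \ref{lem:induceimm} and then invoke Lemma \ref{lemmaloop}, and your 5-cycle for the first graph is exactly the paper's first labeled graph in Figure \ref{fig:rectangle}. The only (harmless) difference is in the second case, where the paper uses a single longer immersed circle whose ratio product is $q$, while you use two immersed 4-cycles and combine the two resulting equations to get $q^2=1$.
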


\begin{proof}
Suppose that $G$ is residually finite.
Let $\bar{\Gamma}$ be one of the labeled graphs shown in Figure \ref{fig:rectangle}. There is an obvious immersion $\phi:\bar{\Gamma} \to \Gamma$, respecting the labels, which induces an immersion of graphs of groups by Lemma \ref{lem:induceimm}. 
As a result, there is a subgroup $\bar{G}<G$, which is a rank $n$ GBS group with associated labeled graph $\bar{\Gamma}$. We are assuming that $G$ is residually finite, so $\bar{G}$ is also residually finite.
Then, from Lemma \ref{lemmaloop} we deduce that $q=1$, a contradiction.

\end{proof}

\begin{figure}[H]
	\centering
		\begin{tikzpicture}[auto,node distance=2cm,
			thick,every node/.style={},
			every loop/.style={looseness=40},
			]
			\tikzstyle{label}=[draw=none,font=\large]
			\tikzstyle{node}=[circle,fill,draw,font=\small]
			
			\node[node] (1) {};
			\node[node] at (2,0){};
			\node[node] at (4,0){};
			\node[node] at (4,-2){};
			\node[node] at (0,-2){};

			\draw
			(0,0)--(4,0)--(4,-2)--(0,-2)--(0,0);
			
			\node[label] at (.5,.3) {$1$};
			\node[label] at (1.5,.3) {$q$};
			\node[label] at (2.5,.3) {$1$};
			\node[label] at (3.5,.3) {$q$};
			\node[label] at (.5,-2.3) {$1$};
			\node[label] at (3.5,-2.3) {$q$};
			\node[label] at (-.3,-1.5) {$x$};
			\node[label] at (-.3,-.5) {$y$};
			\node[label] at (4.3,-1.5) {$x$};
			\node[label] at (4.3,-.5) {$y$};	
			
			\begin{scope}[shift={(7,0)}]
\node[node] (1) {};
\node[node] at (2,0){};
\node[node] at (4,0){};
\node[node] at (4,-2){};
\node[node] at (0,-2){};
\node[node] at (4,-4){};
\node[node] at (0,-4){};
\node[node] at (4,-6){};
\node[node] at (0,-6){};

\draw
(0,0)--(4,0)--(4,-6)--(0,-6)--(0,0);

\node[label] at (.5,.3) {$1$};
\node[label] at (1.5,.3) {$q$};
\node[label] at (2.5,.3) {$1$};
\node[label] at (3.5,.3) {$q$};
\node[label] at (.5,-6.3) {$1$};
\node[label] at (3.5,-6.3) {$q$};
\node[label] at (-.3,-3.5) {$x$};
\node[label] at (-.3,-2.5) {$y$};
\node[label] at (4.3,-3.5) {$x$};
\node[label] at (4.3,-2.5) {$y$};
\node[label] at (-.3,-.5) {$r$};
\node[label] at (-.3,-1.5) {$s$};
\node[label] at (-.3,-4.5) {$s$};
\node[label] at (-.3,-5.5) {$r$};
\node[label] at (4.3,-.5) {$r$};
\node[label] at (4.3,-1.5) {$s$};
\node[label] at (4.3,-4.5) {$s$};
\node[label] at (4.3,-5.5) {$r$};
			\end{scope}	
		\end{tikzpicture}
	\caption{}\label{fig:rectangle}
\end{figure}

\begin{figure}[H]
	\centering
	\begin{tikzpicture}[auto,node distance=2cm,
		thick,every node/.style={},
		every loop/.style={looseness=40},
		]
		\tikzstyle{label}=[draw=none,font=\large]
		\tikzstyle{node}=[circle,fill,draw,font=\small]
		
		\node[node] (1) {};	
		\node[node] (2) at (2,0){};			
		
		\path
		(1) edge [in=225,out=135,loop] (1)
		(1) edge (2);
		
		\node[label] at (-1.5,1.3) {$q$};
		\node[label] at (-1.5,-1.3) {$1$};
        \node[label, gray] at (-2.5,0) {$e_1$};
        \node[label, gray] at (1,-0.3) {$e_2$};
        \node[label, gray] at (0,-0.4) {$v_1$};
        \node[label, gray] at (2,-0.4) {$v_2$};
		\node[label] at (.5,.3) {$r$};
		\node[label] at (1.5,.3) {$s$};			
	\end{tikzpicture}
	
	\caption{}\label{fig:loopandedge}
\end{figure}

\begin{lem}\label{lem:loopandedge}
	Suppose that $G$ is a rank $n$ GBS group with reduced labeled graph $\Gamma$ as in Figure \ref{fig:loopandedge}, with $q\neq 1$.
	Then $G$ is not residually finite.
\end{lem}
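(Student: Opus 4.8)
The plan is to argue by contradiction: assume $G$ is residually finite and produce, inside a finite-index subgroup of $G$, a configuration already shown to be non-residually-finite, namely a double loop as in Lemma \ref{lemmadoubleloop}. The obstruction to doing this directly in $\Gamma$ is twofold: the vertex $v_2$ is terminal, and the loop $e_1$ is \emph{ascending} (one of its two labels is exactly $1$, and this persists under passage to subgroups). Consequently every circuit that can be immersed into $\Gamma$ runs around $e_1$ only, is ascending, and violates neither Lemma \ref{lemmaloop} nor Lemma \ref{lemmadoubleloop}. The entire point of the argument is to break this ascending behaviour by passing to a cover in which the terminal edge $e_2$ becomes usable.

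First I would apply Proposition \ref{prop1} to the subgroup generated by the two vertex groups $A_1, A_2$ of the edge $e_2$; these form an amalgam $A_1 \ast_\varphi A_2$ with both indices $r,s>1$, since $\Gamma$ is reduced. This yields a finite-index normal subgroup $\bar G \triangleleft G$, again residually finite, with an associated covering $(\bar G(-),\bar\Gamma)\to(G(-),\Gamma)$ in which every lift of $e_2$ has both labels equal to $1$. The crucial gain is that the lifts of the previously terminal vertex $v_2$ now have valence $s\geq 2$: they are no longer terminal, and the label-$1$ lifts of $e_2$ provide passages between distinct lifts of $v_1$ that do not change the modular level.

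The second step is to exhibit a double loop in $\bar\Gamma$. Since the $\tau$-side of $e_1$ has label $1$, every lift of $e_1$ retains a label $1$, so after performing the (legitimate) elementary collapses of the trivialized lifts of $e_2$ the lifts of $e_1$ survive as loops or as edges still carrying the modular scaling $q$. Using that some lift of $v_2$ now joins two of its incident edges back to a common lift of $v_1$, the collapse produces a single vertex bearing two loops, one of which inherits a nontrivial label coming from $q$. Realizing this double loop as an immersed subgroup via Lemma \ref{lem:induceimm} and applying Lemma \ref{lemmadoubleloop} forces $q=1$, a contradiction. (Should the collapse instead leave a reduced embedded cycle of length $\geq 2$, all of whose labels are then $\neq 1$, one applies Lemma \ref{lemmaloop} to that cycle; its product of ratios is a nontrivial power of $q$, again a contradiction.)

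The hard part is precisely this second step: one must guarantee that, after trivializing $e_2$ and collapsing, a genuine \emph{second} loop appears at a vertex that also carries the scaling of $e_1$, with the relevant label remaining $\neq 1$. This requires tracking how the lifts of the ascending loop $e_1$ behave in $\bar G$ — in particular using that the modular homomorphism of $\bar G$ remains nontrivial (a power of the stable letter of $e_1$ lies in $\bar G$ and scales by a power of $q$), so the scaling cannot be collapsed away — together with a first-Betti-number count of $\bar\Gamma$ ensuring that enough independent cycles are present for the double loop, equivalently a length-$\geq 2$ cycle of nontrivial ratio, to occur.
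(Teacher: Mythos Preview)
Your strategy is different from the paper's, and while the ingredients you list (Proposition~\ref{prop1}, the modular homomorphism, a Betti-number count) can indeed be assembled into a valid argument, your write-up stops short of actually doing so. Step 2 is admitted to be ``the hard part'' and is left as a heuristic. Two concrete points would need to be pinned down. First, you need the explicit count: with $\bar G\cap A_i<A_i'$ one has exactly $r$ lifts of $e_2$ at each lift of $v_1$ and $s$ at each lift of $v_2$, while $E_1=V_1$ (each lift of $e_1$ still has a label $1$); this gives $b_1(\bar\Gamma)=rV_1(s-1)/s+1\ge 2$. Second, since every edge of $\bar\Gamma$ carries a label $1$, the \emph{fully reduced} graph is necessarily a single vertex with $b_1(\bar\Gamma)\ge 2$ loops, so your ``alternative'' cycle-of-length-$\ge 2$ case never occurs; one then invokes the modular homomorphism to see that not all of these loops can have both labels $1$, producing the double-loop picture of Lemma~\ref{lemmadoubleloop}. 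Your sentence about ``some lift of $v_2$ joining two edges back to a common lift of $v_1$'' is not what drives the argument, and the claim that a surviving cycle has ratio ``a nontrivial power of $q$'' is not justified as stated.

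The paper bypasses all of this with a one-line construction: it writes down a three-vertex graph of groups $\bar\Gamma$ (two copies of the loop-plus-edge glued at the terminal vertex) together with a map $\delta:\bar\Gamma\to\Gamma$ sending $w_1,w_3\mapsto v_1$ and $w_2\mapsto v_2$. This is \emph{not} an immersion of graphs at $w_2$, but by declaring $\bar G_{w_2}:=G_{e_2}<G_{v_2}$ it becomes an immersion of \emph{graphs of groups}, since $|G_{v_2}:\delta(\bar G_{w_2})\,\phi_{e_2}(G_{e_2})|=s\ge 2$ accommodates the two incident edges. One elementary collapse then yields the right-hand picture of Figure~\ref{fig:doubleloop}, and Lemma~\ref{lemmadoubleloop} finishes. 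So the paper exploits the flexibility of Bass's immersion notion (vertex groups may be proper subgroups) rather than passing to a cover; your route stays within label-preserving covers at the cost of a longer combinatorial analysis.
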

\begin{proof}
Since $\Gamma$ is reduced, we have $s \neq 1$.
Let $\bar{\Gamma}$ be the labeled graph shown in Figure \ref{fig:doubleloopbar}. Then there is an obvious map $\phi \colon \bar{\Gamma} \to \Gamma$, with $\phi(w_1)=\phi(w_3)=v_1$ and $\phi(w_2)=v_2$. This map respects the labels at $w_1$ and $w_3$ but not at $w_2$. 

We extend this to an immersion of graphs of groups $\phi:(\bar{G}(-),\bar{\Gamma}) \to (G(-),\Gamma)$ as follows.
The construction is similar to Lemma \ref{lem:induceimm}, but with a modification at the vertex group $\bar{G}_{w_2}$.
Define $\bar{G}_{w_2}=G_{e_2}<G_{v_2}$ and let $\phi_{w_2}$ be the inclusion $\bar{G}_{w_2}\hookrightarrow G_{v_2}$. Define all other vertex and edge groups in $(\bar{G}(-),\bar{\Gamma})$ to equal the corresponding vertex and edge groups in $(G(-),\Gamma)$ (and let the maps $\phi_{\bar{v}}, \phi_{\bar{e}}$ be identity maps).
Similarly, let all the maps $\bar{G}_{\bar{e}}\hookrightarrow\bar{G}_{\bar{\iota}\bar{e}}$ and $\rho_{\bar{e}}:\bar{G}_{\bar{e}}\hookrightarrow\bar{G}_{\bar{\tau}\bar{e}}$ 
be the inclusions induced by the corresponding inclusions in $(G(-),\Gamma)$.    
Suppose $e_2$ is oriented so that $\iota e_2=v_2$.
Take $\delta_{\iota,e''_2}$ to be some element of $G_{v_2}\setminus G_{e_2}$. Let all other elements $\delta_{\iota,\bar{e}}$ and $\delta_{\tau,\bar{e}}$ be trivial.

It is clear that the above data defines a morphism $\phi:(\bar{G}(-),\bar{\Gamma}) \to (G(-),\Gamma)$, and it is also clear that it satisfies properties (i) and (ii) of being an immersion (see Subsection \ref{subsec:BassSerre}). It remains to check condition (iii) of being an immersion.
By construction, the map $\{\delta_{\iota,e'_2},\delta_{\iota,e''_2}\}\to G_{v_2}/\phi_{w_2}(\bar{G}_{w_2})G_{e_2}=G_{v_2}/G_{e_2}$ is injective (remember $s\neq1$). All other instances of the maps from (\ref{immersioniota}) and (\ref{immersiontau}) are injective because their domains are singletons.
Thus $\phi$ satisfies condition (iii) of being an immersion.

As a result, we obtain a subgroup $\bar{G}<G$ with associated labeled graph $\bar{\Gamma}$. After applying an elementary collapse to $\bar{\Gamma}$, we obtain the right-hand graph from Figure \ref{fig:doubleloop}, so Lemma \ref{lemmadoubleloop} implies that $\bar{G}$ is not residually finite. Hence $G$ is not residually finite.

\end{proof}
\begin{figure}[H]
	\centering
	\begin{tikzpicture}[auto,node distance=2cm,
		thick,every node/.style={},
		every loop/.style={looseness=40},
		]
		\tikzstyle{label}=[draw=none,font=\large]
		\tikzstyle{node}=[circle,fill,draw,font=\small]
		
			\node[node] (1) {};	
			\node[node] (2) at (2,0){};	
			\node[node] (3) at (4,0){};		
			
			\path
			(1) edge [in=225,out=135,loop] (1)
			(1) edge (2)
			(2) edge (3);
			
			\node[label] at (-1.5,1.3) {$q$};
			\node[label] at (-1.5,-1.3) {$1$};
			\node[label] at (.5,.3) {$r$};
			\node[label] at (1.5,.3) {$1$};	
			\node[label] at (2.5,.3) {$1$};
			\node[label] at (3.5,.3) {$r$};
   \node[label, gray] at (0,-0.4) {$w_1$};
   \node[label, gray] at (2,-0.4) {$w_2$};
   \node[label, gray] at (4,-0.4) {$w_3$};
			\path		
			(3) edge [in=45,out=-45,loop](3);
			
			\node[label] at (5.5,1.3) {$q$};
			\node[label] at (5.5,-1.3) {$1$};

            \node[label, gray] at (-2.5,0) {$e'_1$};
            \node[label, gray] at (6.5,0) {$e''_1$};
            \node[label, gray] at (1,-0.3) {$e'_2$};
            \node[label, gray] at (3,-0.3) {$e''_2$};
		
	\end{tikzpicture}
	
	\caption{}\label{fig:doubleloopbar}
\end{figure}

\begin{proof}[Proof of Theorem \ref{mainresult}]
The ``if'' direction is clear since ascending HNN extensions of free abelian groups and (free abelian)-by-free groups are residually finite, see \cite{Hall49} and \cite{Baumslag71}, respectively.

Conversely, let $G$ be a residually finite rank $n$ GBS group with reduced labeled graph $\Gamma$.
First suppose that $\Gamma$ contains an edge-loop $e$ with labels $1,q$, where $q\neq1$. If $e$ is the only edge in $\Gamma$ then $G$ is an ascending HNN extension as in the statement.
Otherwise, $\Gamma$ contains a subgraph isomorphic to either the first graph in Figure \ref{fig:doubleloop} or the graph in Figure \ref{fig:loopandedge}. In this case, the subgraph corresponds to a non-residually-finite subgroup of $G$ by Lemmas \ref{lem:induceimm}, \ref{lemmadoubleloop} and \ref{lem:loopandedge}, contradicting residual finiteness of $G$.

Now suppose that $\Gamma$ has no edge-loop $e$ with labels $1,q$, where $q\neq 1$. 
Let $\bar{\Gamma}\to \Gamma$ be a covering of finite labeled graphs, and such that an edge of $\bar{\Gamma}$ is a loop if and only if both its labels equal $1$. By Lemma \ref{lem:induceimm}, we obtain a finite-index subgroup  $\bar{G}<G$ with associated labeled graph $\bar{\Gamma}$. If an edge $\bar{e}$ in $\bar{\Gamma}$ has a label of $1$ then it must map to an edge $e$ in $\Gamma$ with a label of $1$; then $e$ must be a loop since $\Gamma$ is reduced, and by our assumption on $\Gamma$, the other label of $e$ must also equal $1$; but then both labels on $\bar{e}$ are $1$, so $\bar{e}$ is a loop. Hence  $\bar{\Gamma}$ is reduced. Finally, we apply Lemma \ref{lemma3} to $\bar{G}$ to deduce that $\bar{G}$ is virtually $\mathbb{Z}^n$-by-free, hence $G$ is as well.

\end{proof}

\section{Subgroup separability}\label{subsep}

The aim of this section is to characterize subgroup separable rank $n$ GBS groups. For that, we use the following result about subgroup separability of finite graphs of groups.

\begin{thm}\cite[Theorem 2]{RaptisVarsos96}\label{thm:RaptisVarsos}
Let $G$ be the fundamental group of a finite graph of groups, where each vertex group $G_v$ is a finitely generated torsion free nilpotent group. Suppose that every edge group $G_e$ is of finite index in the corresponding vertex groups. The following propositions are equivalent:
\begin{itemize}
    \item[i)] The group $G$ is subgroup separable.
    \item[ii)] For each vertex group $G_v$ we have that $\bigcap_{x\in G} x G_v x^{-1}$ has finite index in $G_v$.
\end{itemize}
\end{thm}

\begin{thm}\label{thm:subgpsep}
Let $G$ be a rank $n$ GBS group. Then $G$ is subgroup separable if and only if $G$ is virtually $\mathbb{Z}^n$-by-free.
\end{thm}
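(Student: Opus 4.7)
The plan is to prove both implications separately, following the path suggested by the citations to Allenby--Gregorac and Raptis--Varsos.

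For the $(\Leftarrow)$ direction, if $G$ contains a finite-index subgroup $G'\cong\Z^n\rtimes F$ with $F$ a finitely generated free group, then $\Z^n$ is finitely generated abelian (hence polycyclic and so ERF) and $F$ is subgroup separable by M.~Hall's theorem, so Theorem \ref{thm:semidirect} yields that $G'$ is subgroup separable. Since subgroup separability is preserved by passing to finite-index overgroups (a standard commensurability argument), $G$ itself is subgroup separable.

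For the $(\Rightarrow)$ direction, subgroup separability implies residual finiteness, so Theorem \ref{mainresult} leaves only the case $G\cong\Z^n\ast_\varphi$ with $\varphi$ non-surjective (if $\varphi$ is an automorphism, then $G\cong\Z^n\rtimes_\varphi\Z$ is already $\Z^n$-by-free, even $\Z^n$-by-$\Z$). In this remaining case I will derive a contradiction by showing that $A:=\Z^n$ itself fails to be separable in $G$. Pick $a\in A\setminus\varphi(A)$ and set $b=t^{-1}at$; Britton's lemma guarantees $b\notin A$ since the potential pinch $t^{-1}\cdot a\cdot t$ is obstructed by $a\notin\varphi(A)$. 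The crucial claim is that $b\in AN$ for \emph{every} finite-index normal subgroup $N\triangleleft G$: in the finite quotient $G/N$, conjugation by $t$ injects $AN/N$ into $(tAt^{-1})N/N=\varphi(A)N/N\subseteq AN/N$, so equality of cardinalities forces $\varphi(A)N=AN$; writing $a=\varphi(c)n$ with $c\in A$ and $n\in N$, normality of $N$ gives $b=t^{-1}at=c\cdot(t^{-1}nt)\in A\cdot N$. Hence no finite-index subgroup of $G$ containing $A$ can exclude $b$, contradicting separability of $A$.

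The main obstacle is this non-separability argument in the ascending case; the essential point is the symmetry that in any finite quotient, conjugation by $t$ equates $AN/N$ with $\varphi(A)N/N$, so a proper ascending HNN extension over $\Z^n$ cannot distinguish $A$ from $tAt^{-1}$ modulo any finite-index normal subgroup, which is precisely what obstructs the separability of the base group.
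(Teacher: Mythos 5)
Your proof is correct and follows the same overall skeleton as the paper: the converse direction is identical (apply Allenby--Gregorac's Theorem \ref{thm:semidirect} to $\Z^n\rtimes F$, with $\Z^n$ ERF and $F$ subgroup separable, then use that subgroup separability passes to finite-index overgroups), and the forward direction likewise reduces via Theorem \ref{mainresult} to ruling out the strictly ascending HNN case. The one genuine difference is how that case is handled: the paper simply cites \cite[Proposition 1]{RaptisVarsos96} for the fact that strictly ascending HNN extensions are not subgroup separable, whereas you prove the needed statement directly, showing that the finitely generated subgroup $A$ is not separable because conjugation by $t$ in any finite quotient forces $\varphi(A)N=AN$ for every finite-index normal $N\triangleleft G$, hence $t^{-1}at\in AN$ for all such $N$, while $t^{-1}at\notin A$ by Britton's lemma when $a\in A\setminus\varphi(A)$. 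Your inline argument is sound (and is essentially the fact the paper alludes to in the introduction, that $A$ is separable in $G$ if and only if $\varphi(A)=A$); the paper's citation buys brevity, your version buys a self-contained proof. You also make explicit the observation, left implicit in the paper, that an ascending HNN extension with $\varphi$ surjective is already $\Z^n$-by-$\Z$ and hence causes no trouble.
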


\begin{proof}
Suppose that $G$ is subgroup separable. Then, $G$ is in particular residually finite, so by Theorem \ref{mainresult}, $G$ is either an ascending HNN extension with free abelian vertex group or virtually $\mathbb{Z}^n$-by-free. But by \cite[Proposition 1]{RaptisVarsos96} strictly ascending HNN extensions are not subgroup separable. Hence, $G$ has to be virtually $\mathbb{Z}^n$-by-free.

Since subgroup separability passes to finite extensions, it remains to show that if $G$ is $\mathbb{Z}^n$-by-free, then $G$ is subgroup separable. By Proposition \ref{prop:normalZn}, there is a normal subgroup $K$ that acts trivially on the Bass-Serre tree $T$ associated to $G$ and has finite index in every vertex group. In particular, the group $K$ is contained in $x G_v x^{-1}$ for each vertex group $G_v$ and $x\in G$, so condition ii) from Theorem \ref{thm:RaptisVarsos} is satisfied and we get that $G$ is indeed subgroup separable.
\end{proof}

\section{Cyclic subgroup separability of ascending HNN extensions}\label{sec:cycsubsep}

Note that since subgroup separability implies cyclic subgroup separability, and this further implies residual finiteness, from Theorem \ref{mainresult} and Theorem \ref{thm:subgpsep} we deduce that the only case that remains to consider is when $G$ is an ascending HNN extension. More concretely, in this section we determine when the following group is cyclic subgroup separable

\[G=A*_\varphi=\langle A,t\mid tat^{-1}=\varphi(a), a\in A\rangle,\]
where $A\cong\Z^n$ and $\varphi\colon A\to A$ is a monomorphism.
Later in this section we will analyze the map $\varphi$ from the perspective of linear algebra, so we will use additive notation for the subgroup $A$. However, we will use multiplicative notation when multiplying by an element that is not in $A$ (such as a power of the stable letter $t$).

We start with a lemma that characterizes finite-index normal subgroups of $G$.

\begin{lem}\label{lem:GbarK}
    \begin{enumerate}
        \item\label{item:barG} If $\bar{G}\triangleleft G$ is a finite-index normal subgroup, then $K:=\bar{G}\cap A$ is a finite-index subgroup of $A$ such that $\varphi(K)<K$ and $a-\varphi^r(a)\in K$ for all $a\in A$, where $t^r$ is the least positive power of $t$ contained in $\bar{G}$.
        \item\label{item:K} If $K< A$ is a finite-index subgroup and $0< r\in\Z$ such that $\varphi(K)<K$ and $a-\varphi^r(a)\in K$ for all $a\in A$, then $\bar{G}:=\langle K,t^r\rangle$ is a finite-index normal subgroup of $G$, and $\bar{G}\cap A=K$.
    \end{enumerate}
\end{lem}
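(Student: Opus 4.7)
For part \ref{item:barG}, each conclusion will follow from a short computation using normality. We take $K:=\bar G\cap A$, which has finite index in $A$ since $\bar G$ has finite index in $G$. To produce $r$, consider the homomorphism $\pi\colon G\to\Z$ killing $A$ with $\pi(t)=1$; then $\pi(\bar G)$ is a finite-index subgroup of $\Z$, hence of the form $r\Z$, so $r$ is the smallest positive integer with $t^r\in\bar G$. For $k\in K$, the element $\varphi(k)=tkt^{-1}$ lies in $\bar G\cap A=K$ by normality, giving $\varphi(K)\subset K$. For $a\in A$, the commutator $[a,t^r]=at^ra^{-1}t^{-r}$ lies in $\bar G$ (since $t^r\in\bar G$ and $\bar G$ is normal) and in $A$, hence in $K$; a direct computation shows this commutator equals $a-\varphi^r(a)$ in additive notation.

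For part \ref{item:K}, the plan is to realise $\bar G:=\langle K,t^r\rangle$ as the kernel of an explicit homomorphism to a finite group. The hypotheses say precisely that $\varphi$ descends to an endomorphism $\bar\varphi$ of the finite abelian group $A/K$ satisfying $\bar\varphi^r=\mathrm{id}$, so $\bar\varphi\in\mathrm{Aut}(A/K)$ has order dividing $r$. Form the finite group $F:=(A/K)\rtimes_{\bar\varphi}(\Z/r\Z)$ and define $\psi\colon G\to F$ by $\psi(a)=(a+K,0)$ for $a\in A$ and $\psi(t)=(0,1)$; the defining relation $tat^{-1}=\varphi(a)$ is respected, so $\psi$ is a well-defined homomorphism. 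Since $K$ and $t^r$ both map to $0$, we have $\bar G\subset\ker\psi$; once the reverse inclusion is proved, $\bar G=\ker\psi$ is automatically normal of finite index, and the equality $\bar G\cap A=K$ follows because $\psi(a)=0\Leftrightarrow a\in K$.

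The main obstacle will be proving $\ker\psi\subset\bar G$. Using the standard normal form for ascending HNN extensions, any $g\in G$ can be written as $g=t^{-i}at^j$ with $a\in A$ and $i,j\geq 0$ (push $t$'s to the right and $t^{-1}$'s to the left via $ta=\varphi(a)t$ and $at^{-1}=t^{-1}\varphi(a)$). A short calculation in $F$ gives $\psi(g)=(\bar\varphi^{-i}(a+K),\,j-i)$, so $g\in\ker\psi$ forces $a\in K$ and $j-i\in r\Z$. Thus $g=(t^{-i}at^i)\cdot t^{j-i}$ with the second factor in $\langle t^r\rangle\subset\bar G$, and it remains to show $t^{-i}kt^i\in\bar G$ for all $k\in K$ and $i\geq 0$. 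Iterating the identity $k=t^{-1}\varphi(k)t$ yields
\[t^{-i}kt^i=t^{-(i+n)}\varphi^n(k)t^{i+n}\qquad\text{for all }n\geq 0,\]
and choosing $n$ so that $i+n\in r\Z$ displays the left-hand side as a product of elements of $\bar G$, using $t^{\pm(i+n)}\in\langle t^r\rangle\subset\bar G$ and $\varphi^n(k)\in K$ (from $\varphi(K)\subset K$), completing the argument.
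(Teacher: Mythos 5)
Your proof is correct. Part \ref{item:barG} is essentially the paper's argument: the paper also gets $\varphi(K)<K$ from $tKt^{-1}=\varphi(K)$ and normality, and obtains $a-\varphi^r(a)\in K$ from $(a-\varphi^r(a))t^r=at^r(-a)\in\bar{G}$, which is the same computation as your commutator $[a,t^r]$. For part \ref{item:K} you take a genuinely different route: the paper verifies directly that $\bar{G}=\langle K,t^r\rangle$ is normal (by conjugating the generators by $t^{\pm1}$ and by elements of $A$), shows finite index via the decomposition $G=\langle t^{-1}\rangle A\langle t\rangle$, and then proves $\bar{G}\cap A=K$ by putting an arbitrary element of $\bar{G}$ in the form $t^{-ir}kt^{jr}$ and using a telescoping sum $a=\sum(\varphi^{sr}(a)-\varphi^{(s+1)r}(a))+\varphi^{ir}(a)$. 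You instead realize $\bar{G}$ as the kernel of an explicit map $\psi\colon G\to(A/K)\rtimes_{\bar\varphi}(\Z/r\Z)$, which delivers normality, finite index and $\bar{G}\cap A=K$ simultaneously once $\ker\psi\subseteq\bar{G}$ is established; your verification that $\bar\varphi$ is an automorphism of order dividing $r$ (so the semidirect product makes sense) and your computation $\psi(t^{-i}at^j)=(\bar\varphi^{-i}(a+K),j-i)$ are both sound, and the remaining step $t^{-i}kt^i=t^{-(i+n)}\varphi^n(k)t^{i+n}\in\bar{G}$ plays the role that the paper's normal-form/telescoping argument plays. The quotient-group construction is arguably cleaner in that it packages the three conclusions into one kernel statement, while the paper's hands-on verification avoids introducing the auxiliary finite group; both rest on the same normal form $t^{-i}at^j$ for elements of an ascending HNN extension, which the paper also uses elsewhere.
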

\begin{proof}
    \begin{enumerate}
        \item Suppose $\bar{G}\triangleleft G$ is a finite-index normal subgroup and let $K:=\bar{G}\cap A$. Clearly, $K$ is a finite-index subgroup of $A$.
        It follows from the presentation of $G$ that $tKt^{-1}=\varphi(K)$, so $\varphi(K)<K$ by normality of $\bar{G}$.
        For $a\in A$, normality of $\bar{G}$ and the fact that $t^r\in\bar{G}$ implies that
        $$(a-\varphi^r(a))t^r=at^r (-a)\in\bar{G},$$
        so $a-\varphi^r(a)\in K$.

        \item Suppose $K< A$ is a finite-index subgroup and $0< r\in\Z$ such that $\varphi(K)<K$ and $a-\varphi^r(a)\in K$ for all $a\in A$. Let $\bar{G}:=\langle K,t^r\rangle$.
        Let us first show that $\bar{G}$ is normal in $G$.
        We have $tKt^{-1}=\varphi(K)<K$, so $t\bar{G}t^{-1}<\bar{G}$ and
        $$t^{-1}\bar{G}t=t^{-r}t^{r-1}\bar{G}t^{-(r-1)}t^r<t^{-r}\bar{G}t^r=\bar{G}.$$
        For $a\in A$ we have that $a+K-a=K$ and
        $$at^r (-a)=(a-\varphi^r(a))t^r\in Kt^r<\bar{G},$$
        hence $a\bar{G}(-a)<\bar{G}$. This proves that $\bar{G}$ is normal in $G$.
        To see that $\bar{G}$ has finite index in $G$, consider the quotient map $G\to G/\bar{G}$. We can write $G$ as a product of subgroups $\langle t^{-1}\rangle A \langle t\rangle$ and each of these subgroups clearly has finite image in $G/\bar{G}$, hence so does $G$.
        
        Finally, we can apply the relations $t^ra=\varphi^r(a)t^r$ and $at^{-r}=t^{-r}\varphi^r(a)$ for $a\in A$, and use the fact that $\varphi^r(K)<K$, to write any element $g\in\bar{G}$ in the form $g=t^{-ir}kt^{jr}$ with $i,j\geq0$ and $k\in K$.
        By normal forms, we have that $g\in A$ if and only if $i=j$ and $k\in\varphi^{ir}(A)$.
        In this case, $g=a\in A$ and $\varphi^{ir}(a)=k$, so
        $$g=a-\varphi^r(a)+\varphi^r(a)-\varphi^{2r}(a)+\varphi^{2r}(a)-\dots-\varphi^{ir}(a)+\varphi^{ir}(a),$$
        which is an element of $K$ by our assumptions on $K$. Therefore $\bar{G}\cap A=K$.     
    \end{enumerate}
\end{proof}

\begin{rem}\label{rem:avarphi}
In both cases of Lemma \ref{lem:GbarK} we have that $a-\varphi^{xr}(a)\in K$ for all $a\in A$ and $x\geq 1$, which can also be written $a\in\varphi^{xr}(a)+K$. Indeed, the normality of $\bar{G}$ and the fact that $t^{xr}\in\bar{G}$ implies that
$$(a-\varphi^{xr}(a))t^{xr}=at^{xr} (-a)\in\bar{G},$$
        so $a-\varphi^{xr}(a)\in K$.
        As a consequence, we have that $\varphi^x(A)+K=A$ for all $x\geq1$.
\end{rem}

For $g_1,g_2\in G$, we say that $\langle g_1\rangle$ is \emph{separable from} $g_2$ if there is a finite-index normal subgroup $\bar{G}\triangleleft G$ with $g_2\notin\langle g_1\rangle\bar{G}$.
Clearly $G$ is cyclic subgroup separable if and only if $\langle g_1\rangle$ is separable from $g_2$ for all $g_1,g_2\in G$ with $g_2\notin\langle g_1\rangle$.
Over the course of the next three lemmas we reduce to the case where $g_1,g_2\in A$.

\begin{lem}\label{lem:barGcapA}
    Let $g=at^i$ with $a\in A$ and $i>0$, and let $1\neq b\in A$.
    Then there exists a finite-index normal subgroup $\bar{G}\triangleleft G$ such that $\langle g\rangle\bar{G}\cap A=\bar{G}\cap A$ and $b\notin \bar{G}$.
\end{lem}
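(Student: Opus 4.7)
The plan is to construct $\bar{G}$ directly via Lemma \ref{lem:GbarK}(2), taking $K=pA$ for a suitably chosen prime $p$. Since $b\neq 0$ in $A\cong\Z^n$, some coordinate of $b$ is nonzero, and since $\det\varphi\neq 0$, we may pick a prime $p$ that divides neither this coordinate nor $\det\varphi$. Then $b\notin pA$, and the reduction $\bar\varphi$ of $\varphi$ modulo $p$ lies in $\GL(n,\Z/p\Z)$; let $r_0$ be its order in this finite group, and set $r:=r_0\cdot i\cdot p$. Since $r$ is a multiple of $r_0$ we have $a'-\varphi^r(a')\in pA$ for all $a'\in A$, so Lemma \ref{lem:GbarK}(2) yields a finite-index normal subgroup $\bar G:=\langle pA,t^r\rangle$ of $G$ with $\bar G\cap A = pA$. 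In particular $b\notin\bar G$, giving one of the two required conditions.

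For the condition $\langle g\rangle \bar G\cap A = \bar G\cap A$, I use the surjection $\sigma\colon G\to\Z$ defined by $a\mapsto 0$ ($a\in A$) and $t\mapsto 1$, which is well-defined since $\sigma(tat^{-1})=0=\sigma(\varphi(a))$. Because $\bar G$ is generated by elements (namely $pA$ and $t^r$) whose $\sigma$-values lie in $r\Z$, we have $\sigma(\bar G)\subset r\Z$, and hence for any $k\in\Z$,
\[ g^k\in A\bar G \iff \sigma(g^k)=ki\in r\Z \iff r\mid ki.\]
Since $i\mid r$ by construction, the smallest positive such $k$ is $d:=r/i = r_0 p$. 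Using $ta=\varphi(a)t$ repeatedly gives the normal form $g^k = \bigl(\sum_{j=0}^{k-1}\varphi^{ji}(a)\bigr)\,t^{ki}$; in particular $g^d = c_d\,t^r$ with $c_d:=\sum_{j=0}^{d-1}\varphi^{ji}(a)$, and since $t^r\in\bar G$ it follows that $g^d\in\bar G$ iff $c_d\in\bar G\cap A=pA$.

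To see $c_d\in pA$, I split the sum into $p$ blocks of length $r_0$: writing $j=qr_0+s$ with $0\le q<p$ and $0\le s<r_0$, the fact that $\bar\varphi^{r_0}=\mathrm{id}$ on $A/pA$ gives $\varphi^{(qr_0+s)i}(a)\equiv\varphi^{si}(a)\pmod{pA}$, and summing yields
\[ c_d \equiv \sum_{q=0}^{p-1}\sum_{s=0}^{r_0-1}\varphi^{si}(a) = p\sum_{s=0}^{r_0-1}\varphi^{si}(a)\equiv 0\pmod p.\]
Thus $g^d\in\bar G$, so every power $g^k$ landing in $A\bar G$ (i.e.\ every multiple of $d$) already lies in $\bar G$. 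This gives $\langle g\rangle\bar G\cap A\subseteq \bar G\cap A$, and the reverse inclusion is immediate.

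The delicate point is the triple choice of factors in $r=r_0ip$: the factor $r_0$ ensures $\varphi^r\equiv\mathrm{id}\pmod{pA}$ so that Lemma \ref{lem:GbarK}(2) applies; the factor $i$ forces $d=r/i$ and makes $t^{di}=t^r$ automatically lie in $\bar G$; and the extra factor $p$ is precisely what causes the sum $c_d$ to collapse into $p$ identical blocks modulo $p$ and thereby vanish in $A/pA$. Without any one of these ingredients the argument fails.
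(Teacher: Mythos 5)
Your proof is correct, and it reaches the same endgame as the paper (build $\bar G=\langle K,t^{r}\rangle$ via Lemma \ref{lem:GbarK}(2) with $r$ a suitable multiple of $i$, then show the telescoping sum $\sum_j\varphi^{ji}(a)$ attached to the relevant power of $g$ lies in $K$, and use the homomorphism to $\Z$ to see that only those powers matter), but it gets its subgroup $K$ by a genuinely different route. The paper invokes residual finiteness of $G$ (Theorem \ref{mainresult}) to obtain some finite-index normal $\bar G$ missing $b$, sets $K=\bar G\cap A$, and then uses two orders in the finite group $A/K$: the order $l$ of the automorphism induced by $\varphi$ (which exists by Remark \ref{rem:avarphi}) and the order $q$ of $a_l+K$, taking the exponent $ilqr$. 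You instead take $K=pA$ for a prime $p$ dividing neither $\det\varphi$ nor a nonzero coordinate of $b$; your $r_0$ is the paper's $l$, and your extra factor $p$ plays the role of the paper's $q$, since $A/pA$ has exponent $p$ and so the $p$-block collapse kills the sum automatically. What your version buys: it is constructive and self-contained at this point in the argument -- it does not lean on Theorem \ref{mainresult}, and it is in effect an inline proof of the special case $m=p$ of the paper's later Lemma \ref{lem:Am}, with your condition $p\nmid\det\varphi$ being exactly the coprimality to $d=|A:\varphi(A)|=|\det\varphi|$ required there. What the paper's version buys: it works with an arbitrary separating subgroup, so no choice of basis, coordinates, or prime avoiding $\det\varphi$ is needed, at the cost of quoting residual finiteness. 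Two small points you left implicit but which are immediate: $\varphi(pA)<pA$, which is part of the hypothesis of Lemma \ref{lem:GbarK}(2), and the fact that for negative multiples $k$ of $d$ one still has $g^{k}=(g^{d})^{k/d}\in\bar G$; neither is a gap.
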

\begin{proof}
    The powers of $g$ are given by $g^m=a_m t^{im}$ ($m\geq1$), where
        \begin{equation}\label{am}
            a_m=a+\varphi^i(a)+\varphi^{2i}(a)+\cdots+\varphi^{(m-1)i}(a).
        \end{equation}
        Theorem \ref{mainresult} tells us that $G$ is residually finite, so there is a finite-index normal subgroup $\bar{G}\triangleleft G$ not containing $b$.
        Lemma \ref{lem:GbarK}\ref{item:barG} tells us that $K:=\bar{G}\cap A$ is a finite-index subgroup of $A$ such that $\varphi(K)<K$. Hence $\varphi$ descends to a well-defined homomorphism $\bar{\varphi}\colon A/K\to A/K$ given by $a'+K\mapsto \varphi(a')+K$. Remark \ref{rem:avarphi} implies that $\bar{\varphi}$ is surjective, and $A/K$ is finite, so in fact $\bar{\varphi}$ is an automorphism of $A/K$. Let $l\geq1$ be such that $\bar{\varphi}^l$ is the identity map on $A/K$, and let $a_l+K$ have order $q$ in $A/K$.
        Then for any $x\geq1$ we have
        \begin{align*}
            a_{lqx}+K&=a_l+\varphi^{li}(a_l)+\varphi^{2li}(a_l)+\cdots+\varphi^{(qx-1)li}(a_l)+K\\
            &=qxa_l+K\\
            &=K.
        \end{align*}

        Lemma \ref{lem:GbarK}\ref{item:barG} also tells us that $a'-\varphi^r(a')\in K$ for all $a'\in A$, where $t^r$ is the least positive power of $t$ contained in $\bar{G}$. 
        And by Remark \ref{rem:avarphi}, $a'-\varphi^{xr}(a')\in K$ for all $a'\in A$ and $x\geq 1$.        
        Then by Lemma \ref{lem:GbarK}\ref{item:K}, $\bar{G}':=\langle K,t^{ilqr}\rangle$ is a finite-index normal subgroup of $G$ with $\bar{G}^\prime \cap A=K$. 
        Note that $g^m=a_m t^{im}$ is contained in $A\bar{G}'$ if and only if $lqr$ divides $m$, so
        \begin{align*}
            \langle g\rangle \bar{G}'\cap A &= \langle g^{lqr}\rangle \bar{G}'\cap A\\
            &=\langle a_{lqr} t^{ilqr}\rangle \bar{G}'\cap A\\
            &=\langle a_{lqr}\rangle \bar{G}'\cap A\\
            &=\langle a_{lqr}\rangle+ K\\
            &=K.
        \end{align*}
        We also have $b\notin\bar{G}'$ since $b\notin K$.
\end{proof}

\begin{lem}\label{lem:notAconj}
    Every cyclic subgroup of $G$ that is not conjugate into $A$ is separable.
\end{lem}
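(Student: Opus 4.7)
The plan is to reduce to Lemma \ref{lem:barGcapA}. Let $g \in G$ generate a cyclic subgroup not conjugate into $A$, and let $h \in G \setminus \langle g \rangle$; the goal is to find a finite-index normal subgroup $\bar G \triangleleft G$ with $h \notin \langle g \rangle \bar G$.

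First I would use the surjection $\pi \colon G \to \Z$ defined by $\pi(t) = 1$, $\pi(A) = 0$ to simplify the form of $g$. Since $tat^{-1} = \varphi(a) \in A$, we have $A \subseteq t^{-1}At$, and so $\ker\pi = \bigcup_{k \geq 0} t^{-k}At^k$; in particular every element of $\ker\pi$ is conjugate into $A$, forcing $\pi(g) \neq 0$. After replacing $g$ by $g^{-1}$ if necessary, I have $\pi(g) = i > 0$. A direct computation shows $\bigcup_{j,k \geq 0} t^{-j}At^k$ is closed under products and inverses, hence equals $G$, so I may write $g = t^{-j}at^{j+i}$ for some $a \in A$ and $j \geq 0$; conjugating by $t^j$ turns this into $at^i$. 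Since separability is conjugation-invariant, I may assume from the start that $g = ct^i$ with $c \in A$ and $i > 0$.

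Next I would case on $\pi(h)$. If $\pi(h) \notin i\Z$, take $\bar G := \pi^{-1}(i\Z)$; this is a finite-index normal subgroup containing $\langle g \rangle$ but not $h$, so $h \notin \langle g \rangle \bar G = \bar G$. Otherwise $\pi(h) = im$ for some $m \in \Z$, so $hg^{-m} \in \ker\pi$, and I may write $hg^{-m} = t^{-k}at^k$ for some $k \geq 0$ and $a \in A$. The crucial move is now to conjugate the whole setup by $t^k$: setting $g'' := t^kgt^{-k} = \varphi^k(c)t^i$ and $h'' := t^kht^{-k}$, a short calculation using $t^kbt^{-k} = \varphi^k(b)$ for $b \in A$ yields $h'' = a \cdot (g'')^m$. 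Since $h \notin \langle g \rangle$, conjugation gives $h'' \notin \langle g'' \rangle$, forcing $a \neq 0$. Applying Lemma \ref{lem:barGcapA} to $g''$ with $b := a$ then provides a finite-index normal $\bar G \triangleleft G$ with $a \notin \bar G$ and $\langle g'' \rangle \bar G \cap A = \bar G \cap A$; since $a \in A \setminus \bar G$, the equality gives $a \notin \langle g'' \rangle \bar G$, and hence $h'' = a(g'')^m \notin \langle g'' \rangle \bar G$. Normality of $\bar G$ transfers this to $h \notin \langle g \rangle \bar G$, as required.

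The main obstacle is that Lemma \ref{lem:barGcapA} is only set up to separate $\langle g \rangle$ from elements of $A$, whereas $h$ is an arbitrary element of $G$. The device that overcomes this is the simultaneous conjugation by $t^k$ -- where $k$ comes from the ascending union $\ker\pi = \bigcup_k t^{-k}At^k$ -- which preserves $\langle g \rangle$ up to conjugacy while bringing the ``error term'' $hg^{-m}$ into $A$, reducing the general case to the one already handled by Lemma \ref{lem:barGcapA}.
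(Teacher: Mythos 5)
Your proof is correct and follows essentially the same route as the paper: normalize $g$ to the form $ct^i$ by conjugating by a power of $t$, dispose of the case of incompatible $t$-exponents using the finite-index normal subgroup $\pi^{-1}(i\Z)=\langle A,t^i\rangle$, and otherwise multiply by a power of $g$ and conjugate by a power of $t$ to reduce to separating $\langle ct^i\rangle$ from a nontrivial element of $A$ via Lemma \ref{lem:barGcapA}. Your single conjugation by $t^k$ (chosen so that $hg^{-m}$ lands in $A$) merely packages the paper's four-step case analysis more compactly; the key lemma and all the underlying moves are the same.
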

\begin{proof}
    We must show that $\langle g_1 \rangle$ is separable from $g_2$ whenever $g_1,g_2\in G$, with $g_1$ not conjugate into $A$ and $g_2\notin\langle g_1\rangle$. We do this in four steps.
    Throughout, we will use the fact that any element of $G$ can be written in the form $t^{-i}at^j$ for $a\in A$ and $i,j\geq 0$, and such an element is conjugate into $A$ if and only if $i=j$.
    We will also use the fact that, for any $h\in G$, $\langle g_1 \rangle$ is separable from $g_2$ if and only if $\langle hg_1h^{-1}\rangle$ is separable from $hg_2h^{-1}$.
    \begin{enumerate}

        \item\label{item:g1notAg2A} Suppose $g_1=at^i$ for $a\in A$ and $i>0$, and $g_2\in A$. 
        By Lemma \ref{lem:barGcapA}, there exists a finite-index normal subgroup $\bar{G}\triangleleft G$ such that $\langle g_1\rangle\bar{G}\cap A=\bar{G}\cap A$ and $g_2\notin \bar{G}$. 
        It follows that $g_2\notin\langle g_1\rangle\bar{G}$, as required.

        \item\label{item:g1g2notA} Suppose $g_1=at^i$ for $a\in A$ and $i>0$, and $g_2=bt^j$ for $b\in A$ and $j>0$.
        If $i$ does not divide $j$, then we can separate $\langle g_1\rangle$ from $g_2$ using $\bar{G}=\langle A,t^i\rangle$.

        Now suppose $i$ does divide $j$, say $j=li$.
        We have $g_1^l=a_l t^{il}=a_l t^j$, with $a_l$ defined as in (\ref{am}).
        By assumption, $g_1^l\neq g_2$, so $1\neq g_2 g_1^{-l}=b -a_l\in A$.
        By Lemma \ref{lem:barGcapA}, there exists a finite-index normal subgroup $\bar{G}\triangleleft G$ such that $\langle g_1\rangle\bar{G}\cap A=\bar{G}\cap A$ and $g_2 g_1^{-l}\notin \bar{G}$.
        We claim that $g_2\notin\langle g_1\rangle\bar{G}$.
        Indeed, if $g_2\in g_1^q\bar{G}$ for some $q\in\Z$, then $g_2 g_1^{-l}\in g_1^{q-l}\bar{G}\cap A\subseteq \bar{G}\cap A$, contradicting $g_2 g_1^{-l} \notin\bar{G}$.

        \item\label{item:g1ipos} Suppose $g_1=at^i$ for $a\in A$ and $i>0$. Write $g_2=t^{-j}bt^k$ for $b\in A$ and $j,k\geq0$. Replacing $g_2$ with $g_2^{-1}$ if necessary, we may assume that $j\leq k$.
        It suffices to show that $\langle t^j g_1 t^{-j}\rangle=\langle \varphi^j(a)t^i\rangle$ is separable from $t^j g_2 t^{-j}=bt^{k-j}$, and this follows from parts  \ref{item:g1notAg2A} and \ref{item:g1g2notA}. 

        \item Finally, we deal with the general case. Write $g_1=t^{-i}at^j$ for $a\in A$ and $i,j\geq0$. Replacing $g_1$ with $g_1^{-1}$ if necessary, we may assume that $i< j$ (we cannot have $i=j$ since $g_1$ is not conjugate into $A$).
        It suffices to show that $\langle t^i g_1 t^{-i}\rangle=\langle at^{j-i}\rangle$ is separable from $t^i g_2 t^{-i}$, and this follows from part \ref{item:g1ipos}.\qedhere
    \end{enumerate}
\end{proof}

\begin{lem}\label{lem:cyclicsubsep}
    If $\langle g_1 \rangle$ is separable from $g_2$ whenever $g_1,g_2\in A$ and $g_2\notin\langle g_1\rangle$, then $G$ is cyclic subgroup separable.
\end{lem}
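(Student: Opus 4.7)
The plan is to combine Lemma \ref{lem:notAconj} with the given hypothesis to handle every cyclic subgroup. By Lemma \ref{lem:notAconj}, cyclic subgroups not conjugate into $A$ are already separable, and a standard conjugation argument (conjugation by any element of $G$ permutes the finite-index normal subgroups of $G$ and hence is a homeomorphism for the profinite topology) shows that separability of a cyclic subgroup is conjugation-invariant. This reduces the task to showing that $\langle a\rangle$ is separable in $G$ for every $a\in A$.

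Fix $a\in A$ and $g_2\in G\setminus\langle a\rangle$, and write $g_2=t^{-i}bt^j$ with $b\in A$ and $i,j\geq 0$. I would split on whether $i=j$. When $i\neq j$, I would use the homomorphism $G\to\Z$ sending $t\mapsto 1$ and killing $A$, well-defined from the presentation because each defining relator $tat^{-1}\varphi(a)^{-1}$ has zero exponent sum in $t$; composing with reduction modulo any $N>|j-i|$ yields a finite-index normal subgroup $\bar{G}\triangleleft G$ containing $a$ but with $g_2$ mapping to $j-i\not\equiv 0\pmod{N}$. Since $\langle a\rangle\bar{G}=\bar{G}$ in this case, $g_2\notin\langle a\rangle\bar{G}$, as required.

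When $i=j$, the idea is to conjugate by $t^i$ and apply the hypothesis. Using the identity $\varphi^i(a)=t^iat^{-i}$, the condition $g_2\notin\langle a\rangle$ translates to $b\notin\langle\varphi^i(a)\rangle$: indeed, if $b=\varphi^i(a)^k=t^ia^kt^{-i}$, then $g_2=t^{-i}bt^i=a^k$. Since $\varphi^i(a),b\in A$, the hypothesis produces a finite-index normal $\bar{G}\triangleleft G$ with $b\notin\langle\varphi^i(a)\rangle\bar{G}$. Normality of $\bar{G}$ then lets me transfer this back: if $g_2=a^k\bar{g}$ with $\bar{g}\in\bar{G}$, then $b=t^ig_2t^{-i}=\varphi^i(a)^k(t^i\bar{g}t^{-i})\in\langle\varphi^i(a)\rangle\bar{G}$, a contradiction. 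Hence $g_2\notin\langle a\rangle\bar{G}$.

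The only subtle step is the last transfer in the case $i=j$, where one uses both the formula $\varphi^i(a)=t^iat^{-i}$ and the normality of $\bar{G}$ under conjugation by $t^i$. Everything else is routine, and I do not anticipate any serious obstacle.
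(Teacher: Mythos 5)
Your proof is correct and follows essentially the same route as the paper: reduce to $g_1\in A$ via Lemma \ref{lem:notAconj}, write $g_2=t^{-i}bt^{j}$, conjugate by $t^i$ to invoke the hypothesis when $i=j$, and kill the $t$-exponent when $i\neq j$. The only cosmetic difference is that you use the kernel of $G\to\Z/N\Z$ (with $t\mapsto 1$, $A\mapsto 0$) where the paper uses $\bar{G}=\langle A,t^{|j-i|+1}\rangle$, and these are the same subgroup.
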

\begin{proof}
By Lemma \ref{lem:notAconj}, it suffices to show that $\langle g_1\rangle$ is separable for all $g_1\in A$.
Let $g_2\in G-\langle g_1\rangle$. We must show that $\langle g_1\rangle$ is separable from $g_2$. Write $g_2=t^{-i}at^j$ for $a\in A$ and $i,j\geq 0$.
        It suffices to show that $\langle t^i g_1 t^{-i}\rangle=\langle\varphi^i(g_1)\rangle$ is separable from $t^ig_2t^{-i}=at^{j-i}$. If $j-i=0$, then this is possible by assumption of the lemma, otherwise we can separate using $\bar{G}=\langle A, t^{|j-i|+1}\rangle$ (noting that $\langle\varphi^i(g_1)\rangle<\bar{G}$ and $at^{j-i}\notin\bar{G}$).
\end{proof}

Over the next four lemmas we show that cyclic subgroup separability of $G$ is equivalent to a statement about the orders of elements of $A$ in finite quotients of $G$.

From now on let $d=|A:\varphi(A)|$. For $m\in\N$, let $mA$ denote the subgroup $\{ma\mid a\in A\}<A$.
For any $a\in A$ we have $\varphi(ma)=m\varphi(a)\in mA$, so $\varphi(mA)< mA$.
Therefore, $\varphi$ descends to a well-defined homomorphism $\varphi_m\colon A/mA\to A/mA$ given by $a+mA\mapsto\varphi(a)+mA$.

\begin{lem}\label{lem:Am}
    If $m\in\N$ is coprime to $d$, then there exists a finite-index normal subgroup $\bar{G}\triangleleft G$ with $\bar{G}\cap A=mA$.
\end{lem}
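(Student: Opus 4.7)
The plan is to apply Lemma \ref{lem:GbarK}\ref{item:K} with $K=mA$, so the task reduces to producing a positive integer $r$ such that $\varphi^r$ acts trivially on the finite quotient $A/mA$ (the inclusion $\varphi(mA)<mA$ being automatic since $\varphi(mA)=m\varphi(A)\subseteq mA$). Equivalently, I need to show that the induced endomorphism $\varphi_m\colon A/mA\to A/mA$ has finite order in the endomorphism monoid of $A/mA$; since $A/mA$ is finite, this will follow once I show that $\varphi_m$ is injective (hence an automorphism of a finite group).

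To show injectivity of $\varphi_m$, suppose $a\in A$ satisfies $\varphi(a)\in mA$, so $\varphi(a)=mb$ for some $b\in A$. Consider the image $b+\varphi(A)$ in $A/\varphi(A)$. Since $mb=\varphi(a)\in\varphi(A)$, this image has order dividing $m$. But $|A/\varphi(A)|=d$ and $\gcd(m,d)=1$, so the image must be trivial, i.e. $b\in\varphi(A)$. Write $b=\varphi(c)$ for some $c\in A$; then $\varphi(a)=m\varphi(c)=\varphi(mc)$, and since $\varphi$ is injective this gives $a=mc\in mA$, as required. This coprimality argument is the only real content of the lemma and is where the hypothesis $\gcd(m,d)=1$ is used.

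Having shown $\varphi_m$ is an automorphism of the finite group $A/mA$, let $r$ be its order, so that $\varphi_m^r$ is the identity on $A/mA$, which is precisely the statement $a-\varphi^r(a)\in mA$ for every $a\in A$. Now Lemma \ref{lem:GbarK}\ref{item:K} applied with $K=mA$ produces the finite-index normal subgroup $\bar{G}:=\langle mA,t^r\rangle\triangleleft G$ satisfying $\bar{G}\cap A=mA$, completing the proof.
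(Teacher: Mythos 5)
Your proof is correct and follows essentially the same route as the paper: both reduce to Lemma \ref{lem:GbarK}\ref{item:K} by showing the induced map $\varphi_m$ on the finite group $A/mA$ is an automorphism and taking $r$ to be its order. The only difference is cosmetic: you verify injectivity of $\varphi_m$ via element orders in $A/\varphi(A)$, whereas the paper verifies surjectivity by noting $|A:\varphi(A)+mA|$ divides both $m^n$ and $d$, hence equals $1$; both hinge on the same coprimality of $m$ and $d$.
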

\begin{proof}
    By Lemma \ref{lem:GbarK}\ref{item:K}, it suffices to find $r\geq 1$ such that $a-\varphi^r(a)\in mA$ for all $a\in A$, or equivalently $\varphi^r(a)+mA= a+mA$ for all $a\in A$.
    Clearly $|A:mA|=m^n$, so $|A: \varphi(A)+mA|$ divides $m^n$, but it also divides $d=|A:\varphi(A)|$. Since $m$ is coprime to $d$ we have $\varphi(A)+mA=A$. Therefore, the homomorphism $\varphi_m\colon A/mA\to A/mA$ defined above is an automorphism. Let $r\geq1$ be such that $\varphi_m^r$ is the identity map on $A/mA$. Then $\varphi^r(a)+mA= a+mA$ for every $a\in A$, as required.
\end{proof}

\begin{lem}\label{lem:trivintersect}
    If $0\neq g_1, g_2\in A$ with $\langle g_1\rangle\cap\langle g_2\rangle=\{0\}$, then $\langle g_1 \rangle$ is separable from $g_2$.
\end{lem}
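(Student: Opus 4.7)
The plan is to use Lemma \ref{lem:Am} to construct the separating subgroup. Specifically, I will find a prime $p$ coprime to $d$ with the property that $g_2\notin\langle g_1\rangle+pA$, and then apply Lemma \ref{lem:Am} to obtain a finite-index normal subgroup $\bar{G}\triangleleft G$ with $\bar{G}\cap A=pA$. A short direct computation shows that
\[
\langle g_1\rangle\bar{G}\cap A \;=\; \langle g_1\rangle+(\bar{G}\cap A) \;=\; \langle g_1\rangle+pA,
\]
since $g_1\in A$ forces any element of $\langle g_1\rangle\bar{G}\cap A$ to be of the form $k g_1+b$ with $b\in\bar{G}\cap A$. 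Hence the condition $g_2\notin\langle g_1\rangle+pA$ is exactly what we need to conclude $g_2\notin\langle g_1\rangle\bar{G}$.

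The remaining task is thus purely linear algebra inside the finitely generated abelian group $A\cong\Z^n$: produce a prime $p$ with $p\nmid d$ such that $g_2\notin\langle g_1\rangle+pA$. To do this, set $\bar A:=A/\langle g_1\rangle$ and let $\bar g_2$ denote the image of $g_2$. First I would check that $\bar g_2$ has infinite order in $\bar A$: if $k\bar g_2=0$ for some $k\neq 0$, then $kg_2\in\langle g_1\rangle\cap\langle g_2\rangle=\{0\}$, which contradicts torsion-freeness of $A$ since $g_2\neq 0$. Consequently the image of $\bar g_2$ in the torsion-free quotient $\bar A/\mathrm{Tor}(\bar A)\cong\Z^{n-1}$ is nonzero; fix a basis and write this image as $(c_2,\dots,c_n)$ with some $c_j\neq 0$.

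Now pick any prime $p$ such that $p\nmid d$ and $p\nmid c_j$; such $p$ exists because only finitely many primes divide $d\cdot c_j$. I claim $\bar g_2\notin p\bar A$. Indeed, if $\bar g_2\in p\bar A$, then passing to $\bar A/\mathrm{Tor}(\bar A)\cong\Z^{n-1}$, the vector $(c_2,\dots,c_n)$ would lie in $p\Z^{n-1}$, forcing $p\mid c_j$, a contradiction. Translating back, $g_2\notin\langle g_1\rangle+pA$, as required.

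There is no real obstacle: the only place care is needed is verifying that $\bar g_2$ has infinite order in $\bar A$, which is where the hypothesis $\langle g_1\rangle\cap\langle g_2\rangle=\{0\}$ (together with torsion-freeness of $A$) is used. Once that step is in place, the rest is a direct application of Lemma \ref{lem:Am} combined with the coset computation above.
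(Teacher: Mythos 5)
Your proposal is correct and follows essentially the same strategy as the paper: reduce the problem, via Lemma \ref{lem:Am}, to finding a prime $p$ coprime to $d$ with $g_2\notin\langle g_1\rangle+pA$, and note that $\langle g_1\rangle\bar{G}\cap A=\langle g_1\rangle+pA$. The only difference is the routine step producing such a prime --- you pass to $A/\langle g_1\rangle$ and use that the image of $g_2$ survives in the torsion-free quotient, whereas the paper extends $\langle g_1,g_2\rangle$ to a finite-index direct summand and counts generators of $A/pA$ --- and both versions are sound.
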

\begin{proof}
It follows from $\langle g_1 \rangle \cap \langle g_2 \rangle= \{0\}$ that $\langle g_1,g_2\rangle$ is a free abelian subgroup of rank two in $A$. Then, by the basis extension property for free abelian groups, there are two subgroups $A_1, A_2$ of $A$ such that $A_1 \cong \mathbb{Z}^2$, $A_2 \cong \mathbb{Z}^{n-2}$, $\langle g_1, g_2 \rangle$ has finite index in $A_1$ and $A= A_1 \oplus A_2$. Hence, if we denote by $\tilde{A}$ the subgroup $\langle g_1 , g_2 \rangle \oplus A_2$, then $\tilde{A}$ has finite index in $A$, say it has index $m$.

Now let $p$ be a prime that does not divide $d$ or $m$. Since $|A:pA|=p^n$, the index $|A: \tilde{A} +pA|$ must be a power of $p$, but it also divides $m$, so $|A: \tilde{A}+pA|=1$ and $A= \tilde{A} +pA$.

Let $A_2$ be generated by $\{g_3, g_4,\dots,g_n\}$.
We have $A=\langle g_1, g_2,\dots,g_n\rangle +pA$, and $A/pA$ cannot be generated by fewer than $n$ elements, therefore $g_2 \notin \langle g_1 \rangle +pA$.
By Lemma \ref{lem:Am}, there exists a finite-index normal subgroup $\bar{G}\triangleleft G$ with $\bar{G}\cap A=pA$. It follows that $g_2\notin\langle g_1\rangle\bar{G}$, as required.
\end{proof}

\begin{lem}\label{lem:ordermult}
    If $H$ is any cyclic subgroup separable group, then for any $1\neq h\in H$ and any $m\in\N$, there exists a finite-index normal subgroup $\bar{H}\triangleleft H$ such that the order of $h$ in the quotient $H/\bar{H}$ is a multiple of $m$.
\end{lem}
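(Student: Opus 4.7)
The plan is to exploit cyclic subgroup separability of $H$ by separating the cyclic subgroup $\langle h^m\rangle$ from each of the finitely many elements $h, h^2, \ldots, h^{m-1}$, and then intersecting the resulting finite-index normal subgroups. Throughout I will assume $h$ has infinite order; this is no loss for the intended application, since the ascending HNN extension $G$ under study is torsion-free.

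First I would observe that $h^j \notin \langle h^m\rangle$ for every $j \in \{1, \ldots, m-1\}$: indeed, an equality $h^j = h^{km}$ would force $j = km$ and hence $k = 0$, contradicting $1 \leq j < m$. By cyclic subgroup separability, $\langle h^m\rangle$ is closed in the profinite topology on $H$, so for each such $j$ there exists a finite-index normal subgroup $\bar{H}_j \triangleleft H$ with $h^j \notin \langle h^m\rangle\bar{H}_j$. Setting $\bar{H} := \bigcap_{j=1}^{m-1} \bar{H}_j$ produces a finite-index normal subgroup of $H$ satisfying $h^j \notin \langle h^m\rangle\bar{H}$ for every $j \in \{1, \ldots, m-1\}$.

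To conclude, let $k$ be the order of the image $\bar{h}$ of $h$ in the finite group $H/\bar{H}$ and put $d := \gcd(k, m)$. Inside the cyclic group $\langle \bar{h}\rangle$ of order $k$ we have $\langle \bar{h}^m\rangle = \langle \bar{h}^d\rangle$, so $\bar{h}^j \in \langle \bar{h}^m\rangle$ if and only if $d \mid j$. The previous step therefore yields $d \nmid j$ for every $j \in \{1, \ldots, m-1\}$, which combined with $d \mid m$ forces $d = m$, i.e.\ $m \mid k$, exactly as required. The argument is short and there is no real obstacle; the only thing that needs attention is confirming that separability in the paper's sense supplies finite-index \emph{normal} subgroups, which it does by the very definition of the profinite topology adopted in the introduction.
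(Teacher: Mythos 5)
Your argument is correct and is essentially the paper's own proof: choose a finite-index normal subgroup $\bar{H}$ with $h,h^2,\dots,h^{m-1}\notin\langle h^m\rangle\bar{H}$ (obtained, as you do, by separating $\langle h^m\rangle$ from each $h^j$ and intersecting) and conclude that the image of $h$ in $H/\bar{H}$ has order divisible by $m$; the paper finishes with a direct contradiction ($h^{xm+y}\in\bar{H}$ would give $h^y\in\langle h^m\rangle\bar{H}$) rather than your gcd computation, an immaterial difference. Your explicit restriction to infinite-order $h$ is reasonable: the lemma as literally stated fails for torsion elements (e.g. $H=\Z/2\Z$, $m=3$), the paper's proof tacitly assumes $h^j\notin\langle h^m\rangle$ as well, and this is harmless since the lemma is only applied to nontrivial elements of the torsion-free subgroup $A<G$.
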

\begin{proof}
    Take a finite-index normal subgroup $\bar{H}\triangleleft H$ such that $h,h^2,\dots,h^{m-1}\notin\langle h^{m}\rangle\bar{H}$. Then the order of $h$ in the quotient $H/\bar{H}$ is a multiple of $m$. Indeed, otherwise there are $x,y\in\Z$ with $1\leq y<m$ such that $h^{xm+y}\in\bar{H}$, and then $h^y\in\langle h^{m}\rangle\bar{H}$, contrary to our assumption on $\bar{H}$.
\end{proof}

\begin{lem}\label{lem:ordermultiple}
    $G$ is cyclic subgroup separable if and only if, for all $0\neq a\in A$, for all primes $p$, and for all $s\in\N$, there exists a finite-index normal subgroup $\bar{G}\triangleleft G$ such that the order of $a$ in the quotient $G/\bar{G}$ is a multiple of $p^s$.
\end{lem}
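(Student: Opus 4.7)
The plan is to prove both directions by leveraging the reductions already established (Lemmas \ref{lem:notAconj}, \ref{lem:cyclicsubsep}, \ref{lem:trivintersect}), so that everything reduces to separating $\langle g_1\rangle$ from $g_2$ when $g_1,g_2\in A$ lie in a common cyclic subgroup.

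The forward direction is immediate: if $G$ is cyclic subgroup separable then Lemma \ref{lem:ordermult} applied with $H=G$, $h=a$, and $m=p^s$ supplies the required finite-index normal subgroup in which $a$ has order divisible by $p^s$.

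For the backward direction, the first step is to invoke Lemma \ref{lem:cyclicsubsep}, which reduces the problem to separating $\langle g_1\rangle$ from $g_2$ for $g_1,g_2\in A$ with $g_2\notin\langle g_1\rangle$. I would then split into three cases. If $g_1=0$, then the hypothesis applied to $a=g_2$, any prime $p$ and $s=1$ produces a finite-index normal subgroup $\bar G$ in which $g_2$ has nontrivial order, giving $g_2\notin\bar G=\langle g_1\rangle\bar G$. If $g_1\neq 0$ and $\langle g_1\rangle\cap\langle g_2\rangle=\{0\}$, then Lemma \ref{lem:trivintersect} applies directly. The remaining case is when $g_1\neq 0$ and $\langle g_1\rangle\cap\langle g_2\rangle\neq\{0\}$; here $g_1$ and $g_2$ span a rank-one subgroup of the free abelian group $A$, which is therefore cyclic, say generated by some $c\in A$. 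Writing $g_1=\alpha c$ and $g_2=\beta c$ with $\alpha,\beta\in\Z$, the condition $g_2\notin\langle g_1\rangle$ means $\alpha\nmid\beta$, so there is a prime $p$ with $v_p(\alpha)>v_p(\beta)$; set $s=v_p(\alpha)$.

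The main calculation — and the step I expect to be the most delicate — is to verify that once we apply the hypothesis to $a=c$, $p$, and $s=v_p(\alpha)$ to obtain a finite-index normal subgroup $\bar G\triangleleft G$ in which the image of $c$ has order $N$ with $p^s\mid N$, we actually achieve $g_2\notin\langle g_1\rangle\bar G$. Inside the cyclic group $\langle c\bar G\rangle\cong\Z/N$, the subgroup $\langle g_1\bar G\rangle$ is generated by $\gcd(\alpha,N)\cdot c\bar G$, and membership of $\beta c\bar G$ in this subgroup is equivalent to $\gcd(\alpha,N)\mid\beta$. Since $v_p(N)\geq s=v_p(\alpha)$, one has $v_p(\gcd(\alpha,N))=v_p(\alpha)>v_p(\beta)$, so this divisibility fails and therefore $g_2\notin\langle g_1\rangle\bar G$, completing the proof.
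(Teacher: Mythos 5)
Your proposal is correct and follows essentially the same route as the paper: the forward direction via Lemma \ref{lem:ordermult}, and the converse by reducing through Lemmas \ref{lem:cyclicsubsep} and \ref{lem:trivintersect} to the case where $g_1,g_2$ lie in a common infinite cyclic subgroup $\langle c\rangle$ of $A$ and then applying the hypothesis to $c$ with $p^s$ chosen from the failure of divisibility. The only cosmetic difference is the final verification: you compute with $\gcd(\alpha,N)$ inside $\Z/N$, while the paper multiplies by $q$ (where the order of $c$ is $qp^s$) to push $\langle g_1\rangle$ but not $g_2$ into $\bar{G}$ -- both are valid.
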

\begin{proof}
If $G$ is cyclic subgroup separable then the claim follows from Lemma \ref{lem:ordermult}.
    Conversely, suppose that for all $0\neq a\in A$, for all primes $p$, and for all $s\in\N$, there exists a finite-index normal subgroup $\bar{G}\triangleleft G$ such that the order of $a$ in the quotient $G/\bar{G}$ is a multiple of $p^s$.
    By Lemma \ref{lem:cyclicsubsep} and Lemma \ref{lem:trivintersect}, to show that $G$ is cyclic subgroup separable it suffices to show that $\langle g_1 \rangle$ is separable from $g_2$ when $0\neq g_1,g_2\in A$ have a common nonzero multiple and $g_2\notin\langle g_1\rangle$. Such $g_1$, $g_2$ can be written as $g_1=xa$ and $g_2=ya$ with $0\neq a\in A$ and $x,y\in\Z$ with $x$ not dividing $y$. In this case there must be a prime power $p^s$ that divides $x$ but not $y$.
    Then by assumption we may take a finite-index normal subgroup $\bar{G}\triangleleft G$ such that the order of $a$ in the quotient $G/\bar{G}$ is a multiple of $p^s$ -- say $a$ has order $qp^s$ in the quotient.
    Then $qg\in\bar{G}$ for all $g\in\langle g_1\rangle$ but $qg_2\notin\bar{G}$, so $g_2\notin\langle g_1\rangle\bar{G}$.
\end{proof}

Now we turn to some linear algebra.
Given a basis of $A$, we can write $\varphi$ as an integer matrix.
From this, we can define the characteristic equation $f_\varphi(x)=0$, the trace tr$(\varphi)$ and the determinant det$(\varphi)$ of $\varphi$. These are all independent of the choice of basis.
By considering the Smith normal form of $\varphi$, it is clear that $|\det(\varphi)|=d$.
If $\lambda\in\Z$ is an eigenvalue of $\varphi$ (i.e. an integer root of the characteristic equation) then $\varphi(a)=\lambda a$ for some $0\neq a\in A$. Such an $a$ is called an \emph{eigenvector} of $\varphi$.
Non-integer eigenvalues are irrational so do not play a role in our setting.
The following lemma shows that integer eigenvalues with absolute value greater than 1 provide an easy obstruction to cyclic subgroup separability.

\begin{lem}\label{lem:eigenvalues'}
    If $\varphi$ has an eigenvalue $\lambda\in\Z$ with $|\lambda|>1$, and if $0\neq a\in A$ is a corresponding eigenvector, then $\langle \lambda a\rangle$ is not separable from $a$.
\end{lem}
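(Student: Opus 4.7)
The plan is to argue directly that for every finite-index normal subgroup $\bar{G}\triangleleft G$, the coset $a\bar{G}$ already lies in $\langle \lambda a\rangle\bar{G}$; this precisely says that no finite-index normal subgroup separates $a$ from $\langle \lambda a\rangle$.

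The key input is the defining relation of the HNN extension combined with the hypothesis on $a$: in $G$ we have $tat^{-1}=\varphi(a)=\lambda a$. Fix an arbitrary finite-index normal subgroup $\bar{G}\triangleleft G$, write $\bar{a},\bar{t}$ for the images of $a,t$ in the finite quotient $G/\bar{G}$, and let $k$ be the order of $\bar{a}$. Switching to multiplicative notation inside the cyclic subgroup $\langle a\rangle$, the relation becomes $\bar{t}\bar{a}\bar{t}^{-1}=\bar{a}^\lambda$, and since conjugation preserves order, $\bar{a}^\lambda$ also has order $k$; but $\bar{a}^\lambda$ has order $k/\gcd(k,\lambda)$, so $\gcd(k,\lambda)=1$. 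Hence $\lambda$ is a unit modulo $k$, so $\bar{a}\in\langle\bar{a}^\lambda\rangle$, and lifting back to $G$ gives $a\in\langle\lambda a\rangle\bar{G}$, as desired.

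There is essentially no obstacle in this argument; the only point that deserves care is the bookkeeping between the additive notation used for $A$ (in which $\varphi(a)=\lambda a$ and $\langle\lambda a\rangle=\lambda\mathbb{Z}\cdot a$) and the multiplicative convention for elements of $G$ (in which the same relation reads $tat^{-1}=a^\lambda$). With that translation in place, the entire argument reduces to the observation that conjugate elements of a finite group have the same order. One may view this as a quick avatar of the classical fact that the Baumslag--Solitar group $BS(1,\lambda)$ with $|\lambda|>1$ fails to separate $\langle a^\lambda\rangle$ from $a$; here the same phenomenon is inherited by $G$ through the subgroup $\langle a,t\rangle$, but the proof above bypasses any explicit identification of this subgroup with $BS(1,\lambda)$.
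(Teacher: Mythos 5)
Your proof is correct and is essentially the paper's argument: both rest on the single observation that $tat^{-1}=\varphi(a)=\lambda a$ makes $a$ and $\lambda a$ conjugate, so their images in any finite quotient $G/\bar{G}$ have the same order, forcing $a\in\langle\lambda a\rangle\bar{G}$. The paper phrases this by contradiction (a separating $\bar{G}$ would give $a\bar{G}$ strictly larger order than $(\lambda a)\bar{G}$), while you run the same order/$\gcd$ computation directly; the difference is purely presentational.
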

\begin{proof}
Suppose for contradiction that $\langle \lambda a\rangle$ is separable from $a$. Then there is a finite-index normal subgroup $\bar{G}\triangleleft G$ with $a\notin\langle \lambda a\rangle\bar{G}$. In particular, $a\bar{G}$ has a larger order than $(\lambda a)\bar{G}$ in the finite quotient $G/\bar{G}$. But $tat^{-1}=\varphi(a)=\lambda a$, so $a\bar{G}$ and $(\lambda a)\bar{G}$ are conjugate in $G/\bar{G}$ and they must have the same order, a contradiction.    
\end{proof}

\begin{rem}
Another way to see why $G$ is not cyclic subgroup separable in this case is to observe that the subgroup $\langle a,t\rangle<G$ is isomorphic to the Baumslag--Solitar group BS(1,$\lambda$).
\end{rem}

Viewing $A=\Z^n<\Q^n$, let $\varphi_\Q$ denote the $\Q$-linear map $\Q^n\to\Q^n$ induced by $\varphi$.
Let 
$$\{0\}=V_0\leq V_1\leq V_2\leq \dots \leq V_l=\Q^n$$
be a composition series for $\Q^n$ as a $\Q[\varphi_\Q]$-module, i.e. the $V_i$ are distinct $\varphi_\Q$-invariant subspaces of $\Q^n$ such that there is no $\varphi_\Q$-invariant subspace $V_i\lneq V\lneq V_{i+1}$ for any $0\leq i<l$.
Let $A_i=V_i\cap A$.
Then
$$\{0\}=A_0< A_1< A_2< \dots < A_l=A$$
are $\varphi$-invariant subgroups of $A$, such that any $\varphi$-invariant subgroup $A_i\lneq B< A_{i+1}$ ($0\leq i<l$) has finite index in $A_{i+1}$ (indeed otherwise the $\Q$-span of $B$ would yield a $\varphi_\Q$-invariant subspace strictly between $V_i$ and $V_{i+1}$).

For $1\leq i\leq l$, we note that $A_i/A_{i-1}$ is torsion-free, say it is isomorphic to $\Z^{n_i}$. Let $\varphi_i$ denote the endomorphism of $A_i/A_{i-1}$ induced by $\varphi$, and let $f_i(x)$ be its characteristic polynomial.
Note that $\varphi$ induces a $\Q$-linear map of $V_i/V_{i-1}$ with the same characteristic polynomial, and $f_i(x)$ is irreducible over $\Q$ by the following lemma.

\begin{lem}
    Let $\alpha$ be a linear map of a finite-dimensional vector space $V$ over a field $F$.
    Then the characteristic polynomial $f_\alpha(x)$ is irreducible over $F$ if and only if there is no proper non-trivial $\alpha$-invariant subspace of $V$.    
\end{lem}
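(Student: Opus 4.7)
My plan is to handle the two directions of the biconditional separately, using standard linear algebra.

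For the forward direction, I would argue by contrapositive. Assume there exists a proper non-trivial $\alpha$-invariant subspace $W \leq V$. Then $\alpha$ restricts to $\alpha|_W$ and descends to $\bar\alpha$ on $V/W$. By choosing a basis of $V$ adapted to the flag $0 \leq W \leq V$, the matrix of $\alpha$ becomes block upper-triangular with diagonal blocks given by the matrices of $\alpha|_W$ and $\bar\alpha$. Taking determinants of the block form, I get $f_\alpha(x) = f_{\alpha|_W}(x)\, f_{\bar\alpha}(x)$. Both factors are monic of degree at least $1$ since $0 < \dim W < \dim V$, so $f_\alpha$ is reducible.

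For the backward direction, I would also argue by contrapositive, so suppose $f_\alpha$ is reducible. Let $m_\alpha$ denote the minimal polynomial of $\alpha$. I split into two cases according to whether $\deg m_\alpha < \dim V$ or $\deg m_\alpha = \dim V$. In the first case, pick any $0 \neq v \in V$; then the cyclic subspace $F[\alpha]\cdot v$ is $\alpha$-invariant, non-zero, and of dimension at most $\deg m_\alpha < \dim V$, so it is proper non-trivial. In the second case, $m_\alpha$ and $f_\alpha$ are both monic of the same degree and $m_\alpha \mid f_\alpha$, so $m_\alpha = f_\alpha$, and in particular $m_\alpha$ is reducible. Factor $m_\alpha = a(x) b(x)$ with $\deg a, \deg b \geq 1$, and set $W := \ker a(\alpha)$. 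Clearly $W$ is $\alpha$-invariant. It is proper because $\deg a < \deg m_\alpha$ forces $a(\alpha) \neq 0$, and it is non-trivial because $b(\alpha) \neq 0$ for the same reason, so picking $v$ with $b(\alpha)v \neq 0$ produces a non-zero element $b(\alpha)v \in W$ (using $a(\alpha) b(\alpha) = m_\alpha(\alpha) = 0$).

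The only subtle point is the second case of the reverse direction, where one must remember to invoke Cayley–Hamilton (so that $m_\alpha$ and $f_\alpha$ live in the same ring and can be compared by degree) and to verify both that $W$ is proper and that $W$ is non-trivial; the latter is what forces the mild trick of exhibiting a non-zero element in the image of $b(\alpha)$. Everything else is essentially bookkeeping.
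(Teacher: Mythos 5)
Your proof is correct. The forward direction is the same as the paper's: a basis adapted to $0\leq W\leq V$ gives a block upper-triangular matrix and hence the factorization $f_\alpha=f_{\alpha|_W}\,f_{\bar\alpha}$ into monic factors of positive degree. In the backward direction the core mechanism also coincides with the paper's -- factor the polynomial as $a(x)b(x)$, take $W=\ker a(\alpha)$, note $a(\alpha)\neq 0$ and $b(\alpha)\neq 0$ by minimality, and use Cayley--Hamilton to see that the nonzero image of $b(\alpha)$ lands in $W$ -- but you organize it differently. The paper argues this direction non-contrapositively: assuming there is no proper non-trivial invariant subspace, it invokes the rational canonical form to conclude that the minimal polynomial equals $f_\alpha$, and then derives a contradiction from a hypothetical factorization. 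You instead take the contrapositive and split on whether $\deg m_\alpha<\dim V$ or $\deg m_\alpha=\dim V$: in the first case a cyclic subspace $F[\alpha]\cdot v$ immediately supplies a proper non-trivial invariant subspace, and in the second case $m_\alpha=f_\alpha$ follows from Cayley--Hamilton plus a degree count, after which the kernel construction applies. The net effect is that your argument avoids the rational canonical form altogether, at the cost of an extra (easy) case; the paper's version is slightly shorter by appealing to that standard structural fact. Both are complete, and your verification that $W$ is simultaneously proper and non-trivial is exactly the point the paper also has to make.
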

\begin{proof}
    If $W\leq V$ is a proper non-trivial $\alpha$-invariant subspace, then the characteristic equation of $\alpha|_W$ divides $f_\alpha(x)$ (this is clear if you extend a basis of $W$ to a basis of $V$, and write $\alpha$ as a matrix with respect to this basis), so $f_\alpha(x)$ is reducible over $F$.

    Conversely, if there is no proper non-trivial $\alpha$-invariant subspace of $V$, then we deduce from the rational canonical form of $\alpha$ that the minimal polynomial of $\alpha$ is the same as the characteristic polynomial $f_\alpha(x)$. Now suppose for contradiction that there is a factorization $f_\alpha(x)=q(x)r(x)$ with $q(x)$, $r(x)$ polynomials over $F$ of smaller degree than $f_\alpha(x)$. The minimal polynomial $f_\alpha(x)$ does not divide $q(x)$, so the kernel $U$ of $q(\alpha)$ is a proper $\alpha$-invariant subspace of $V$. Similarly, $f_\alpha(x)$ does not divide $r(x)$, so the image of $r(\alpha)$ is a non-trivial subspace of $V$, and it is contained in $U$ since $0=f_\alpha(\alpha)=q(\alpha)r(\alpha)$ by the Cayley--Hamilton Theorem. Hence $U$ is a proper non-trivial $\alpha$-invariant subspace of $V$, a contradiction.
\end{proof}

If we choose a basis of $A$ that restricts to give bases of the $A_i$, then $\varphi$ is represented by a block upper triangular matrix with respect to this basis, where the blocks correspond to the endomorphisms $\varphi_i$.
It follows that
$$f_\varphi(x)=f_1(x)f_2(x)\dots f_l(x)$$
is the unique factorization of $f_\varphi(x)$ into monic irreducible polynomials over $\Q$.

The key to determining cyclic subgroup separability of $G$ will be the relationship between the subgroups $A_i$ and the following family of subgroups.
For $p$ a prime, $0\leq i<l$ and $m\in\N$, define
$$K_{p^m,i}:=\{a\in A\mid \varphi^j(a)\in A_i+p^m A\text{ for some }j\geq0\}.$$

\begin{lem}\label{lem:ppower}
    $|A:K_{p^m,i}|$ is a power of $p$.
\end{lem}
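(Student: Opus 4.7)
The plan is to exhibit $A/K_{p^m,i}$ as isomorphic to a subquotient of the finite $p$-group $A/(A_i+p^mA)$.

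First, I would observe that the subgroup $A_i+p^mA$ is $\varphi$-invariant: by construction $\varphi(A_i)\subseteq A_i$, and $\varphi(p^mA)=p^m\varphi(A)\subseteq p^mA$. Moreover, it has $p$-power index in $A$, because $p^mA\leq A_i+p^mA\leq A$ and $|A:p^mA|=p^{mn}$ so every intermediate index divides $p^{mn}$. Hence $\varphi$ descends to an endomorphism $\bar{\varphi}$ of the finite abelian $p$-group $Q:=A/(A_i+p^mA)$, and an element $a\in A$ lies in $K_{p^m,i}$ if and only if $\bar{\varphi}^j(\bar{a})=0$ for some $j\geq 0$, where $\bar{a}$ denotes the image of $a$ in $Q$.

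Next, I would use finiteness of $Q$ to show that the ascending chain
\[\ker\bar{\varphi}\leq\ker\bar{\varphi}^2\leq\ker\bar{\varphi}^3\leq\cdots\]
stabilises at some $\ker\bar{\varphi}^{j_0}$. Therefore $K_{p^m,i}$ is precisely the preimage of $\ker\bar{\varphi}^{j_0}$ under the quotient map $A\twoheadrightarrow Q$, and the first isomorphism theorem gives
\[A/K_{p^m,i}\;\cong\;Q/\ker\bar{\varphi}^{j_0}\;\cong\;\mathrm{image}(\bar{\varphi}^{j_0})\leq Q.\]
Since $Q$ is a finite $p$-group, so is any of its subgroups, and $|A:K_{p^m,i}|$ is a power of $p$ as claimed.

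A small preliminary point worth verifying en route is that $K_{p^m,i}$ is indeed a subgroup of $A$ (this is implicit in the above characterisation as a preimage, but worth noting explicitly): if $\varphi^{j_1}(a),\varphi^{j_2}(b)\in A_i+p^mA$ and $j=\max(j_1,j_2)$, then $\varphi^j(a)$ and $\varphi^j(b)$ both lie in the $\varphi$-invariant subgroup $A_i+p^mA$, hence so does $\varphi^j(a\pm b)$. I do not anticipate a serious obstacle in this argument; the only place care is required is in confirming that $A_i+p^mA$ is $\varphi$-invariant and of $p$-power index, which reduces the problem to basic facts about endomorphisms of finite $p$-groups.
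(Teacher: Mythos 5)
Your argument is correct, but it is considerably more elaborate than what is needed, and it differs in route from the paper's proof. The paper simply observes that $p^m A< K_{p^m,i}$ (take $j=0$ in the definition, since $p^mA\subseteq A_i+p^mA$) and that $|A:p^mA|=p^{mn}$; hence $|A:K_{p^m,i}|$ divides $p^{mn}$ and is a power of $p$. You instead pass to the finite $p$-group $Q=A/(A_i+p^mA)$, note that $\varphi$ descends to $\bar\varphi$ because $A_i+p^mA$ is $\varphi$-invariant, identify $K_{p^m,i}$ as the preimage of the stabilized kernel $\ker\bar\varphi^{j_0}$, and conclude via $A/K_{p^m,i}\cong\operatorname{image}(\bar\varphi^{j_0})\leq Q$. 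Every step of this checks out, and it buys a little extra: it makes explicit that $K_{p^m,i}$ is a subgroup (which the paper leaves implicit, though your closure argument using $\varphi$-invariance of $A_i+p^mA$ is exactly the right justification) and it gives a concrete description of the quotient $A/K_{p^m,i}$. The cost is that the machinery of stabilizing kernels and the first isomorphism theorem is not needed for the statement itself; once you know $p^mA\leq K_{p^m,i}\leq A$, the index divisibility already finishes the proof, which is the shortcut the paper takes.
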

\begin{proof}
    This follows from the fact that $p^m A< K_{p^m,i}$ and $|A:p^m A|=p^{mn}$.
\end{proof}

\begin{lem}\label{lem:Kp^mKp}
    If $p^{m-1}a\in K_{p^m,i}$ with $a\in A$, then $a\in K_{p,i}$.
\end{lem}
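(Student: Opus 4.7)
The plan is to unpack the definition of $K_{p^m,i}$, translate the hypothesis into an equation modulo $A_i+p^mA$, and then pass to the quotient $A/A_i$, where the torsion-freeness of this group allows us to divide by $p^{m-1}$.

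More precisely, the hypothesis $p^{m-1}a\in K_{p^m,i}$ gives some $j\geq 0$ with $\varphi^j(p^{m-1}a)\in A_i+p^mA$. Since $\varphi$ is a homomorphism, this becomes $p^{m-1}\varphi^j(a)\in A_i+p^mA$. The goal $a\in K_{p,i}$ will be achieved by exhibiting the same $j$: I want $\varphi^j(a)\in A_i+pA$. Thus the content of the lemma reduces to the following purely abelian-group statement: in $\bar A:=A/A_i$, if $p^{m-1}\bar b\in p^m\bar A$, then $\bar b\in p\bar A$. This is immediate provided $\bar A$ has no $p$-torsion.

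The one step that needs a short justification is that $A/A_i$ is torsion-free. This follows from the definition $A_i=V_i\cap A$ together with the fact that $V_i$ is a $\Q$-subspace of $\Q^n$: if $ka\in A_i$ for some nonzero integer $k$ and $a\in A$, then $ka\in V_i$, so $a\in V_i$ (as $V_i$ is $\Q$-closed), giving $a\in V_i\cap A=A_i$. Hence $A/A_i$ embeds into the $\Q$-vector space $\Q^n/V_i$ and is in particular torsion-free (indeed free abelian of finite rank).

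I don't anticipate any real obstacle; the only conceptual point is spotting that one should work in the quotient $A/A_i$ rather than trying to manipulate cosets of $A_i+p^mA$ directly, and that the correct $j$ for membership in $K_{p,i}$ is the same $j$ furnished by the hypothesis. The proof should be two or three lines once these observations are made.
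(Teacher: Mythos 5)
Your proof is correct and follows essentially the same route as the paper: apply $\varphi^j$, write $p^{m-1}\varphi^j(a)\in A_i+p^mA$, and cancel the factor $p^{m-1}$ using the fact that $A/A_i$ is torsion-free (the paper phrases this as $A_i$ being a direct factor of $A$, which is the same point, justified as you do by $A_i=V_i\cap A$ with $V_i$ a $\Q$-subspace). No gaps.
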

\begin{proof}
    We have $p^{m-1}\varphi^j(a)=\varphi^j(p^{m-1}a)\in A_i+p^mb$ for some $j\geq0$ and some $b\in A$.
    Hence $p^{m-1}(\varphi^j(a)-pb)\in A_i$, and $\varphi^j(a)-pb\in A_i$ since $A_i$ is a direct factor of $A$.
    Then $\varphi^j(a)\in A_i+pA$, so $a\in K_{p,i}$.
\end{proof}

\begin{lem}\label{lem:Kp^mivarphiinv}
    $\varphi(K_{p^m,i})< K_{p^m,i}$.
\end{lem}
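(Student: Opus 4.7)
The plan is to unpack the definition of $K_{p^m,i}$ and use that $A_i$ is $\varphi$-invariant by construction (it arises as $V_i \cap A$ where $V_i$ is a $\varphi_\Q$-invariant subspace), together with the obvious fact that $\varphi(A) \subseteq A$. The whole argument should be essentially a one-line chase.

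Concretely, I would take an arbitrary $a \in K_{p^m,i}$ and produce an integer $j' \geq 0$ witnessing $\varphi(a) \in K_{p^m,i}$. By the defining property of $K_{p^m,i}$, there is some $j \geq 0$ with $\varphi^j(a) \in A_i + p^m A$. Applying $\varphi$ to both sides gives
\[
\varphi^{j+1}(a) = \varphi(\varphi^j(a)) \in \varphi(A_i) + p^m \varphi(A) \subseteq A_i + p^m A,
\]
where the final containment uses $\varphi(A_i) \subseteq A_i$ (invariance, as just noted) and $\varphi(A) \subseteq A$. Setting $j' = j$, we have $\varphi^{j'}(\varphi(a)) = \varphi^{j+1}(a) \in A_i + p^m A$, so $\varphi(a) \in K_{p^m,i}$.

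Since there is nothing delicate here, I do not anticipate a main obstacle; the lemma is really just a bookkeeping statement that justifies treating $K_{p^m,i}$ as a $\varphi$-invariant subgroup in subsequent arguments (which will presumably be used in combination with Lemma \ref{lem:GbarK}\ref{item:K} to produce appropriate finite-index normal subgroups of $G$). The only subtle point worth flagging in the write-up is that the $\varphi$-invariance of $A_i$ is not by fiat but follows from the construction via the composition series of $\Q^n$ as a $\Q[\varphi_\Q]$-module.
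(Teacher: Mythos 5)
Your argument is correct and is essentially identical to the paper's proof: both take $a \in K_{p^m,i}$ with $\varphi^j(a)\in A_i+p^m A$, apply $\varphi$, and use $\varphi(A_i)\subseteq A_i$ and $\varphi(p^m A)\subseteq p^m A$ to conclude $\varphi^j(\varphi(a))\in A_i+p^m A$. Nothing further is needed.
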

\begin{proof}
    If $a\in K_{p^m,i}$ with $\varphi^j(a)\in A_i+p^m A$, then $\varphi^j(\varphi(a))\in\varphi(A_i)+\varphi(p^m A)< A_i+p^m A$, so $\varphi(a)\in K_{p^m,i}$.
\end{proof}

\begin{lem}\label{lem:existsbarG}
    There exists a finite-index normal subgroup $\bar{G}\triangleleft G$ with $\bar{G}\cap A=K_{p^m,i}$.
\end{lem}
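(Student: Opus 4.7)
The plan is to apply Lemma \ref{lem:GbarK}\ref{item:K} with $K := K_{p^m,i}$. Two of its three hypotheses have already been established: $K_{p^m,i}$ has finite index in $A$ (Lemma \ref{lem:ppower}), and $\varphi(K_{p^m,i}) < K_{p^m,i}$ (Lemma \ref{lem:Kp^mivarphiinv}). So the task reduces to producing an integer $r \geq 1$ with $a - \varphi^r(a) \in K_{p^m,i}$ for every $a \in A$.

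To find such an $r$, I would pass to the finite abelian quotient $Q := A/K_{p^m,i}$. Since $\varphi(K_{p^m,i}) \subseteq K_{p^m,i}$, the map $\varphi$ descends to a well-defined endomorphism $\tilde{\varphi}$ of $Q$, and the desired $r$ is exactly an exponent such that $\tilde{\varphi}^r$ is the identity on $Q$. Since $Q$ is finite, it suffices to show that $\tilde{\varphi}$ is injective; then $\tilde{\varphi}$ is an automorphism of a finite group and hence has finite multiplicative order $r$.

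The main step is therefore the injectivity of $\tilde{\varphi}$, and this is almost immediate from the very definition of $K_{p^m,i}$. Indeed, if $\varphi(a) \in K_{p^m,i}$, then by definition there is some $j \geq 0$ with
\[
\varphi^{j+1}(a) \,=\, \varphi^j(\varphi(a)) \,\in\, A_i + p^m A,
\]
which is precisely the condition that places $a$ itself into $K_{p^m,i}$. Once $r$ has been produced in this way, Lemma \ref{lem:GbarK}\ref{item:K} then packages everything into the finite-index normal subgroup $\bar{G} := \langle K_{p^m,i}, t^r \rangle$ of $G$, with $\bar{G} \cap A = K_{p^m,i}$. I do not anticipate a real obstacle here; the proof is essentially a matter of assembling the three preceding lemmas, together with the observation that the ``forward-iterate'' definition of $K_{p^m,i}$ automatically makes the induced map on $A/K_{p^m,i}$ injective.
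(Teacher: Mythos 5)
Your proof is correct and follows essentially the same route as the paper: both reduce to Lemma \ref{lem:GbarK}\ref{item:K} by showing that $\varphi$ induces an injective (hence, by finiteness of $A/K_{p^m,i}$, bijective) endomorphism of the quotient, with injectivity coming from the forward-iterate definition of $K_{p^m,i}$. Your write-up merely makes explicit the step the paper calls ``clear,'' namely that $\varphi^{-1}(K_{p^m,i})=K_{p^m,i}$.
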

\begin{proof}
Lemma \ref{lem:Kp^mivarphiinv} implies that $\varphi$ induces a well-defined endomorphism of $A/K_{p^m,i}$.
It is clear that the preimage $\varphi^{-1}(K_{p^m,i})$ is equal to $K_{p^m,i}$, so $\varphi$ induces an injective endomorphism of $A/K_{p^m,i}$.
    But $A/K_{p^m,i}$ is finite, so $\varphi$ in fact induces an automorphism, and there is some $r\geq1$ such that $\varphi^r$ induces the identity.
    This means that $a-\varphi^r(a)\in K_{p^m,i}$ for all $a\in A$.
    It then follows from Lemma \ref{lem:GbarK} that $\bar{G}=\langle K_{p^m,i}, t^r\rangle$ is a finite-index normal subgroup of $G$ with $\bar{G}\cap A=K_{p^m,i}$.
\end{proof}

We are now ready to state and prove the main theorem of the section.

\begin{thm}\label{thm:css}
    The following are equivalent.
    \begin{enumerate}
        \item\label{item:cycsubsep} $G$ is cyclic subgroup separable.
        \item\label{item:Kpi} There is no prime $p$ and $0\leq i<l$ with $A_{i+1}< K_{p,i}$.
        \item\label{item:equivx^ni} There is no prime $p$ and $1\leq i\leq l$ with $f_i(x)\equiv x^{n_i}$ {\rm{(mod $p$)}}.
    \end{enumerate}
\end{thm}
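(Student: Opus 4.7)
The plan is to establish $(2)\Leftrightarrow(3)$ by direct linear algebra, and then $(1)\Leftrightarrow(3)$ by analyzing how $\varphi$ acts on the $p$-adic completion $A_p:=A\otimes\Z_p$. The implication $(2)\Leftrightarrow(3)$ is quick: since $A/A_{i+1}$ embeds into the $\Q$-vector space $\Q^n/V_{i+1}$ it is torsion-free, hence $A_{i+1}$ is a direct summand of $A$ and $A_{i+1}\cap pA = pA_{i+1}$. The containment $A_{i+1}\leq K_{p,i}$ is then equivalent to every element of $(A_{i+1}/A_i)\otimes\mathbb{F}_p$ being eventually killed by powers of $\varphi_{i+1}\bmod p$, i.e.\ to $\varphi_{i+1}\bmod p$ being nilpotent, which happens iff $f_{i+1}(x)\equiv x^{n_{i+1}}\pmod p$.

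For $(3)\Rightarrow(1)$, by Lemma~\ref{lem:ordermultiple} it suffices to produce, for each $0\neq a\in A$, prime $p$, and $s\in\mathbb{N}$, a finite-index normal subgroup $\bar G\triangleleft G$ in which $a$ has order divisible by $p^s$. Applying Hensel's lemma to the factorization $f_\varphi\bmod p = x^{e_0}\bar h(x)$ with $\bar h(0)\neq 0$ yields a lift $f_\varphi=F_0F_1$ in $\Z_p[x]$ with $F_0\equiv x^{e_0}\pmod p$, and a corresponding primary decomposition $A_p=A_p^{(0)}\oplus A_p^{(1)}$ on which $\varphi$ is topologically nilpotent, respectively an automorphism. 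A Fitting-decomposition computation modulo $p^m$ identifies $A/K_{p^m,0}$ with $A_p^{(1)}/p^m A_p^{(1)}$, so the order of $a$ in this quotient is $p^{m-v_p(a^{(1)})}$, where $a^{(1)}$ denotes the projection of $a$ onto $A_p^{(1)}$. The main obstacle is the claim that $A\cap A_p^{(0)} = 0$ under $(3)$: if $0\neq a\in A\cap A_p^{(0)}$, its $\varphi$-minimal polynomial $h_a$---which by Gauss's lemma is a monic integer product $\prod_{i\in S}f_i$ for some nonempty $S$---would divide $F_0$ in $\Z_p[x]$, and reducing modulo $p$ would force $\bar f_i = x^{n_i}$ for each $i\in S$, contradicting $(3)$. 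Granting the crux, $a^{(1)}\neq 0$ has finite $v_p$, so choosing $m:=v_p(a^{(1)})+s$ and invoking Lemma~\ref{lem:existsbarG} produces the required $\bar G$.

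For $\neg(3)\Rightarrow\neg(1)$, I would take $j$ with $f_{j+1}(x)\equiv x^{n_{j+1}}\pmod p$ and write $f_{j+1}(x) = x^{n_{j+1}}+pg(x)$ with $g\in\Z[x]$. Since $f_{j+1}\mid f_\varphi$ in $\Q[x]$, the operator $f_{j+1}(\varphi)$ has nontrivial kernel in $\Q^n$, so after scaling I obtain $0\neq a\in A$ with $f_{j+1}(\varphi)(a)=0$. Define $a_0:=a$ and $a_{k+1}:=-g(\varphi)(a_k)$; since $g$ and $f_{j+1}$ commute as polynomials in $\varphi$, each $a_k$ remains in $\ker f_{j+1}(\varphi)|_A$, and the defining relation rearranges to $\varphi^{n_{j+1}}(a_k)=p\,a_{k+1}$. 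In any finite quotient $G/\bar G$, conjugation by $t^{n_{j+1}}$ preserves orders, giving $|a_k\bar G| = |p\,a_{k+1}\bar G| = |a_{k+1}\bar G|/\gcd(|a_{k+1}\bar G|,p)$. A short induction shows that if $p\mid |a\bar G|$ then $v_p(|a_k\bar G|) = v_p(|a\bar G|)+k$, which is unbounded, contradicting the finiteness of $G/\bar G$. Hence $p\nmid |a\bar G|$ for every finite-index normal $\bar G$, and by Lemma~\ref{lem:ordermultiple}, $G$ is not cyclic subgroup separable.
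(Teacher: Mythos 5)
Your proof is correct, but it reaches the hard equivalence $(1)\Leftrightarrow(3)$ by a genuinely different route. The paper routes everything through condition (2): it proves $(1)\Rightarrow(2)$ by an induction showing $A_{i+1}<K_{p^s,i}$ for all $s$ and deriving a contradiction with a carefully chosen subgroup $L$, and $(2)\Rightarrow(1)$ by showing that otherwise $A_{i+1}\cap\bigcap_m K_{p^m,i}$ would be a $\varphi$-invariant subgroup of infinite index strictly between $A_i$ and $A_{i+1}$, contradicting the composition series; the equivalence $(2)\Leftrightarrow(3)$ is then Cayley--Hamilton, essentially as in your first paragraph. You instead attack $(3)\Leftrightarrow(1)$ directly: for $(3)\Rightarrow(1)$ the Hensel/Fitting splitting $A_p=A_p^{(0)}\oplus A_p^{(1)}$ turns the quotient $A/K_{p^m,0}$ into (a subgroup of) $A_p^{(1)}/p^mA_p^{(1)}$ and yields an exact order formula $p^{m-v_p(a^{(1)})}$, so you only ever need the single family $K_{p^m,0}$ rather than all the $K_{p^m,i}$; the crux $A\cap A_p^{(0)}=0$ is handled correctly via Gauss's lemma and the factorization of the minimal polynomial of $a$ into the $f_i$. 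For $\neg(3)\Rightarrow\neg(1)$ your sequence $a_k$ with $\varphi^{n_{j+1}}(a_k)=p\,a_{k+1}$ is a clean generalization of the paper's eigenvalue obstruction (Lemma \ref{lem:eigenvalues'}) and of Remark \ref{rem:exhibitsubgroup}, giving an explicit element whose order in every finite quotient is prime to $p$. What each approach buys: the paper stays entirely elementary (finite quotients and the composition series, no completions), while yours is more quantitative and conceptual at the price of importing Hensel's lemma and the structure of $\Z_p[\varphi]$. Two small points worth making explicit if you write this up: the map $A/K_{p^m,0}\to A_p^{(1)}/p^mA_p^{(1)}$ is an embedding induced by projection (surjectivity is irrelevant to the order computation), and in the $(2)\Leftrightarrow(3)$ step the identity $A_{i+1}\cap(A_i+pA)=A_i+pA_{i+1}$ should be justified via the modular law together with $A_{i+1}\cap pA=pA_{i+1}$, which uses that $A/A_{i+1}$ is torsion-free.
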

\begin{proof}
    \ref{item:cycsubsep} $\Rightarrow$ \ref{item:Kpi}:
    Suppose for contradiction that \ref{item:cycsubsep} holds but not \ref{item:Kpi}, and fix $p,i$ with $A_{i+1}< K_{p,i}$.
    Take $a\in A_{i+1}-A_i$.
    By Lemma \ref{lem:ordermultiple}, there is a finite-index normal subgroup $\bar{G}\triangleleft G$ such that the order of $a$ in $G/\bar{G}$ is a multiple of $p$.
    Let $K=\bar{G}\cap A$.

    By choosing an appropriate basis of $K$, we can write $K$ as a direct sum $K=K_i\oplus K_i'$, where $K_i=K\cap A_i$ and $\langle a\rangle\cap K< K_i'$.
    This direct sum extends to a direct sum $A=A_i\oplus A_i'$, where $A_i'$ consists of those elements of $A$ with nonzero multiples in $K_i'$.
    As $\langle a\rangle < A_i'$ and $K+A_i=A_i\oplus K_i'$, the order of $a$ in $A/(K+A_i)$ is the same as the order of $a$ in $A/K$, which is a multiple of $p$ by assumption.
    By considering the structure of the finite abelian group $A/(K+A_i)$, it is not hard to show that there exists a subgroup $K+A_i< L< A$ with $a\notin L$ and $|A:L|=p^m$ for some $m\geq1$.
    It follows that $p^m A< L$ and 
    \begin{equation}\label{anotin}
    a\notin K+A_i+p^m A.
    \end{equation}

    Next, we claim that $A_{i+1}< K_{p^s,i}$ for all $s\geq1$.
    We proceed by induction on $s$.
    The case $s=1$ holds by assumption.
    For $s>1$ and $a_{i+1}\in A_{i+1}$, we have $\varphi^j(a_{i+1})\in A_i+p^{s-1}A$ for some $j\geq0$ by the induction hypothesis, say $\varphi^j(a_{i+1})=a_i+p^{s-1}a'$ for some $a_i\in A_i$ and $a'\in A$.
    But then $a'\in A_{i+1}< K_{p,i}$, so $\varphi^{j'}(a')\in A_i+pA$ for some $j'\geq0$.
    Therefore $\varphi^{j+j'}(a_{i+1})\in A_i+p^sA$, and $a_{i+1}\in K_{p^s,i}$ as required.

    The above claim with $s=m$ tells us that $\varphi^j(a)\in A_i+p^mA$ for some $j\geq0$.
    But Lemma \ref{lem:GbarK} implies that $a-\varphi^r(a)\in K$ for some $r\geq1$, and Remark \ref{rem:avarphi} yields $a-\varphi^{sr}(a)\in K$ for all $s\geq1$.
    Choosing $s$ such that $sr\geq j$, we see that 
    $$a\in K+\varphi^{sr}(a)< K+A_i+p^m A,$$
    contradicting (\ref{anotin}).

    \ref{item:Kpi} $\Rightarrow$ \ref{item:cycsubsep}:
    Suppose for contradiction that \ref{item:Kpi} holds but not \ref{item:cycsubsep}.
    By Lemma \ref{lem:ordermultiple}, there is $0\neq a\in A$, a prime $p$ and $s\geq1$ such that, for any finite-index normal subgroup $\bar{G}\triangleleft G$, the order of $a$ in $G/\bar{G}$ is not a multiple of $p^s$.
    Suppose $a\in A_{i+1}-A_i$ ($0\leq i<l$).
    By Lemma \ref{lem:existsbarG}, for each $m\geq1$ there exists a finite-index normal subgroup $\bar{G}_m\triangleleft G$ with $\bar{G}_m\cap A=K_{p^m,i}$.
    The quotient $A/K_{p^m,i}$ is a finite $p$-group by Lemma \ref{lem:ppower}, so the order of $a$ in $A/K_{p^m,i}$ is a power of $p$; but the order is not a multiple of $p^s$ by assumption, hence it divides $p^{s-1}$. In particular $p^sa\in K_{p^m,i}$.

    We know from \ref{item:Kpi} that there is $b\in A_{i+1}-K_{p,i}$, and so $p^{m-1}b\notin K_{p^m,i}$ by Lemma \ref{lem:Kp^mKp}. This implies that the order of $b$ in $A/K_{p^m,i}$ is a multiple of $p^m$, so in particular $|A_{i+1}:A_{i+1}\cap K_{p^m,i}|\geq p^m$. As a result,
    $$K_{i+1}:=A_{i+1}\cap\bigcap_{m\geq1}K_{p^m,i}$$
    is an infinite-index subgroup of $A_{i+1}$.
    Each of the $K_{p^m,i}$ is $\varphi$-invariant by Lemma \ref{lem:Kp^mivarphiinv} and contains $p^sa$, therefore $K_{i+1}$ is a $\varphi$-invariant infinite-index subgroup of $A_{i+1}$ that strictly contains $A_i$, which contradicts the construction of the groups $A_0< A_1 < \dots < A_l$.

    \ref{item:Kpi} $\Rightarrow$ \ref{item:equivx^ni}:
    Let $p$ be a prime and let $1\leq i\leq l$.
    We defined $f_i(x)$ to be the characteristic polynomial of the endomorphism $\varphi_i$ of $A_i/A_{i-1}$, so the reduction of $f_i(x)$ (mod $p$) is the characteristic polynomial of the induced $\Z/p\Z$-linear map $A_i/(A_{i-1}+pA_i)\to A_i/(A_{i-1}+pA_i)$.
    If $f_i(x)\equiv x^{n_i}$ {\rm{(mod $p$)}} then the Cayley--Hamilton Theorem implies that $\varphi^{n_i}(A_i)< A_{i-1}+pA_i$, which means that $A_i< K_{p,i-1}$, contradicting \ref{item:Kpi}.

    \ref{item:equivx^ni} $\Rightarrow$ \ref{item:Kpi}:
    Suppose for contradiction that \ref{item:equivx^ni} holds but not \ref{item:Kpi}.
    Let $p$ be a prime and let $0\leq i< l$ with $A_{i+1}< K_{p,i}$.
    Then for every $a\in A_{i+1}$ we have $\varphi^j(a)\in A_i+pA_{i+1}$ for some $j\geq0$. Since $A_{i+1}$ is finitely generated, there exists $j\geq0$ such that $\varphi^j(A_{i+1})< A_i+pA_{i+1}$.
    Let $\bar{\varphi}_{i+1}$ denote the $\Z/p\Z$-linear map $A_{i+1}/(A_i+pA_{i+1})\to A_{i+1}/(A_i+pA_{i+1})$ induced by $\varphi$.
    Since $\bar{\varphi}_{i+1}^j$ is the zero map, the minimal polynomial of $\bar{\varphi}_{i+1}$ must be a power of $x$.
    Every irreducible factor of the characteristic polynomial divides the minimal polynomial, so the characteristic polynomial of $\bar{\varphi}_{i+1}$ is also a power of $x$, i.e. $f_{i+1}(x)\equiv x^{n_{i+1}}$ {\rm{(mod $p$)}}, which contradicts \ref{item:equivx^ni}.    
\end{proof}

\begin{rem}\label{rem:exhibitsubgroup}
    If $f_i(x)\equiv x^{n_i}$ {\rm{(mod $p$)}} then there is no $a\in A_i-A_{i-1}$ with $\langle pa\rangle$ separable in $G$.
    Indeed, if $\langle pa\rangle$ was separable for some $a\in A_i-A_{i-1}$, then there would exist a finite-index normal subgroup $\bar{G}\triangleleft G$ such that the order of $a$ in $G/\bar{G}$ is a multiple of $p$ (see the proof of Lemma \ref{lem:ordermult}). Running the proof of \ref{item:Kpi} $\Rightarrow$ \ref{item:equivx^ni} above for our particular choices of $i$ and $p$ would tell us that $A_i< K_{p,i-1}$. Then running the proof of \ref{item:cycsubsep} $\Rightarrow$ \ref{item:Kpi} for our particular choice of $a$ and $\bar{G}$ (and the index $i$ shifted by 1) would yield a contradiction. 
\end{rem}

\begin{rem}
    Note that any integer eigenvalue $\lambda$ of $\varphi$ will give rise to a factor $f_i(x)=x-\lambda$ of the characteristic polynomial.
    If $|\lambda|>1$ then $\lambda$ has a prime factor $p$, and $f_i(x)\equiv x$ {\rm{(mod $p$)}}, so Theorem \ref{thm:css} implies that $G$ is not cyclic subgroup separable.
    This agrees with Lemma \ref{lem:eigenvalues'}.
\end{rem}

\begin{rem}\label{rem:varphisurj}
    If $\varphi \colon A\to A$ is surjective, then $d=|A:\varphi(A)|=|\det(\varphi)|=1$. As a result, every factor $f_i(x)$ of the characteristic polynomial has constant term equal to $\pm1$, so Theorem \ref{thm:css} implies that $G$ is cyclic subgroup separable. This is consistent with Theorem \ref{thm:subgpsep}, which tells us that $G$ is actually subgroup separable in this case.
\end{rem}

\begin{cor}\label{cor:n=2}
    Suppose $A\cong\Z^2$. Then $G$ is cyclic subgroup separable if and only if $\varphi$ has no integer eigenvalue $\lambda$ with $|\lambda|>1$ and $\tr(\varphi)$ is coprime to $d$.
\end{cor}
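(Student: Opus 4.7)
The plan is to apply Theorem \ref{thm:css} directly, after observing that for $A\cong\Z^2$ the characteristic polynomial $f_\varphi(x)=x^2-\tr(\varphi)x+\det(\varphi)$ has degree $2$, so it either is irreducible over $\Q$ or factors as $(x-\lambda_1)(x-\lambda_2)$ with $\lambda_1,\lambda_2\in\Z$ (any rational root is an algebraic integer, hence an integer; and we allow $\lambda_1=\lambda_2$). In both cases $\lambda_i\neq 0$ since $\varphi$ is a monomorphism, so $\det(\varphi)\neq 0$. These two cases correspond exactly to the two halves of the claimed criterion.

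In the irreducible case, we have $l=1$, $f_1(x)=f_\varphi(x)$, and $n_1=2$. The condition $f_1(x)\equiv x^2\pmod{p}$ is equivalent to $p\mid\tr(\varphi)$ and $p\mid\det(\varphi)$, and the existence of such a prime is equivalent to $\gcd(\tr(\varphi),d)>1$. Theorem \ref{thm:css} then states that $G$ is cyclic subgroup separable if and only if $\tr(\varphi)$ is coprime to $d$. Since $f_\varphi$ being irreducible means $\varphi$ has no integer eigenvalues at all, the first condition in the corollary is vacuously true, so the statement reduces precisely to the coprimality condition.

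In the reducible case, $l=2$ with $f_i(x)=x-\lambda_i$ and $n_i=1$ for $i=1,2$. The forbidden condition $f_i(x)\equiv x\pmod{p}$ becomes $p\mid\lambda_i$, which has a solution in $p$ if and only if $|\lambda_i|>1$. Theorem \ref{thm:css} therefore gives that $G$ is cyclic subgroup separable if and only if $\lambda_1,\lambda_2\in\{-1,+1\}$, equivalently, $\varphi$ has no integer eigenvalue of absolute value greater than $1$. In this subcase one checks that $d=|\lambda_1\lambda_2|=1$, so the coprimality condition on $\tr(\varphi)$ is automatic; and conversely if some $|\lambda_i|>1$, then the first clause of the corollary already fails.

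Combining the two cases finishes the proof. The only mildly subtle point is to make sure the two halves of the corollary's condition partition responsibility cleanly: the coprimality clause does the work precisely when $f_\varphi$ is $\Q$-irreducible (where the eigenvalue clause is vacuous), and the eigenvalue clause does the work precisely when $f_\varphi$ factors over $\Z$ (where, once that clause holds, $d=1$ forces the coprimality clause). No substantial obstacle arises beyond bookkeeping, since everything follows mechanically from Theorem \ref{thm:css} once the two factorization patterns are listed.
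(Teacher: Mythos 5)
Your proposal is correct and takes essentially the same route as the paper: both deduce the corollary from Theorem \ref{thm:css} applied to the quadratic characteristic polynomial $f_\varphi(x)=x^2-\tr(\varphi)x+\det(\varphi)$, splitting according to whether it is irreducible over $\Q$ or factors into integer linear factors. The only cosmetic difference is that the paper shortcuts two subcases by citing Lemma \ref{lem:eigenvalues'} (integer eigenvalue of modulus greater than $1$) and Remark \ref{rem:varphisurj} (both eigenvalues of modulus $1$, so $d=1$), whereas you verify the criterion of Theorem \ref{thm:css} directly in those cases, exactly as anticipated by the remarks following that theorem.
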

\begin{proof}
Since $\varphi$ corresponds to a $2\times2$ matrix, the characteristic equation of $\varphi$ takes the following form:
\begin{equation}\label{chareq}
    f_\varphi(x)=x^2-\tr(\varphi) x+\det(\varphi)=0.
\end{equation}
    If $\varphi$ has an integer eigenvalue $\lambda$ with $|\lambda|>1$ then Lemma \ref{lem:eigenvalues'} implies that $G$ is not cyclic subgroup separable.
    If $f_\varphi(x)$ is irreducible over $\Q$, then it follows from Theorem \ref{thm:css} and (\ref{chareq}) that $G$ is cyclic subgroup separable if and only if $\tr(\varphi)$ is coprime to $d=|\det(\varphi)|$.
    The remaining case is where $f_\varphi(x)=(x-\lambda_1)(x-\lambda_2)$ with $|\lambda_1|=|\lambda_2|=1$, so $d=1$ and $G$ is cyclic subgroup separable by Remark \ref{rem:varphisurj}.
\end{proof}

We finish with some examples for the group $G$, in each case determining whether it is cyclic subgroup separable.

\begin{exmp}\label{firstexample}
Let $G_1=\langle a,b,t \mid [a,b]=1, tat^{-1}=b^{-2}, tbt^{-1}= a b^{-3} \rangle$. 
This corresponds to $A=\langle a,b\rangle$, $\varphi(a)=b^{-2}$ and $\varphi(b)=ab^{-3}$.
Then the matrix corresponding to $\varphi$ (with respect to the basis $\{a,b\}$ of $A$) is
\begin{equation*}
C_1=
\begin{pmatrix}
0 & 1 \\
-2 & -3
\end{pmatrix}
.
\end{equation*}
This matrix has eigenvalues $\lambda_1= -1$ and $\lambda_2= -2$. The vectors $v_1= (1,-1)$ and $v_2= (1,-2)$ are eigenvectors associated to $\lambda_1$ and $\lambda_2$, respectively. From Lemma \ref{lem:eigenvalues'} we get that $\langle (ab^{-2})^2 \rangle$ is not separable from $ab^{-2}$, in particular $G_1$ is not cyclic subgroup separable.
\end{exmp}

\begin{exmp}
Let $G_2=\langle a,b,t \mid [a,b]=1, tat^{-1}= ab^{-1}, tbt^{-1}=ab \rangle$. This corresponds to $A=\langle a,b\rangle$, $\varphi(a)=ab^{-1}$ and $\varphi(b)=ab$. The integer matrix corresponding to $\varphi$ (with respect to the basis $\{a,b\}$ of $A$) is
\begin{equation*}
C_2=
\begin{pmatrix}
1 & 1 \\
-1 & 1
\end{pmatrix}
,
\end{equation*}
and this matrix does not even have real eigenvalues. The determinant and the trace of the matrix are both $2$, so from Corollary \ref{cor:n=2} we deduce that $G_2$ is not cyclic subgroup separable.

\end{exmp}

\begin{exmp}
Let $G_3=\langle a,b,t \mid [a,b]=1, tat^{-1}= ab^{2}, tbt^{-1}=a^2b^2 \rangle$. This corresponds to $A=\langle a,b\rangle$, $\varphi(a)=ab^{2}$ and $\varphi(b)=a^2b^2$. The integer matrix corresponding to $\varphi$ (with respect to the basis $\{a,b\}$ of $A$) is
\begin{equation*}
C_3=
\begin{pmatrix}
1 & 2 \\
2 & 2
\end{pmatrix}
,
\end{equation*}
which does not have integer eigenvalues. Moreover, the trace is $3$ and the determinant is $-2$, which are coprime, so $G_3$ is cyclic subgroup separable by Corollary \ref{cor:n=2}.
\end{exmp}

\begin{exmp}
    Let $G_4=\langle a,b,c,t \mid [a,b]=[b,c]=[c,a]=1, tat^{-1}= c^5, tbt^{-1}=a, tct^{-1}=abc \rangle$.
    This corresponds to $A=\langle a,b,c\rangle$, $\varphi(a)=c^5$, $\varphi(b)=a$ and $\varphi(c)=abc$. The integer matrix corresponding to $\varphi$ (with respect to the basis $\{a,b,c\}$ of $A$) is
\begin{equation*}
C_4=
\begin{pmatrix}
0 & 1 & 1 \\
0 & 0 & 1 \\
5 & 0 & 1 
\end{pmatrix}
.
\end{equation*}
The characteristic polynomial is
$$f_\varphi(x)=x^3-x^2-5x-5.$$
It is irreducible over $\Q$ because the substitution $y=x+1$ transforms it to
$$f_\varphi(y)=y^3-4y^2-2,$$
which satisfies Eisenstein's Criterion. Clearly $f_\varphi(x)$ does not reduce to $x^3$ modulo a prime, so we deduce from Theorem \ref{thm:css} that $G_4$ is cyclic subgroup separable.
\end{exmp}

\begin{exmp}\label{lastexample}
Let 
$$G_5= \langle a,b,c,t \mid [a,b]=[b,c]=[c,a]=1, t a t^{-1}= ab^2, tb t^{-1}= a^2 b^2, tct^{-1}=abc \rangle.$$
This corresponds to $A= \langle a,b,c \rangle, \varphi(a)= ab^2, \varphi(b)= a^2 b^2, \varphi(c)= abc$ . The integer matrix corresponding to $\varphi$ (with respect to the basis $\{a,b,c\}$ of $A$) is
\begin{equation*}
C_5=
\begin{pmatrix}
1 & 2 & 1 \\
2 & 2 & 1 \\
0 & 0 & 1
\end{pmatrix}
.
\end{equation*}
The characteristic polynomial factors into irreducible polynomials as follows
$$f_\varphi(x)=(x^2-3x-2)(x-1).$$
These factors satisfy Theorem \ref{thm:css}\ref{item:equivx^ni}, thus $G_5$ is cyclic subgroup separable.
\end{exmp}

\bibliographystyle{alpha}
\bibliography{ref}

\end{document}